\documentclass[11pt]{article}
\usepackage[a4paper,margin=1in]{geometry}
\usepackage[utf8]{inputenc}
\usepackage[T1]{fontenc}
\usepackage{amsmath,amssymb,amsthm}
\usepackage{graphicx}
\usepackage{booktabs}
\usepackage{enumitem}
\usepackage{amsthm}
\usepackage[section]{placeins}   
\usepackage{float}
\usepackage[colorlinks=true,linkcolor=blue,citecolor=blue,urlcolor=blue]{hyperref}

\newcommand{\Q}{\mathbb{Q}}
\newcommand{\R}{\mathbb{R}}
\newcommand{\bx}{\mathbin{\square}}
\newcommand{\cro}{\operatorname{cr}}\newcommand{\udcr}{\operatorname{udcr}}
\theoremstyle{plain}\newtheorem{theorem}{Theorem}\newtheorem{proposition}{Proposition}
\newtheorem{lemma}{Lemma}
\newtheorem{corollary}{Corollary}
\newtheorem{conjecture}{Conjecture}
\theoremstyle{definition}\newtheorem{remark}{Remark}

\title{The crossing number and the unit-distance crossing number of the Hamming graphs
$H(d,3)=K_3^{\bx d}$, and their realizations over many coordinate fields}
\author{
  Haroldo Costa Silva Filho\thanks{\texttt{haroldosfilho@gmail.com}}\\
  \small PGCComp (Ci\^encias Computacionais e Modelagem Matem\'atica),\\
  \small Universidade do Estado do Rio de Janeiro (UERJ), Rio de Janeiro, Brazil
}
\date{\today}

\begin{document}
\maketitle

\begin{abstract}
The Hamming graph $H(d,3)=K_3^{\bx d}$ ($n=3^d$ vertices) is a purely combinatorial object:
the graph of single-symbol errors of ternary codes. We ask of it a stubbornly geometric
question --- can it be drawn in the plane with \emph{every} edge exactly one unit long and no
two non-adjacent vertices a unit apart? It can, for every $d$; and this innocent constraint opens
onto two problems that look unrelated but turn out to be one.

The first is arithmetic: where does such a graph \emph{live}? Once its edges are fixed to unit
length one expects a definite home --- a single coordinate field in which it is realized, as a
rigid graph would have. We find the opposite. Built as a Minkowski sum of unit triangles (a
construction that keeps the whole family in the plane only for base $3$), each $H(d,3)$ carries a
hidden \emph{flexibility}: not one faithful realization but a whole continuum of them. Sliding
along this flex walks its coordinates up the classical constructibility ladder ---
ruler-and-compass, origami, and beyond --- so the same graph is realizable over many number
fields at once; and yet we prove that \emph{almost every} realization is transcendental, lying in
no number field whatsoever. The graph has no fixed algebraic address: the tidy Galois picture one
expects is only a measure-zero shadow of a vast transcendental continuum --- in keeping with the
$\exists\R$-hardness of recognizing unit-distance graphs.

The second is combinatorial: how crowded must such a unit drawing be? We separate the ordinary
crossing number from a \emph{unit-distance crossing number} --- the fewest crossings over all
faithful unit drawings, a variant absent from the standard survey --- and a single recursive
method, in which the graph is three shrunken copies of itself plus a connecting bundle, controls
both. The ordinary one we pin down exactly; the unit one we trap between close bounds, the unit
constraint provably forcing strictly more crowding, with one sharp gap left as the central open
problem.

The bridge is the flex itself: the same freedom that lets the graph roam coordinate fields is the
freedom over which crossings are minimized, and the least-crossing drawing we find is not a
generic point but a \emph{constructible} (origami) one --- so ``which field'' and ``how few
crossings'' are two readings of one geometry. The account is didactic and self-contained, and
every claim is verified in exact arithmetic with reproducible code and figures.
\end{abstract}

\medskip
\noindent\textbf{Keywords.} unit-distance graph; Hamming graph; crossing number; unit-distance
crossing number; graph rigidity; flexibility; Minkowski sum; constructibility; origami; Galois
theory; Cartesian product; bisection width.

\smallskip
\noindent\textbf{MSC 2020.} 05C10, 05C62, 52C10, 05C76, 05C50.

\section{The graph}
The Hamming graph $H(d,q)$ has vertex set $\{1,\dots,q\}^d$, two words being
adjacent when they differ in exactly one coordinate (Hamming distance $1$);
equivalently $H(d,q)=K_q^{\,\square d}$. For $d=q=3$,
\[
  H(3,3)=K_3\bx K_3\bx K_3,\qquad |V|=27,\quad \text{$6$-regular},\quad |E|=81 .
\]
It is the one-symbol-error graph of length-$3$ ternary codes. Its standard
straight-line drawing (Figure~\ref{fig:planar}) is correct as a graph but has
edges of many different lengths; the question we address is whether $H(3,3)$ can be drawn as a
\emph{faithful} unit-distance graph (UDG): an \emph{injective} map $p:V\to\R^2$ with
$\|p(u)-p(v)\|=1$ if and only if $uv$ is an edge --- that is, all edges of length $1$, all
vertices distinct, and no non-adjacent pair at distance $1$ --- a class whose extremal questions
remain very active~\cite{Alon2026}.

\begin{figure}[t]\centering
  \includegraphics[width=.9\linewidth]{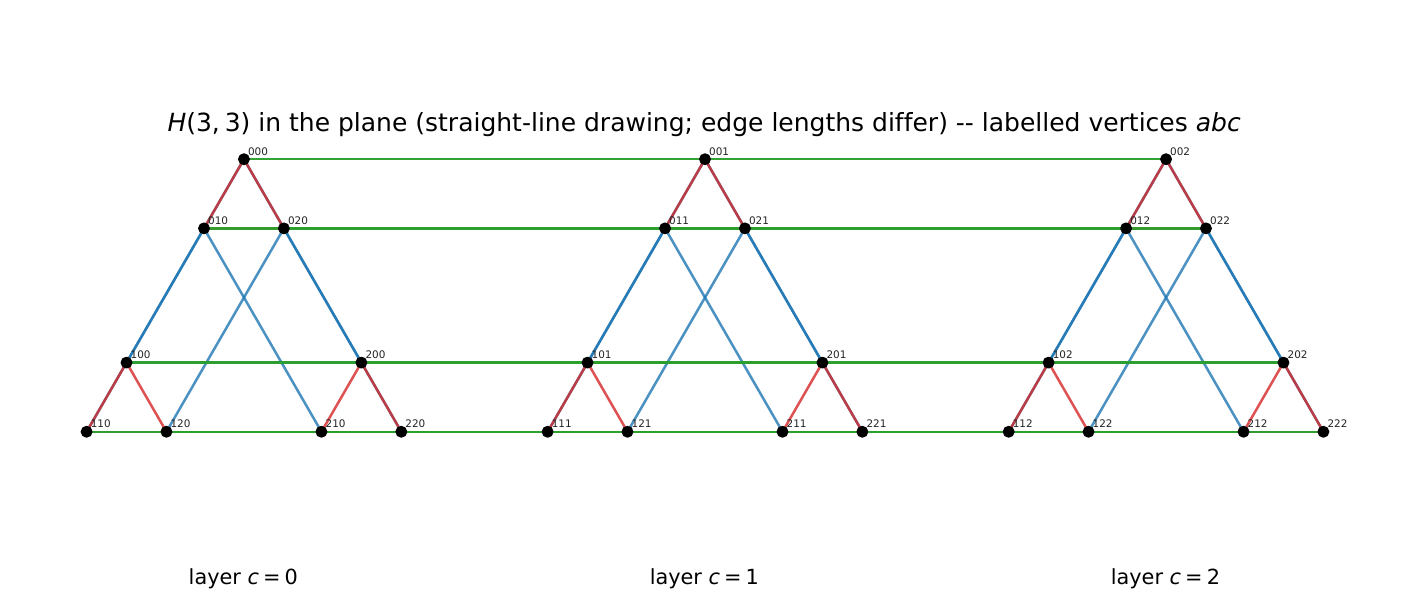}
  \caption{A straight-line drawing of $H(3,3)$ in the plane; colours indicate
  which coordinate changes. Edge lengths differ: this is not yet a unit-distance
  realization.}\label{fig:planar}
\end{figure}

\section{Hamming graphs, codes, and isomorphisms}
\emph{Definitions, tied together.} Fix an alphabet $\{0,\dots,q-1\}$ and word length $d$.
The \emph{Hamming distance} $d_H(x,y)$ between two words is the number of positions in which
they differ, and the \emph{Hamming weight} $\mathrm{wt}(x)=d_H(x,0)$ counts nonzero symbols.
The \emph{Hamming graph} $H(d,q)$ has the $q^{d}$ words as vertices and joins two words iff
$d_H=1$; thus graph distance equals Hamming distance, and a single-symbol error is exactly a
step along an edge. A \emph{code} $C\subseteq\{0,\dots,q-1\}^{d}$ with minimum distance
$\delta$ is an independent set of the $(\delta-1)$-th graph power, and it corrects
$\lfloor(\delta-1)/2\rfloor$ errors; so packings and independent sets in $H(d,q)$ \emph{are}
error-correcting codes, and the Hamming (sphere-packing) bound is a ball-packing statement in
this metric. This is the single thread linking every notion in the paper: the alphabet gives
$q$, hence $\operatorname{edim}=q-1$ (Theorem~\ref{thm:edim}); the word length gives $d$,
hence the flexibility dimension $d-1$ (Theorem~\ref{prop:flex}); and drawing the code space in
its Euclidean dimension is what makes the two crossing numbers meaningful.

The Hamming graph is a standard object in coding theory and in the theory of association
schemes (the Hamming scheme $H(d,q)$), and the family is central across computing,
communication, and cryptography. In \emph{communication}, $H(d,q)$ is the state space of
$q$-ary block codes: minimum distance, decoding radius, and the Singleton and sphere-packing
bounds are all read off its metric, and channel errors are walks on it. In \emph{computing},
the graphs $H(d,2)=Q_d$ and their $q$-ary analogues (the clique-based ``$K$-cubes'' of
LaForge~\cite{LaForge}) are classical interconnection-network and VLSI-layout topologies that
minimise connection cost and packet latency and admit optimal local routing, and where the
bisection width (used in Theorem~\ref{thm:tight}) governs communication cost. In \emph{cryptography}, Hamming weight and distance measure the
nonlinearity and correlation of Boolean functions and S-boxes: differential and linear
cryptanalysis, the covering radius, and side-channel (Hamming-weight) leakage models all live
on $H(d,q)$. Placing this code space in its true Euclidean dimension, and measuring how
crowded that placement must be, is the geometric counterpart of these questions.

The small members carry a web of alternative identities, which is part of why
labelling the graph correctly matters:
\begin{itemize}\itemsep2pt
\item $H(1,q)=K_q$ (a single symbol; the complete graph).
\item $H(d,2)=Q_d$, the $d$-cube; $H(2,2)=C_4$, $H(3,2)=Q_3$ the cube graph.
\item $H(2,q)=K_q\bx K_q$ is the $q\times q$ \emph{rook's graph} (rook moves on a
chessboard); it is a strongly regular \emph{Latin-square graph}, and for $q=3$ it is
moreover the \emph{Paley graph} of order $9$ and the $3\times3$ rook / $K_{3,3}$-line
graph. Its complement, spectrum $\{4,1^4,(-2)^4\}$, and $S_3\wr S_2$ automorphisms are
all classical.
\item $H(3,3)=K_3^{\bx3}$ is the \emph{enneagon-twist} graph (three unit star-enneagons
$\{9/1\},\{9/2\},\{9/4\}$ with a $20^\circ$ twist realise it), with spectrum
$\{6,3^6,0^{12},(-3)^8\}$ and $27$ triangles.
\end{itemize}
All of these are unit-distance graphs, since the Cartesian product of unit-distance
graphs is unit-distance (Minkowski sum of the factor realisations); for base $q=3$ the product
stays in the plane, and there its two crossing
numbers, its flexibility, and its arithmetic become computable.

\section{The Minkowski-sum realization}
Let $u,v,w$ be three unit equilateral triangles (circumradius $R=1/\sqrt3$) with
vertices labelled $0,1,2$ (Figure~\ref{fig:tri}). Place vertex $abc$ at the
Minkowski sum
\begin{equation}\label{eq:mink}
  P(a,b,c)=u_a+v_b+w_c .
\end{equation}

\begin{proposition}\label{prop:mink}
For any orientations of $u,v,w$, the map \eqref{eq:mink} sends every edge of
$H(3,3)$ to a segment of length $1$.
\end{proposition}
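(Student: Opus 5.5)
The plan is a direct unfolding of the definitions, exploiting the fact that an edge of $H(3,3)$ changes exactly one coordinate. Take an edge joining $abc$ to a word that differs in exactly one position. By the symmetry of the construction \eqref{eq:mink} in the three factors $u,v,w$, it suffices to treat the case where the first coordinate changes, i.e.\ the edge $\{abc,a'bc\}$ with $a\neq a'$. The other two cases are identical after relabelling which triangle plays the role of $u$.

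The key step is the cancellation of the fixed summands:
\[
P(a,b,c)-P(a',b,c)=(u_a+v_b+w_c)-(u_{a'}+v_b+w_c)=u_a-u_{a'} .
\]
So the image of the edge is a translate (by $v_b+w_c$) of the segment joining two distinct vertices $u_a,u_{a'}$ of the triangle $u$. Since $u$ is a unit equilateral triangle, any two of its distinct vertices are at distance exactly $1$, so $\|u_a-u_{a'}\|=1$. Here I only use that $u$ has all three sides of length $1$: the circumradius $1/\sqrt3$ stated in the construction is consistent with this, since side $=\sqrt3\,R$.

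It remains to note that nowhere was the orientation of $u$, $v$ or $w$ used. Rotating, translating or reflecting a triangle preserves its side lengths, and the cancellation above involves no relation between different triangles. Hence the conclusion holds for every choice of orientations, which is exactly the claim.

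There is no genuine obstacle. The only point needing care is that the statement concerns edges only: it asserts nothing about injectivity or about non-edges avoiding distance $1$, which is the faithfulness question handled later. I would flag this explicitly so the proposition is not misread as already giving a faithful UDG. The same argument shows more generally that the Minkowski sum of unit-distance realizations of graphs $G_i$ realizes $\mathop{\square}_i G_i$ with unit edges, which is the remark made in the preceding section.
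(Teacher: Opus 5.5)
Your proof is correct and is essentially the paper's own argument: an edge changes exactly one coordinate, the fixed summands cancel, and $u_a-u_{a'}$ is a side of the unit triangle $u$, whatever its orientation. Your extra remarks are accurate: the proposition does not claim faithfulness, and the same argument gives unit edges for Minkowski sums of unit-distance realizations of Cartesian products in general.
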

\begin{proof}
An edge changes exactly one coordinate, say $a\to a'$. Then
$P(a',b,c)-P(a,b,c)=u_{a'}-u_a$, a side of the unit triangle $u$, of length $1$;
the cases of $b$ and $c$ are identical.
\end{proof}

\begin{figure}[t]\centering
  \includegraphics[width=.9\linewidth]{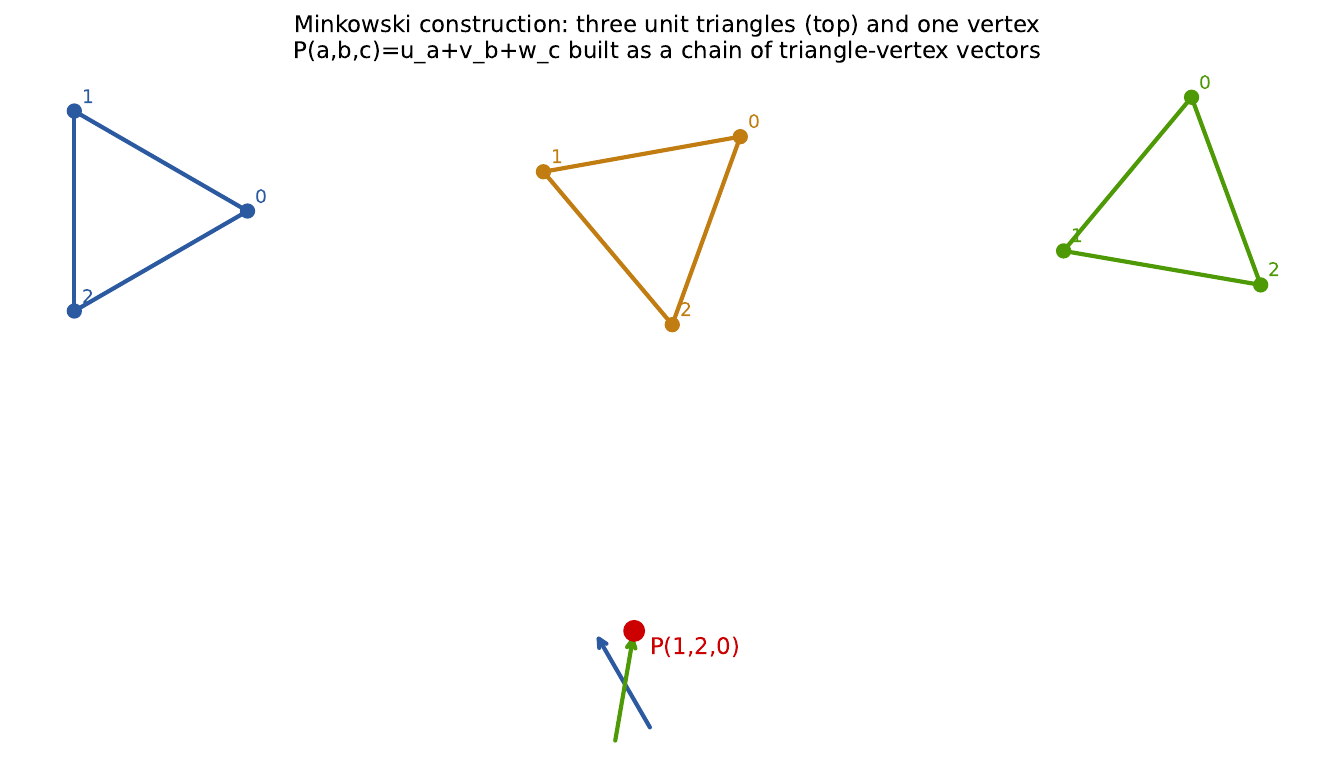}
  \caption{The Minkowski construction. Top: three unit triangles $u,v,w$ with vertices
  labelled $0,1,2$. Bottom: the vertex $P(1,2,0)=u_1+v_2+w_0$ assembled as a chain of
  triangle-vertex vectors from the origin; an edge changes one coordinate and so is a
  single triangle side, of length one.}\label{fig:construction}
\end{figure}

Thus all $81$ edges are unit \emph{at once}; what remains is faithfulness. Since
the edge lengths depend only on the \emph{shapes} of the triangles, rotating $v$
by an angle $\theta$ and $w$ by another angle (keeping $u$ fixed) preserves all
$81$ unit lengths: $H(3,3)$ admits a genuine two-parameter \emph{flex}
(Figure~\ref{fig:flex}). Its realization variety --- the zero set of the
polynomials $\|P(u)-P(v)\|^2-1$ --- is therefore positive-dimensional. This is
the structural reason the coordinate field is not pinned: a rigid UDG has an
isolated realization and a fixed field, whereas a flexible UDG can be pushed
through different algebraic extensions.

\begin{figure}[t]\centering
  \includegraphics[width=.8\linewidth]{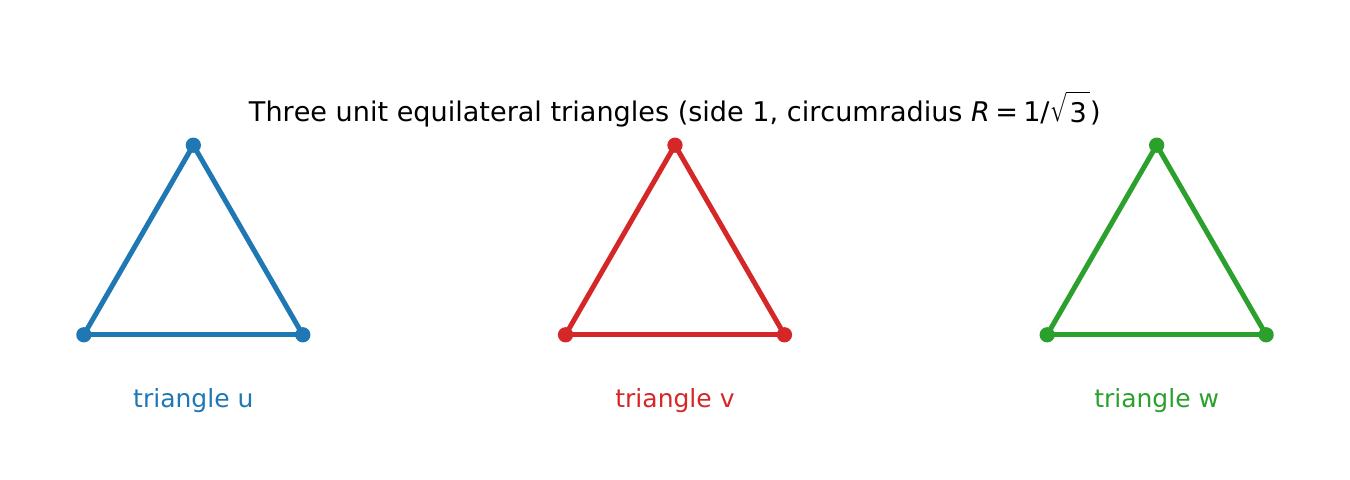}\\[4pt]
  \includegraphics[width=.42\linewidth]{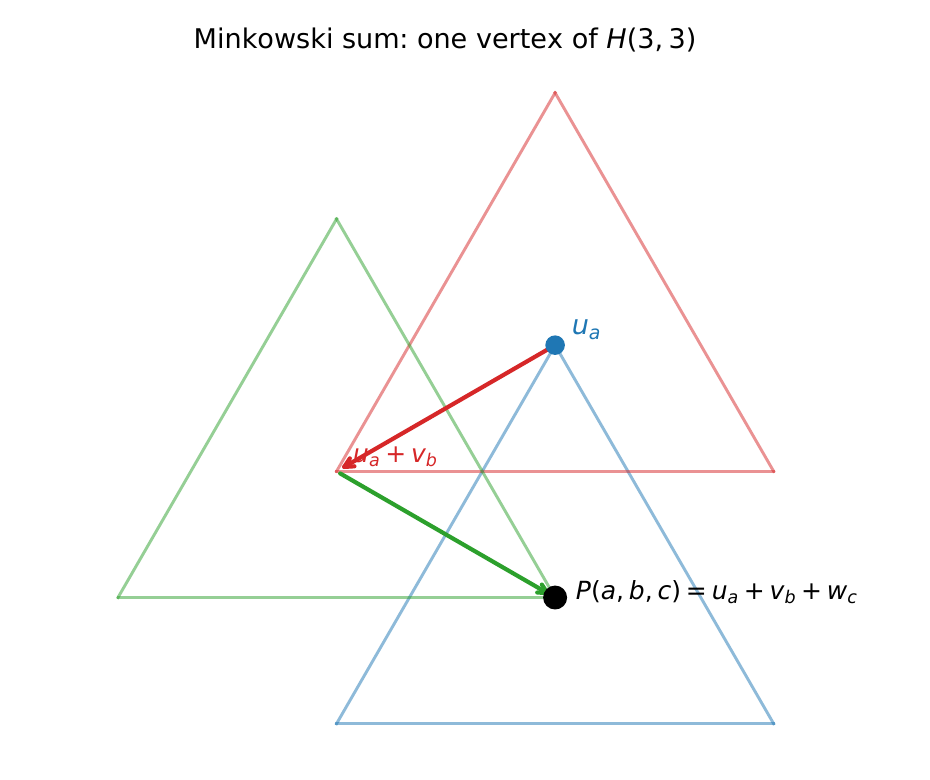}\hfill
  \includegraphics[width=.52\linewidth]{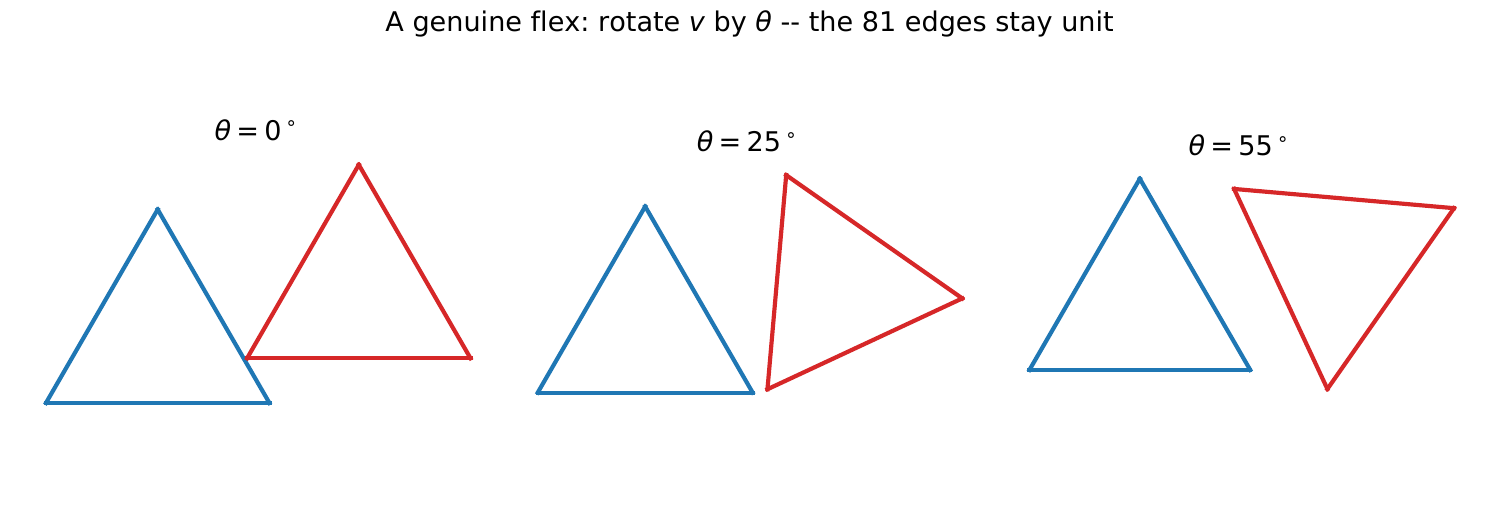}
  \caption{Top: the three unit triangles. Bottom left: one vertex
  $P(a,b,c)=u_a+v_b+w_c$. Bottom right: rotating $v$ gives a flex --- the 81
  edges stay unit throughout.}\label{fig:tri}\label{fig:flex}
\end{figure}

\section{Coordinate fields and the constructibility hierarchy}
Each edge imposes a polynomial equation
$(x_u-x_v)^2+(y_u-y_v)^2=1$ with rational coefficients, so the whole realization
system is defined over $\Q$. This does \emph{not} force the coordinates to be
algebraic: the system is under-determined --- the flex makes its solution set
positive-dimensional --- and, as Proposition~\ref{prop:faithful-transc} shows, the
generic faithful realization is in fact transcendental. When a realization \emph{is}
algebraic, however (a dense, measure-zero web of them is, Section~\ref{sec:faithful}),
its coordinates lie in a finite extension $K/\Q$, and the degree $[K:\Q]$ measures the
arithmetic difficulty. It is these algebraic realizations that the classical hierarchy
classifies:
\[
  \Q\ \subset\ \underbrace{\text{ruler\&compass}}_{[K:\Q]=2^a}\ \subset\
  \underbrace{\text{origami}}_{2^a3^b}\ \subset\
  \underbrace{\text{exotic}}_{\text{a prime }\ge5\text{ divides }[K:\Q]} .
\]
By Proposition~\ref{prop:mink} the whole arithmetic is concentrated in a single
\emph{base polynomial}: the minimal polynomial of $\cos\alpha$ at the base angle
$\alpha$. We now exhibit one realization in each tier (all faithfulness and
ghost counts verified in exact arithmetic), and one non-faithful degenerate case
for contrast.

\paragraph{$\Q(\sqrt3)$ (not faithful).}
Base angles $0^\circ,30^\circ,90^\circ$ give coordinates in $\Q(\sqrt3)$
(degree $2$); the $81$ edges are unit, but the vertices collide ($21$ distinct
points) and $36$ non-adjacent pairs fall at distance $1$. A cheap field, but not
a faithful UDG.

\paragraph{$\Q(\sqrt2,\sqrt3)$ (ruler and compass).}
Base angles $0^\circ,45^\circ,90^\circ$; $\cos45^\circ=\tfrac{\sqrt2}{2}$. The
field is the tower of quadratics $\Q\subset\Q(\sqrt2)\subset\Q(\sqrt2,\sqrt3)$,
degree $4=2^2$; the base polynomial is $y^2-2$ with $y=2\cos45^\circ$. Exact
check: $81$ unit edges, $0$ ghosts. The numbers $\sqrt2,\sqrt3$ and the angles
$45^\circ,90^\circ$ are built by ruler and compass from a unit triangle and a
unit square (Figure~\ref{fig:tiers}, left).

\paragraph{$\Q(\sin20^\circ)$ (origami), and a degree-$6$ polynomial.}
Base angles $0^\circ,20^\circ,40^\circ$. From the triple-angle identity
$\cos3\alpha=4\cos^3\alpha-3\cos\alpha$ with $3\alpha=60^\circ$,
\begin{equation}\label{eq:cubic}
  4\cos^3 20^\circ-3\cos20^\circ=\tfrac12\ \Longrightarrow\
  \boxed{\,8x^3-6x-1=0\,}\quad(x=\cos20^\circ),
\end{equation}
irreducible over $\Q$, so $[\Q(\cos20^\circ):\Q]=3$: not ruler-and-compass, but
the trisection of $60^\circ$, realized by Beloch's fold $O6$ (the fold that maps
two points onto two lines simultaneously; equivalently a common tangent of two
parabolas, whose slope solves a cubic). Adjoining $\sqrt3$ gives the coordinate
field $\Q(\sin20^\circ)$, and $\sin20^\circ$ itself satisfies the irreducible
\emph{degree-$6$} polynomial
\begin{equation}\label{eq:deg6}
  64x^6-96x^4+36x^2-3=0\qquad(x=\sin20^\circ),
\end{equation}
equivalently $y^6-6y^4+9y^2-3=0$ for $y=2\sin20^\circ$. Thus
$[\Q(\sin20^\circ):\Q]=6=2\cdot3$: the factor $3$ from the cubic
\eqref{eq:cubic}, the factor $2$ from $\sqrt3$, prime support $\{2,3\}$ --- the
reach of origami. Exact check: $81$ unit edges, $0$ ghosts.

\paragraph{$\Q(\cos\tfrac{2\pi}{11})$ (exotic).}
Base angles $0,\tfrac{2\pi}{11},\tfrac{4\pi}{11}$. Here $y=2\cos\tfrac{2\pi}{11}$
is a root of the irreducible quintic
\begin{equation}\label{eq:quintic}
  y^5+y^4-4y^3-3y^2+3y+1=0,
\end{equation}
equivalently $32x^5+16x^4-32x^3-12x^2+6x+1=0$ for $x=\cos\tfrac{2\pi}{11}$, so
$[\Q(\cos\tfrac{2\pi}{11}):\Q]=\varphi(11)/2=5$. Although $\cos\tfrac{2\pi}{11}$
is expressible by radicals (cyclic Galois group $C_5$), a degree divisible by
$5$ can never equal $2^a3^b$; and $11$ is not a Pierpont prime, so the regular
hendecagon is constructible neither by ruler and compass nor by paper folding.
The coordinate field is therefore \emph{exotic}. Exact check: $81$ unit edges,
$0$ ghosts.

\begin{figure}[t]\centering
  \includegraphics[width=.32\linewidth]{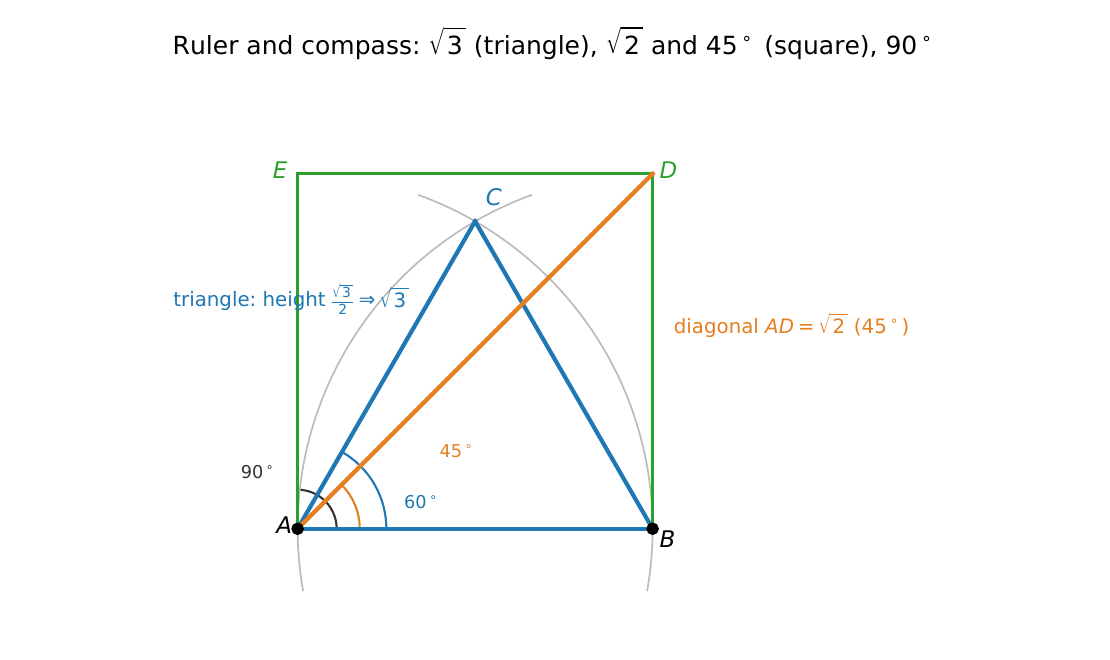}\hfill
  \includegraphics[width=.32\linewidth]{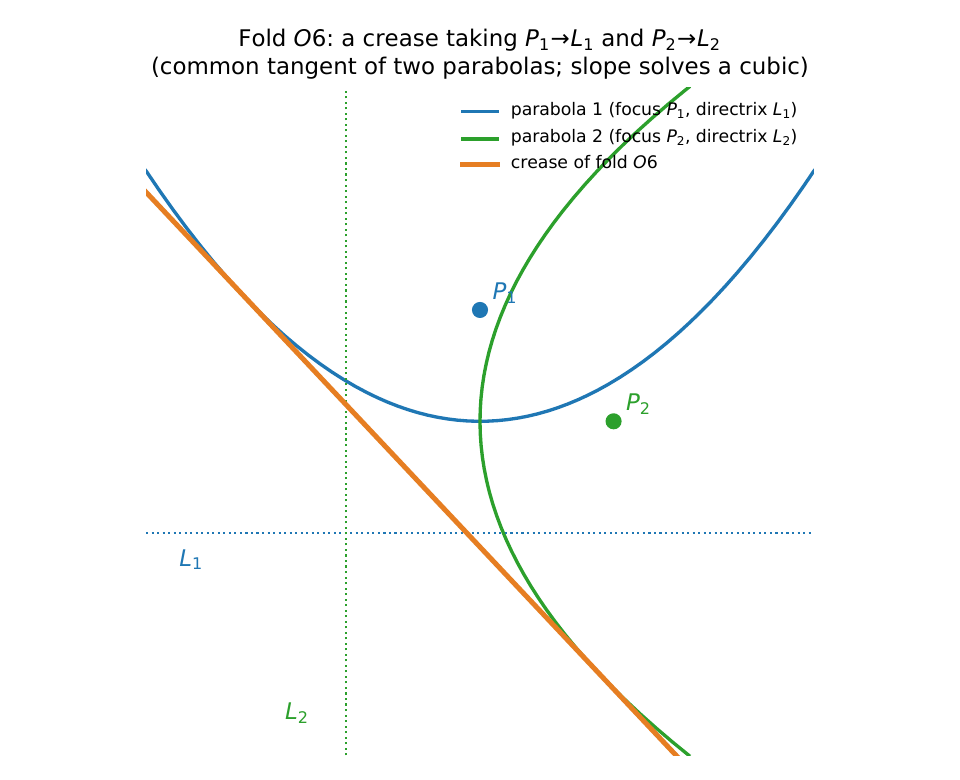}\hfill
  \includegraphics[width=.32\linewidth]{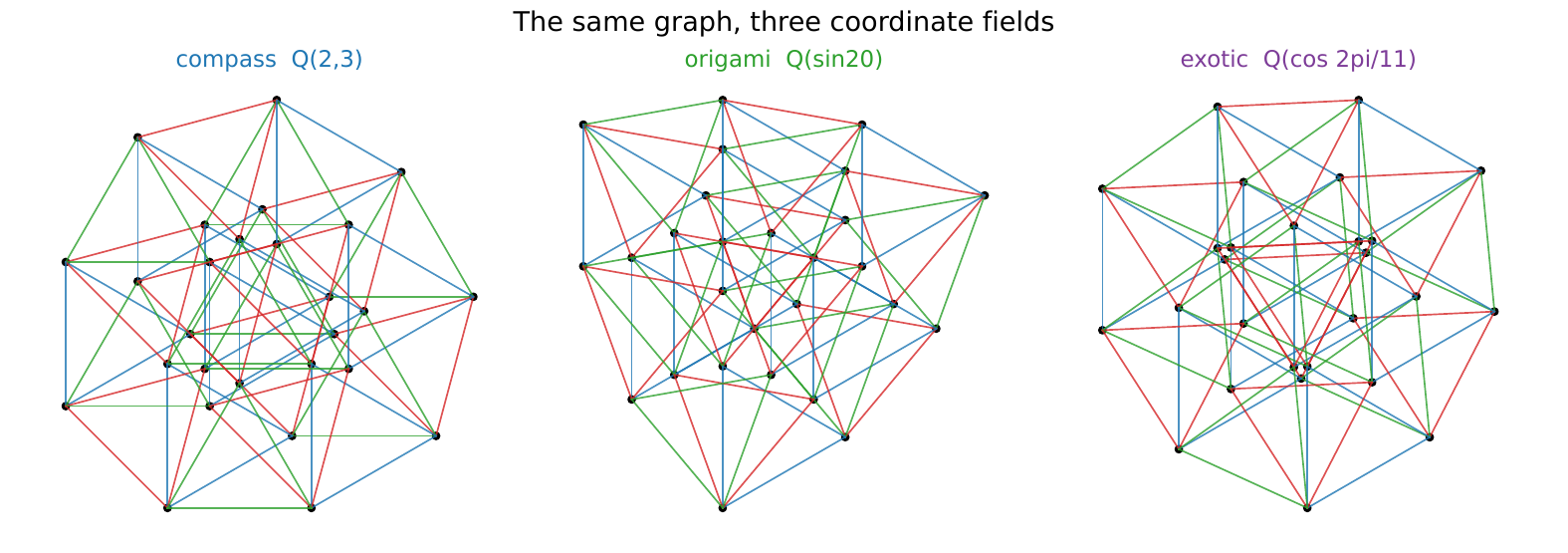}
  \caption{Left: ruler-and-compass construction of $\sqrt3,\sqrt2,45^\circ,
  90^\circ$. Middle: Beloch's fold $O6$ as a common tangent of two parabolas
  (the slope solves a cubic). Right: the same graph realized in three coordinate
  fields (compass / origami / exotic).}\label{fig:tiers}
\end{figure}

\begin{theorem}
The flexible unit-distance graph $H(3,3)$ admits faithful realizations whose
coordinate fields realize each tier of the constructibility hierarchy: compass
$\Q(\sqrt2,\sqrt3)$ (degree $4$), origami $\Q(\sin20^\circ)$ (degree $6$), and
exotic $\supseteq\Q(\cos\tfrac{2\pi}{11})$ (degree $5$). The governing base
polynomials are $y^2-2$, $y^3-3y-1$ and $y^5+y^4-4y^3-3y^2+3y+1$ respectively
(with $y=2\cos\alpha$).
\end{theorem}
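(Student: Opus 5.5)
The plan is to prove the theorem separately for the three base-angle families $\{0,\alpha,2\alpha\}$ with $\alpha\in\{45^\circ,20^\circ,\tfrac{2\pi}{11}\}$. In each case the triangles $u,v,w$ are rotated by these three angles and $H(3,3)$ is realized by \eqref{eq:mink}. Each family splits into three tasks: unit edges, faithfulness, and identification of the coordinate field. Unit edges are immediate from Proposition~\ref{prop:mink}, since only orientations were chosen.

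For faithfulness, write every difference $P(a,b,c)-P(a',b',c')$ as $(u_a-u_{a'})+(v_b-v_{b'})+(w_c-w_{c'})$. Each summand is either $0$ or one of the six unit vectors $e^{i(\phi+60^\circ k+30^\circ)}$, where $\phi$ is the rotation of that triangle. A non-adjacent pair therefore differs in two or three coordinates, and its difference vector is a sum of two or three such unit vectors drawn from distinct triangles.
\begin{itemize}
\item \textbf{Injectivity.} We must show no such sum is $0$. For two vectors this would require directions differing by $180^\circ$. For three it would require the three directions to be pairwise $120^\circ$ apart.
\item \textbf{No ghost unit distances.} We must show no such sum has modulus $1$. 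For two vectors this requires directions $120^\circ$ apart. For three it requires a sum of three unit vectors of modulus one, which forces two of them to cancel, so it reduces to the two-vector $180^\circ$ case.
\end{itemize}
Hence everything reduces to a check on directions. All directions lie in $\phi_i+30^\circ+60^\circ\mathbb Z$, so the obstruction is that some difference $\phi_i-\phi_j$ is $\equiv 0 \pmod{60^\circ}$. The pairwise differences $\alpha,\alpha,2\alpha$ equal $45^\circ,90^\circ$; $20^\circ,40^\circ$; and $\tfrac{360^\circ}{11},\tfrac{720^\circ}{11}$, and none of these is a multiple of $60^\circ$. This gives faithfulness uniformly; the exact-arithmetic checks cited in the text serve as corroboration.

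For the fields, all coordinates lie in $\Q(\sqrt3,\cos\alpha,\sin\alpha,\cos2\alpha,\sin2\alpha)$, generated by the rotated triangle vertices. Conversely, they contain the edge vectors, hence $\cos\alpha,\sin\alpha$ and $\sqrt3$ up to rational combinations.
\begin{itemize}
\item \textbf{Case $45^\circ$.} The field is $\Q(\sqrt2,\sqrt3)$ of degree $4$, and $y=2\cos45^\circ$ satisfies $y^2-2$.
\item \textbf{Case $20^\circ$.} The triple-angle relation \eqref{eq:cubic} gives $y^3-3y-1$ for $y=2\cos20^\circ$, and this cubic is irreducible by the rational root test. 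We then show the field equals $\Q(\sin20^\circ)$. The steps are: $\sqrt3=2\sin60^\circ=2(3s-4s^3)$ with $s=\sin20^\circ$; $\cos20^\circ=\sin70^\circ$ is expressible via $\cos20^\circ=(\sin40^\circ)/(2s)$; and $\sin40^\circ=\sin(60^\circ-20^\circ)$ lies in $\Q(s,\sqrt3)$. The degree is $6$ by \eqref{eq:deg6}, whose irreducibility follows from the tower $[\Q(s):\Q]=[\Q(s):\Q(\cos 20^\circ)]\cdot 3$ together with $\sqrt3\notin\Q(\cos20^\circ)$, which holds because $2\nmid 3$.
\item \textbf{Case $\tfrac{2\pi}{11}$.} The field contains $\Q(\cos\tfrac{2\pi}{11})$, the maximal real subfield of $\Q(\zeta_{11})$, of degree $5$ with minimal polynomial \eqref{eq:quintic}. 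This is why the statement only claims $\supseteq$. Exoticity follows because $5$ divides the degree of any field containing it, so the degree is never $2^a3^b$.
\end{itemize}

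The main obstacle is the exact identification of the origami field as $\Q(\sin20^\circ)$ rather than merely a field containing it, together with the irreducibility of the sextic. I would handle both by the tower argument above rather than by direct factorization.
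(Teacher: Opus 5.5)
Your proof is correct in outline and differs from the paper's mainly in how faithfulness is established. The paper uses the same three bases $\{0,\alpha,2\alpha\}$ and Proposition~\ref{prop:mink} for the unit edges. It certifies faithfulness only by an exact-arithmetic computation (``$81$ unit edges, $0$ ghosts'') for each base. It reads the fields off the minimal polynomials, asserting rather than deriving both the irreducibility of \eqref{eq:deg6} and the equality of the coordinate field with $\Q(\sin20^\circ)$.

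Your direction argument replaces that computation with a proof, and every piece of it holds:
\begin{itemize}
\item the edge vectors of a triangle twisted by $\phi$ are exactly $e^{i(\phi+30^\circ+60^\circ k)}$;
\item two unit vectors sum to modulus $0$ or $1$ only at angle $180^\circ$ or $120^\circ$;
\item three unit vectors sum to $0$ only when pairwise $120^\circ$ apart;
\item three unit vectors summing to modulus $1$ must contain a cancelling pair, since four unit vectors with zero sum split into two opposite pairs.
\end{itemize}
It also gives more than the paper's check, because the converse holds. If $\phi_i\equiv\phi_j \pmod{60^\circ}$, the two triangles have the same edge-vector set and two vertices collide. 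So the non-faithful parameters are \emph{exactly} the lines $\alpha_v,\alpha_w,\alpha_v-\alpha_w\in60^\circ\mathbb Z$. This matches the paper's degenerate base $0^\circ,30^\circ,90^\circ$ and sharpens Proposition~\ref{prop:faithful-transc}(i). Your tower argument for the degree $6$ is likewise more explicit than the paper's.

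One step is circular as written. You put $\cos20^\circ$ in $\Q(s,\sqrt3)$ via $\cos20^\circ=\sin40^\circ/(2s)$ together with the claim $\sin40^\circ=\sin(60^\circ-20^\circ)\in\Q(s,\sqrt3)$. But expanding the latter gives $\tfrac{\sqrt3}{2}\cos20^\circ-\tfrac12 s$, which contains the very $\cos20^\circ$ you are trying to place. There are two easy repairs:
\begin{itemize}
\item Combine the two identities: this gives $(\sqrt3-4s)\cos20^\circ=s$, and $\sqrt3-4s\neq0$ since $4\sin20^\circ<4\pi/9<\sqrt3$.
\item More simply, $\cos3\theta=\cos\theta\,(1-4\sin^2\theta)$ at $\theta=20^\circ$ gives $\cos20^\circ=1/\bigl(2(1-4s^2)\bigr)\in\Q(s)$ outright, with no nonvanishing check.
\end{itemize}
The repair matters because your count $[\Q(s):\Q]=3\,[\Q(s):\Q(\cos20^\circ)]\ge6$, and hence the irreducibility of \eqref{eq:deg6}, rests on $\Q(\cos20^\circ)\subseteq\Q(s)$. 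With it in place the argument is complete.
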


\section{The space of faithful realizations}\label{sec:faithful}
Because the realization is a genuine two-parameter flex, the faithful realizations
form a \emph{two-dimensional continuum}, not a discrete set: sweeping the base
angles $(\alpha_v,\alpha_w)$ (with $\alpha_u=0$ fixing the gauge) traces the whole
family. Figure~\ref{fig:faithful} maps it. Faithfulness holds on an \emph{open, full-measure}
set (Proposition~\ref{prop:faithful-transc}(i)): the only unfaithful parameters lie on the
ghost loci --- where a non-adjacent pair falls at distance exactly $1$, or two vertices
coincide --- a measure-zero union of real-analytic curves, the most visible being the
diagonal $\alpha_v=\alpha_w$ and the lines at multiples of $60^\circ$. In the diagram these
curves are thickened for visibility; the bright cells (about $61\%$ of the sampled grid at
the plotted tolerance) are those safely away from every ghost curve --- the \emph{robustly}
faithful parameters, not the measure of faithfulness, which is full. The two flexes are explicit: by Proposition~\ref{prop:mink}
rotating $v$ and $w$ by independent angles preserves all $81$ edge lengths, so
$(\alpha_v,\alpha_w)$ already furnishes a two-parameter family of realizations; and
at every faithful realization the rigidity matrix has rank exactly
$49=2n-3-2$, so this two-parameter flex is the \emph{whole} infinitesimal flex ---
even though the abstract graph $H(3,3)$ is \emph{generically rigid} (a random
embedding gives rank $2n-3=51$). The flexibility is thus a unit-distance phenomenon
of the Minkowski structure, present only on this special locus, not generic Laman
flexibility. It is this flex that lets $H(3,3)$ range over the
constructibility hierarchy --- moving within the faithful family carries the
coordinates from compass to origami to exotic without breaking any of the $81$
unit edges.

\begin{figure}[t]\centering
  \includegraphics[width=.9\linewidth]{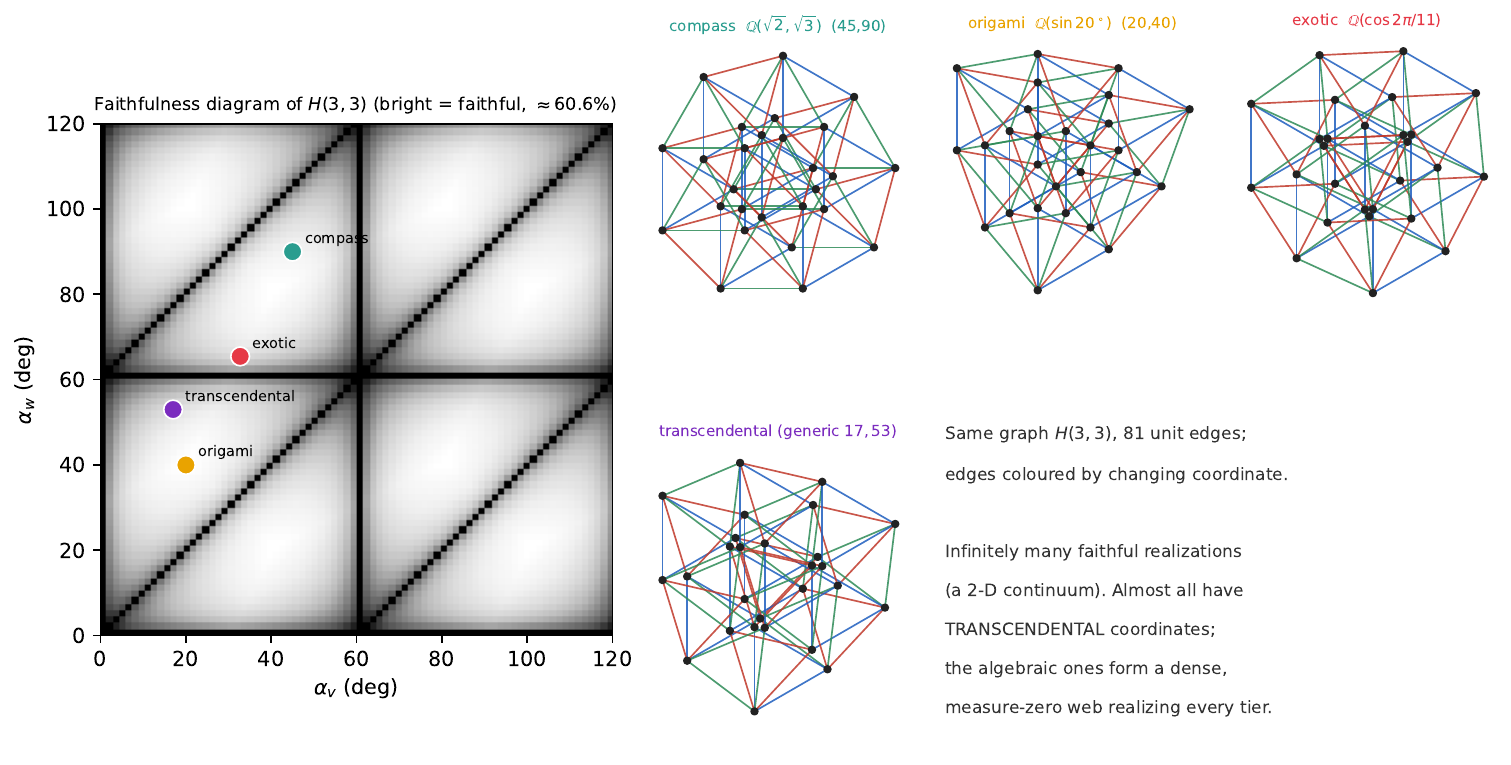}
  \caption{The faithful realizations of $H(3,3)$. \emph{Left:} the faithfulness
  diagram over the two-parameter flex $(\alpha_v,\alpha_w)$ (bright $=$ robustly faithful
  cells, safely off every ghost curve at the plotted tolerance; dark $=$ the ghost curves
  themselves, a measure-zero set thickened for visibility). \emph{Right:} four realizations of the
  \emph{same} graph, one per tier --- ruler-and-compass $\Q(\sqrt2,\sqrt3)$ at
  $(45^\circ,90^\circ)$, origami $\Q(\sin20^\circ)$ at $(20^\circ,40^\circ)$,
  exotic $\Q(\cos\tfrac{2\pi}{11})$ at $(\tfrac{2\pi}{11},\tfrac{4\pi}{11})$, and a
  generic \emph{transcendental} realization at $(17^\circ,53^\circ)$; edges are
  coloured by the changing coordinate.}
  \label{fig:faithful}
\end{figure}

\begin{remark}[Uncountably many faithful realizations, almost all transcendental]
The faithful family is a two-dimensional continuum, so $H(3,3)$ has
\emph{uncountably many} faithful unit-distance realizations up to congruence. For a
generic angle $\alpha$ the number $\cos\alpha$ is transcendental, so \emph{almost
every} faithful realization has transcendental coordinates --- its coordinate
``field'' is not a number field at all, and the constructibility hierarchy simply
does not apply. The tiers live only on the \emph{algebraic-angle} realizations, a
countable, dense, measure-zero web inside the faithful continuum; and on that web
one meets every tier --- ruler-and-compass, origami, and exotic. The range over
the hierarchy is thus not continuous but a dense algebraic lattice inside the
predominantly transcendental faithful family.
\end{remark}

We justify these statements.

\begin{proposition}\label{prop:faithful-transc}
Fix $\alpha_u=0$ and parametrise the planar realizations by $(\alpha_v,\alpha_w)$.
\begin{enumerate}[label=\textup{(\roman*)}]
\item \textup{(Faithfulness is open and field-free.)} The set of faithful parameters
is open and of full Lebesgue measure; its complement is a finite union of
real-analytic ``ghost'' curves. No algebraic hypothesis on the coordinates is used.
\item \textup{(Almost all realizations are transcendental.)} The coordinate field is a
number field only for a countable, measure-zero set of parameters; for almost every
$(\alpha_v,\alpha_w)$ it is a transcendental extension of $\Q$.
\end{enumerate}
\end{proposition}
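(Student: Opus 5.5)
For (i) I would write everything in closed form. Put $u_k=R\,e^{i(2\pi k/3)}$, $v_k=R\,e^{i(\alpha_v+2\pi k/3)}$, $w_k=R\,e^{i(\alpha_w+2\pi k/3)}$ in $\mathbb C\cong\R^2$. Then for two vertices $x=(a,b,c)$ and $y=(a',b',c')$ we have
\[
P(x)-P(y)=(u_a-u_{a'})+e^{i\alpha_v}(v^0_b-v^0_{b'})+e^{i\alpha_w}(w^0_c-w^0_{c'}),
\]
where $v^0,w^0$ are the unrotated triangles. Each bracket is either $0$ or a fixed unit vector $\epsilon_j$ (a rotated side vector). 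Collisions and ghosts are then the zero sets of the trigonometric polynomials
\[
g_{xy}(\alpha_v,\alpha_w)=|P(x)-P(y)|^2-1 \quad (xy\notin E,\ x\ne y), \qquad h_{xy}=|P(x)-P(y)|^2 \quad (x\ne y).
\]
There are finitely many such pairs ($\binom{27}{2}-81$ and $\binom{27}{2}$). Each function is real-analytic, in fact a trigonometric polynomial in $\alpha_v,\alpha_w$, on the torus. The faithful set is the complement of the finite union of their zero sets. It is open because these functions are continuous and the edges are unit identically by Proposition~\ref{prop:mink}.

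For full measure I would use the standard fact that the zero set of a real-analytic function on a connected domain that is not identically zero has Lebesgue measure zero. It would also be a finite union of analytic curves and points, by the Łojasiewicz structure of analytic sets. So the crux is to show that no $g_{xy}$ or $h_{xy}$ vanishes identically.

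This is the step I expect to be the main obstacle, and I would handle it by cases on which coordinates differ.
\begin{itemize}
\item If $x,y$ differ in exactly one coordinate they are adjacent, so there is nothing to check.
\item If they differ in two coordinates, say $a,b$, then $|\epsilon_1+e^{i\alpha_v}\epsilon_2|^2-1=1+2\cos(\alpha_v+\phi)$ for a constant phase $\phi$. This is a nonconstant function, and $h_{xy}=2+2\cos(\alpha_v+\phi)$ is likewise nonzero. For pairs involving $w$ the same holds with $\alpha_w$ or $\alpha_w-\alpha_v$.
\item If they differ in all three, expand $|\epsilon_1+e^{i\alpha_v}\epsilon_2+e^{i\alpha_w}\epsilon_3|^2=3+2\cos(\alpha_v+\phi_1)+2\cos(\alpha_w+\phi_2)+2\cos(\alpha_w-\alpha_v+\phi_3)$. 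Its Fourier coefficient on $e^{i\alpha_v}$ is nonzero, so $g_{xy}$ and $h_{xy}$ are nonconstant, hence not identically zero.
\end{itemize}
Alternatively, exhibiting one faithful point, e.g.\ $(20^\circ,40^\circ)$ verified in the previous section, already shows that every $g_{xy},h_{xy}$ is somewhere nonzero, and analyticity on the connected torus finishes. No field assumption enters anywhere, which gives the ``field-free'' clause.

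For (ii) I would note that the coordinate field $K$ of a realization contains $\Q(\sqrt3)$ and the coordinates of $v_1-v_0=e^{i\alpha_v}(v^0_1-v^0_0)$, hence contains $\cos\alpha_v$ and $\sin\alpha_v$ (up to elements of $\Q(\sqrt3)$), and likewise for $\alpha_w$. Conversely $K\subseteq\Q(\sqrt3,\cos\alpha_v,\sin\alpha_v,\cos\alpha_w,\sin\alpha_w)$. So $K$ is a number field iff $\cos\alpha_v$ and $\cos\alpha_w$ are algebraic, using that $\sin$ is then algebraic as well. The set of $\alpha\in[0,2\pi)$ with $\cos\alpha\in\overline{\Q}$ is countable, since each algebraic number has at most two preimages. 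Hence the algebraic parameters lie in $A\times A$ with $A$ countable, a countable and therefore null set. Everywhere else $K$ contains a transcendental element, so $K/\Q$ is transcendental. Intersecting with the full-measure faithful set from (i) gives the statement for faithful realizations.
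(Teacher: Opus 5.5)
Your proposal is correct and follows essentially the paper's route: the non-edge distance functions are real-analytic and not identically zero, so the faithful set is open and of full measure (the paper certifies non-vanishing via the exactly checked faithful point $(45^\circ,90^\circ)$, which is your ``alternatively''), and (ii) follows from the countability of $\{\alpha:\cos\alpha\in\overline{\Q}\}$. Your additions tighten details the paper leaves implicit without changing the argument: the collision functions $h_{xy}$ for injectivity, the Fourier-coefficient check that needs no computer-verified point, the reverse inclusion via $v_1-v_0$ that justifies the ``iff'', and the countable locus $A\times A$ (the paper only says ``contained in a countable union of lines'').
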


\begin{proof}
By Proposition~\ref{prop:mink} every edge has length $1$ identically in
$(\alpha_v,\alpha_w)$, an algebraic identity requiring no field. For each
non-adjacent pair $\{x,y\}$ the squared distance $g_{xy}(\alpha_v,\alpha_w)=\|P(x)-P(y)\|^2$
is a real-analytic function; a \emph{ghost} occurs where $g_{xy}=1$. Since faithful
parameters exist (e.g.\ the compass point $(45^\circ,90^\circ)$), no $g_{xy}-1$ is
identically zero, so each $\{g_{xy}=1\}$ is a real-analytic curve --- closed and of
measure zero --- and the ghost locus is their finite union. Its complement, the
faithful set, is therefore open and of full measure, and membership is decided by
the real inequalities $g_{xy}\neq1$ (checked by interval arithmetic), never by the
coordinate field. This proves (i).

For (ii), the coordinates lie in $K=\Q(\sqrt3,\cos\alpha_v,\sin\alpha_v,\cos\alpha_w,
\sin\alpha_w)$, which is a number field iff $\cos\alpha_v$ and $\cos\alpha_w$ are both
algebraic (then $\sin=\sqrt{1-\cos^2}$ is too). The set $\{\alpha:\cos\alpha\ \text{is
algebraic}\}$ is countable: for each algebraic $y\in[-1,1]$ the equation
$\cos\alpha=y$ has countably many solutions, and the algebraic numbers are countable,
so a countable union of countable sets results. Hence the algebraic locus in the
$(\alpha_v,\alpha_w)$ plane is contained in a countable union of lines, of measure
zero; off it, $K$ is a transcendental extension of $\Q$.
\end{proof}

\begin{remark}[Two certificates: faithfulness is field-free, the tier is not]
The distinction is worth stating precisely. \emph{Faithfulness} is an open real
condition --- injectivity together with $\|P(x)-P(y)\|\neq1$ on non-edges --- so it
is certified \emph{without any algebraic extension}: the $81$ unit edges hold by the
Minkowski identity~\eqref{eq:mink} (a polynomial identity in the base angles, valid
for algebraic or transcendental $\alpha$), and the absence of ghosts is certified
either by interval arithmetic bounding every non-adjacent distance away from $1$, or
by placing the angles strictly inside a faithful cell of Figure~\ref{fig:faithful}.
The coordinate field never enters. The \emph{tier} --- ruler-and-compass, origami,
or exotic --- is the opposite: it \emph{is} the degree and Galois group of
$\Q(\text{coordinates})$, an arithmetic certificate that lives inside the extension
and exists only when the coordinates are algebraic. Hence a transcendental faithful
realization carries a complete faithfulness certificate but \emph{no tier at all}:
faithfulness is geometry over $\R$, the tier is arithmetic over $\Q$.
\end{remark}

\section{The enneagon twist: $H(3,3)$ as three star-enneagons}
The origami tier has a symmetric realization that connects this note to
the concentric-polygon constructions of the heptagon graphs~\cite{HCSF-hept}. Take three concentric
\emph{unit-edge star-enneagons} $\{9/1\},\{9/2\},\{9/4\}$ of radii
$\rho_r=1/(2\sin r\pi/9)$, and give the inner layer a $20^\circ$ twist. Joining the
layers by the unit braces they admit --- whose subtended angles are $40^\circ$,
$20^\circ$ and $100^\circ$, all governed by the trisection cubic $8x^3-6x+1$, the
minimal polynomial of $\cos\tfrac{2\pi}{9}=\cos40^\circ$ --- gives $27$ points and
$81$ unit segments (verified to $5\times10^{-16}$); Figure~\ref{fig:enneagon}.

\begin{proposition}\label{prop:enneagon}
The star-enneagon twist is isomorphic to $H(3,3)$: $27$ vertices, $81$ edges,
$6$-regular, $27$ triangles, spectrum $\{6,3^{6},0^{12},(-3)^{8}\}$ (an explicit
isomorphism to $\{0,1,2\}^3$ is recorded). It is an \emph{origami} realization ---
its coordinates lie in $\Q(\cos\tfrac{\pi}{9},\sin\tfrac{\pi}{9})$, which contains
the cyclic cubic $\Q(\cos\tfrac{2\pi}{9})$ of the trisection (Galois group $C_3$),
so the field is origami but not ruler-and-compass, since $9=3^2$ is not a
constructible order.
\end{proposition}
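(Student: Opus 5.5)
The cleanest route is to recognize the star-enneagon twist as a special instance of the Minkowski realization \eqref{eq:mink}, with base angles chosen at multiples of $20^\circ$. Then Proposition~\ref{prop:mink} gives every unit edge at once, and isomorphism with $H(3,3)$ follows without any case analysis. Concretely, I would take three unit triangles $u,v,w$, each with circumradius $R=1/\sqrt3$, rotated by $0^\circ$, $20^\circ$ and $40^\circ$. The first step is to check that the $27$ points $u_a+v_b+w_c$ coincide with the vertex set of the three concentric star-enneagons. The plan for this check is to compute $|u_a+v_b+w_c|^2$ and group the points by modulus. The sum of three vectors of length $R$ at relative angles drawn from $\{0^\circ,20^\circ,40^\circ\}+120^\circ\mathbb Z$ gives exactly three orbits of size $9$ under the rotation by $40^\circ$. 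That rotation acts on $(a,b,c)$ by a cyclic shift of the labels combined with index shifts, and the three orbits should be the $\{9/1\},\{9/2\},\{9/4\}$ layers, with radii $\rho_r=1/(2\sin r\pi/9)$.

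The explicit bijection with $\{0,1,2\}^3$ is then simply $abc\mapsto P(a,b,c)$. I would match its edge set with the geometric one as follows. By Proposition~\ref{prop:mink} the $81$ Hamming edges are unit segments. Conversely, the faithfulness check at $(20^\circ,40^\circ)$ shows that no non-adjacent pair is at distance $1$; this is the origami point already certified in exact arithmetic in the previous section. So the set of unit segments is exactly the Hamming edge set. It also contains the enneagon sides and the braces, since the paper lists exactly $81$ segments and both sets have size $81$. The invariants (27 vertices, $6$-regular, $81$ edges, $27$ triangles, spectrum $\{6,3^6,0^{12},(-3)^8\}$) then follow from the isomorphism. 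Alternatively, they can be obtained directly: the spectrum of $K_3^{\bx 3}$ consists of the sums of three eigenvalues from $\{2,-1,-1\}$, and the triangles are the $3\cdot 9$ fibres isomorphic to $K_3$.

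For the field, I would note that all vertices are integer combinations of $R\,e^{i k\pi/9}$. Their coordinates therefore lie in $\Q(\sqrt3,\cos\tfrac{\pi}{9},\sin\tfrac{\pi}{9})$, and since $\sqrt3=2\sin(\pi/3)$ is a polynomial in $\cos\tfrac\pi9,\sin\tfrac\pi9$, this field equals $\Q(\cos\tfrac{\pi}{9},\sin\tfrac{\pi}{9})$. This field contains $\cos\tfrac{2\pi}{9}=2\cos^2\tfrac\pi9-1$. Its minimal polynomial $8x^3-6x+1$ is irreducible by the rational root test, and $\Q(\cos\tfrac{2\pi}9)$ is the real subfield of $\Q(\zeta_9)$, which is cyclic of degree $3$. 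So $3$ divides the degree, and the field is not ruler-and-compass. On the other hand, the field sits inside $\Q(\zeta_{36})$ or $\Q(\zeta_{18},i)$, whose degree divides $2^a3^b$, and the angle is obtained by trisecting $60^\circ$ with Beloch's fold. So the field is origami, matching \eqref{eq:cubic} up to a sign.

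I expect the main obstacle to be the identification of the abstract Minkowski points with the specific star-enneagon layers together with the $20^\circ$ twist. This means pinning down which rotation of $w$ relative to $v$ reproduces the inner-layer twist, and verifying the three radii. I would do this by a direct complex-number computation: set $\omega=e^{2\pi i/3}$ and $\zeta=e^{i\pi/9}$, write $P=R(\omega^a+\zeta^{2}\omega^b+\zeta^{4}\omega^c)$, and reduce modulo $\Phi_{18}$. If the chosen angles give the wrong layer structure, I would adjust them, for example to $(40^\circ,80^\circ)$ or $(20^\circ,80^\circ)$, until the moduli match $\rho_1,\rho_2,\rho_4$.
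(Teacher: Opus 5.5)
Your strategy of exhibiting the enneagon twist as a Minkowski sum, so that Proposition~\ref{prop:mink} does the work, is sound. It is also more conceptual than the paper's justification, which builds the stars and braces, checks the $27$ points and $81$ unit segments numerically, and records an isomorphism. But the parameter point you commit to is wrong, and the rest of your argument is tied to it.

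At orientations $(0^\circ,20^\circ,40^\circ)$ the Minkowski set is \emph{not} the enneagon configuration. For example, $|P(0,0,0)|=(1+2\cos20^\circ)/\sqrt3\approx1.662$, which exceeds the outer radius $\rho_1=1/(2\sin20^\circ)\approx1.462$. In fact those $27$ points lie on six different circles, and the set has only $3$-fold symmetry: rotating by $40^\circ$ sends $v$ to $-u$, which is not one of the three triangles. (The paper's crossing-number section also calls the $0^\circ,20^\circ,40^\circ$ drawing ``the enneagon twist'', which is presumably where this came from; the radius computation shows the two differ.)

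The configuration that does match is equal spacing $(0^\circ,40^\circ,80^\circ)$. You list it as a fallback, and it is what your formula $R(\omega^a+\zeta^2\omega^b+\zeta^4\omega^c)$ with $\zeta=e^{i\pi/9}$ actually encodes. There, multiplication by $e^{2\pi i/9}$ sends $u_a\mapsto v_a$, $v_b\mapsto w_b$ and $w_c\mapsto u_{c+1}$, so $P(a,b,c)\mapsto P(c+1,a,b)$. The three orbits then have radii exactly $\rho_1,\rho_2,\rho_4$; for instance $(1+2\cos40^\circ)/\sqrt3=1/(2\sin20^\circ)$. The outer and middle orbits sit at angles $40^\circ k$ and the inner one at $20^\circ+40^\circ k$. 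So the ``$20^\circ$ twist'' is the angular offset of the inner orbit, not a relative rotation of two triangles.

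Once you move to $(40^\circ,80^\circ)$, the faithfulness certificate you import from Section~4 no longer applies, because it concerns the different point $(20^\circ,40^\circ)$. Your step ``the set of unit segments is exactly the Hamming edge set'' is therefore unsupported, and the paper offers nothing else to cite for this configuration beyond its numerical check. You must count the unit pairs yourself, which is short:
\begin{itemize}
\item Within the layer of radius $\rho_r$, the step-$k$ chord is $\sin(20k^\circ)/\sin(20r^\circ)$, which equals $1$ only for $k=r$.
\item Between two layers, the squared distance $\rho_i^2+\rho_j^2-2\rho_i\rho_j\cos\theta$ is strictly monotone in $\cos\theta$. So only the offsets $\pm40^\circ$ (outer--middle), $\pm20^\circ$ (outer--inner) and $\pm100^\circ$ (middle--inner) give distance $1$.
\end{itemize}
That yields exactly $27+54=81$ unit pairs, and your counting argument then identifies them with the Hamming edges. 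The configuration is faithful, though not in general position, as the paper's remark notes.

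A smaller point: ``not ruler-and-compass'' needs the field \emph{generated by the coordinates}, not the ambient $\Q(\cos\tfrac\pi9,\sin\tfrac\pi9)$, to contain the cubic. Show this directly: the edge vector $v_1-v_0=e^{i190^\circ}$ has $y$-coordinate $-\sin10^\circ=\cos260^\circ$, a root of $8x^3-6x-1$.

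With these repairs your proof is exact. It also explains the $9$-fold symmetry and the isomorphism $abc\mapsto P(a,b,c)$, which the paper's floating-point verification does not.
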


\begin{figure}[t]\centering
  \includegraphics[width=.9\linewidth]{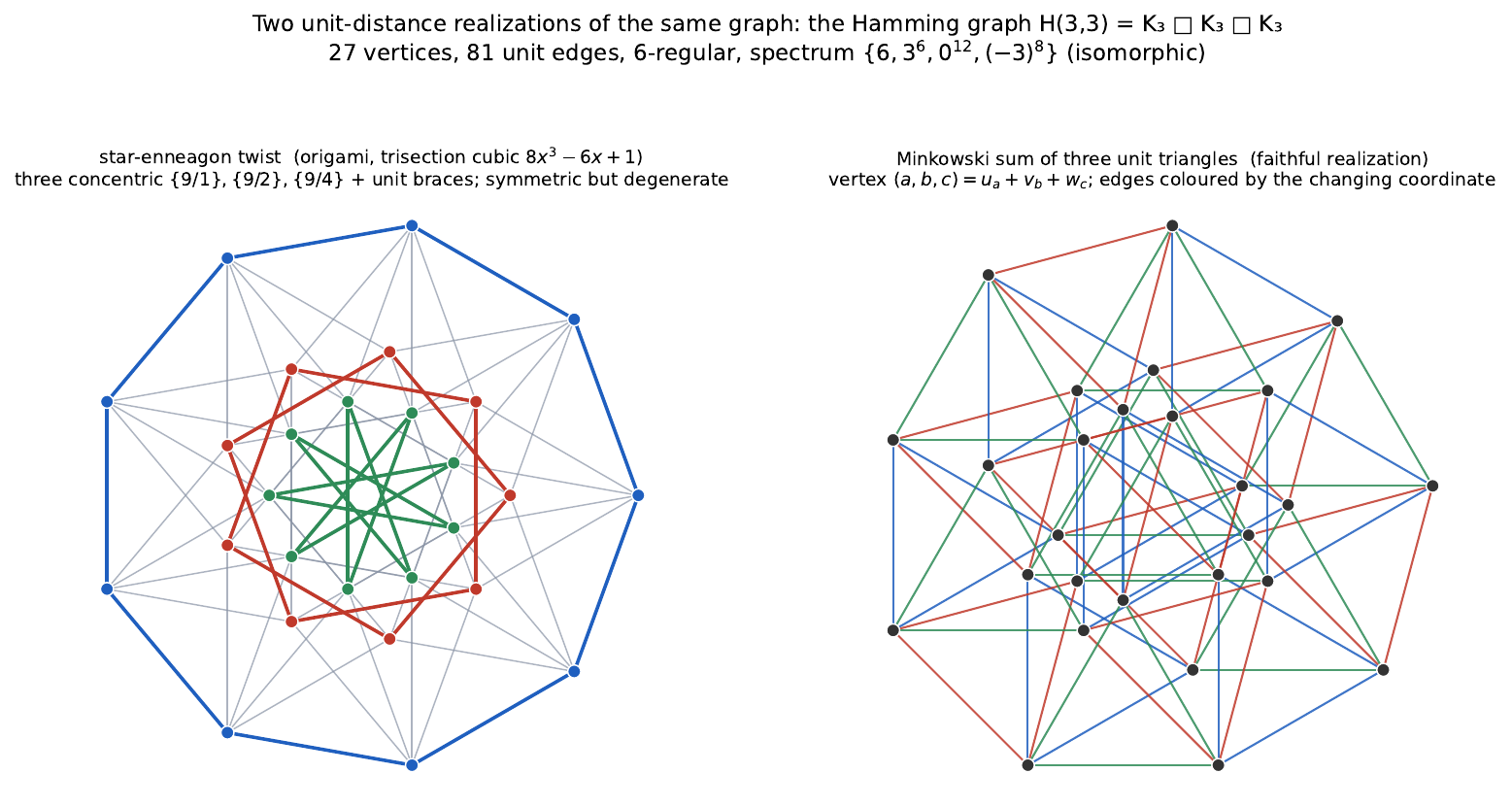}
  \caption{Two unit-distance realizations of the \emph{same} graph. \emph{Left:} the
  star-enneagon twist --- three concentric $\{9/1\},\{9/2\},\{9/4\}$ with a
  $20^\circ$ turn of the inner layer, an origami realization in the trisection field.
  \emph{Right:} a Minkowski-sum realization. Both are $H(3,3)$ (same spectrum, $27$
  triangles, $81$ unit edges).}
  \label{fig:enneagon}
\end{figure}

\begin{remark}
The maximal $9$-fold symmetry has a price: this realization is \emph{degenerate}
--- $9$ vertices (the inner layer) each lie in the interior of two non-incident edges, a total
of $18$ vertex--edge incidences (the $27$ points remain distinct, so this is a concurrence, not a
collision) --- and correspondingly
infinitesimally flexible (rigidity rank $49$, the two symmetry-forced flexes above).
It is thus one \emph{degenerate} point of the flexible family of
Figure~\ref{fig:faithful}, not a strict realization; the faithful, non-degenerate
representatives of this same origami tier are the Minkowski ones. Its value is as a
symmetric picture, and as a bridge: the concentric three-star construction that
yields the heptagon graphs yields, for the $9$-gon, the Hamming graph itself.
\end{remark}

\section{A construction algorithm and exact coordinates}
The planar realization is produced by the following procedure; its arithmetic
difficulty is concentrated entirely in Step~2 (the base angles), while Steps~3--4
are rational translations and a distance audit.

\begin{center}
\fbox{\begin{minipage}{0.94\linewidth}
\textbf{Algorithm 1.} Planar unit-distance realization of $H(3,3)$.\\[2pt]
\textbf{Input:} base angles $\alpha_u,\alpha_v,\alpha_w$.
\begin{enumerate}\setlength{\itemsep}{1pt}
  \item $R\gets 1/\sqrt3$ \quad(circumradius of a unit triangle).
  \item \textbf{for} $t\in\{u,v,w\}$ and $k\in\{0,1,2\}$:\quad
        $t_k\gets R\bigl(\cos(\alpha_t+120^\circ k),\ \sin(\alpha_t+120^\circ k)\bigr)$
        \quad(build the three triangles).
  \item \textbf{for} $(a,b,c)\in\{0,1,2\}^3$:\quad $P(a,b,c)\gets u_a+v_b+w_c$
        \quad(Minkowski sum: the 27 vertices).
  \item \emph{assert} $\|P(x)-P(y)\|=1$ for every edge $xy$
        \quad(holds by Proposition~\ref{prop:mink}).
  \item $\textit{faithful}\gets\bigl[\ \|P(x)-P(y)\|\neq1$ for all non-adjacent
        $x,y\ \bigr]$.
  \item $K\gets\Q(\cos\alpha_u,\sin\alpha_u,\cos\alpha_v,\dots)$;\quad
        $\textit{tier}\gets$ compass / origami / exotic by $[K:\Q]$.
\end{enumerate}
\textbf{Output:} coordinates $\{P(a,b,c)\}$ and $(\textit{faithful},\,K,\,\textit{tier})$.
\end{minipage}}
\end{center}

The nine generators are, in each tier, exact cosines and sines of the base
angles ($R=1/\sqrt3$, $P(a,b,c)=u_a+v_b+w_c$):
\[
\begin{array}{@{}l@{\;}l@{}}
\text{compass} & {\footnotesize\setlength{\tabcolsep}{4pt}
\begin{tabular}{lll}
$u_0=R(\cos(0),\sin(0))$ & $v_0=R(\cos(\tfrac{\pi}{4}),\sin(\tfrac{\pi}{4}))$ & $w_0=R(\cos(\tfrac{\pi}{2}),\sin(\tfrac{\pi}{2}))$ \\
$u_1=R(\cos(\tfrac{2\pi}{3}),\sin(\tfrac{2\pi}{3}))$ & $v_1=R(\cos(\tfrac{\pi}{4}+\tfrac{2\pi}{3}),\sin(\tfrac{\pi}{4}+\tfrac{2\pi}{3}))$ & $w_1=R(\cos(\tfrac{\pi}{2}+\tfrac{2\pi}{3}),\sin(\tfrac{\pi}{2}+\tfrac{2\pi}{3}))$ \\
$u_2=R(\cos(\tfrac{4\pi}{3}),\sin(\tfrac{4\pi}{3}))$ & $v_2=R(\cos(\tfrac{\pi}{4}+\tfrac{4\pi}{3}),\sin(\tfrac{\pi}{4}+\tfrac{4\pi}{3}))$ & $w_2=R(\cos(\tfrac{\pi}{2}+\tfrac{4\pi}{3}),\sin(\tfrac{\pi}{2}+\tfrac{4\pi}{3}))$ \\
\end{tabular}}
\\[10pt]
\text{origami} & {\footnotesize\setlength{\tabcolsep}{4pt}
\begin{tabular}{lll}
$u_0=R(\cos(0),\sin(0))$ & $v_0=R(\cos(\tfrac{\pi}{9}),\sin(\tfrac{\pi}{9}))$ & $w_0=R(\cos(\tfrac{2\pi}{9}),\sin(\tfrac{2\pi}{9}))$ \\
$u_1=R(\cos(\tfrac{2\pi}{3}),\sin(\tfrac{2\pi}{3}))$ & $v_1=R(\cos(\tfrac{\pi}{9}+\tfrac{2\pi}{3}),\sin(\tfrac{\pi}{9}+\tfrac{2\pi}{3}))$ & $w_1=R(\cos(\tfrac{2\pi}{9}+\tfrac{2\pi}{3}),\sin(\tfrac{2\pi}{9}+\tfrac{2\pi}{3}))$ \\
$u_2=R(\cos(\tfrac{4\pi}{3}),\sin(\tfrac{4\pi}{3}))$ & $v_2=R(\cos(\tfrac{\pi}{9}+\tfrac{4\pi}{3}),\sin(\tfrac{\pi}{9}+\tfrac{4\pi}{3}))$ & $w_2=R(\cos(\tfrac{2\pi}{9}+\tfrac{4\pi}{3}),\sin(\tfrac{2\pi}{9}+\tfrac{4\pi}{3}))$ \\
\end{tabular}}
\\[10pt]
\text{exotic}  & {\footnotesize\setlength{\tabcolsep}{4pt}
\begin{tabular}{lll}
$u_0=R(\cos(0),\sin(0))$ & $v_0=R(\cos(\tfrac{2\pi}{11}),\sin(\tfrac{2\pi}{11}))$ & $w_0=R(\cos(\tfrac{4\pi}{11}),\sin(\tfrac{4\pi}{11}))$ \\
$u_1=R(\cos(\tfrac{2\pi}{3}),\sin(\tfrac{2\pi}{3}))$ & $v_1=R(\cos(\tfrac{2\pi}{11}+\tfrac{2\pi}{3}),\sin(\tfrac{2\pi}{11}+\tfrac{2\pi}{3}))$ & $w_1=R(\cos(\tfrac{4\pi}{11}+\tfrac{2\pi}{3}),\sin(\tfrac{4\pi}{11}+\tfrac{2\pi}{3}))$ \\
$u_2=R(\cos(\tfrac{4\pi}{3}),\sin(\tfrac{4\pi}{3}))$ & $v_2=R(\cos(\tfrac{2\pi}{11}+\tfrac{4\pi}{3}),\sin(\tfrac{2\pi}{11}+\tfrac{4\pi}{3}))$ & $w_2=R(\cos(\tfrac{4\pi}{11}+\tfrac{4\pi}{3}),\sin(\tfrac{4\pi}{11}+\tfrac{4\pi}{3}))$ \\
\end{tabular}}

\end{array}
\]

For the compass tier the $27$ vertices are the following exact numbers in
$\Q(\sqrt2,\sqrt3)$:
\begin{center}\resizebox{\linewidth}{!}{{\scriptsize\setlength{\tabcolsep}{4pt}
\begin{tabular}{lll}
\texttt{000}\,$(\frac{\sqrt{6} + 2 \sqrt{3}}{6},\frac{\sqrt{6} + 2 \sqrt{3}}{6})$ & \texttt{100}\,$(\frac{- \sqrt{3} + \sqrt{6}}{6},\frac{1}{2} + \frac{\sqrt{6} + 2 \sqrt{3}}{6})$ & \texttt{200}\,$(\frac{- \sqrt{3} + \sqrt{6}}{6},- \frac{1}{2} + \frac{\sqrt{6} + 2 \sqrt{3}}{6})$ \\
\texttt{001}\,$(- \frac{1}{2} + \frac{\sqrt{6} + 2 \sqrt{3}}{6},\frac{- \sqrt{3} + \sqrt{6}}{6})$ & \texttt{101}\,$(- \frac{1}{2} + \frac{- \sqrt{3} + \sqrt{6}}{6},\frac{- \sqrt{3} + \sqrt{6}}{6} + \frac{1}{2})$ & \texttt{201}\,$(- \frac{1}{2} + \frac{- \sqrt{3} + \sqrt{6}}{6},- \frac{1}{2} + \frac{- \sqrt{3} + \sqrt{6}}{6})$ \\
\texttt{002}\,$(\frac{1}{2} + \frac{\sqrt{6} + 2 \sqrt{3}}{6},\frac{- \sqrt{3} + \sqrt{6}}{6})$ & \texttt{102}\,$(\frac{- \sqrt{3} + \sqrt{6}}{6} + \frac{1}{2},\frac{- \sqrt{3} + \sqrt{6}}{6} + \frac{1}{2})$ & \texttt{202}\,$(\frac{- \sqrt{3} + \sqrt{6}}{6} + \frac{1}{2},- \frac{1}{2} + \frac{- \sqrt{3} + \sqrt{6}}{6})$ \\
\texttt{010}\,$(\frac{\sqrt{3} \left(- \sqrt{6} - \sqrt{2} + 4\right)}{12},\frac{\sqrt{3} \left(- \sqrt{2} + \sqrt{6} + 4\right)}{12})$ & \texttt{110}\,$(\frac{\sqrt{3} \left(- \sqrt{6} - 2 - \sqrt{2}\right)}{12},\frac{1}{2} + \frac{- \sqrt{6} + 3 \sqrt{2} + 4 \sqrt{3}}{12})$ & \texttt{210}\,$(\frac{\sqrt{3} \left(- \sqrt{6} - 2 - \sqrt{2}\right)}{12},- \frac{1}{2} + \frac{- \sqrt{6} + 3 \sqrt{2} + 4 \sqrt{3}}{12})$ \\
\texttt{011}\,$(- \frac{1}{2} + \frac{- 3 \sqrt{2} - \sqrt{6} + 4 \sqrt{3}}{12},\frac{\sqrt{3} \left(-2 - \sqrt{2} + \sqrt{6}\right)}{12})$ & \texttt{111}\,$(\frac{- 3 \sqrt{2} - 2 \sqrt{3} - \sqrt{6}}{12} - \frac{1}{2},\frac{- 2 \sqrt{3} - \sqrt{6} + 3 \sqrt{2}}{12} + \frac{1}{2})$ & \texttt{211}\,$(\frac{- 3 \sqrt{2} - 2 \sqrt{3} - \sqrt{6}}{12} - \frac{1}{2},- \frac{1}{2} + \frac{- 2 \sqrt{3} - \sqrt{6} + 3 \sqrt{2}}{12})$ \\
\texttt{012}\,$(\frac{- 3 \sqrt{2} - \sqrt{6} + 4 \sqrt{3}}{12} + \frac{1}{2},\frac{\sqrt{3} \left(-2 - \sqrt{2} + \sqrt{6}\right)}{12})$ & \texttt{112}\,$(\frac{- 3 \sqrt{2} - 2 \sqrt{3} - \sqrt{6}}{12} + \frac{1}{2},\frac{- 2 \sqrt{3} - \sqrt{6} + 3 \sqrt{2}}{12} + \frac{1}{2})$ & \texttt{212}\,$(\frac{- 3 \sqrt{2} - 2 \sqrt{3} - \sqrt{6}}{12} + \frac{1}{2},- \frac{1}{2} + \frac{- 2 \sqrt{3} - \sqrt{6} + 3 \sqrt{2}}{12})$ \\
\texttt{020}\,$(\frac{\sqrt{3} \left(- \sqrt{2} + \sqrt{6} + 4\right)}{12},\frac{\sqrt{3} \left(- \sqrt{6} - \sqrt{2} + 4\right)}{12})$ & \texttt{120}\,$(\frac{\sqrt{3} \left(-2 - \sqrt{2} + \sqrt{6}\right)}{12},\frac{- 3 \sqrt{2} - \sqrt{6} + 4 \sqrt{3}}{12} + \frac{1}{2})$ & \texttt{220}\,$(\frac{\sqrt{3} \left(-2 - \sqrt{2} + \sqrt{6}\right)}{12},- \frac{1}{2} + \frac{- 3 \sqrt{2} - \sqrt{6} + 4 \sqrt{3}}{12})$ \\
\texttt{021}\,$(- \frac{1}{2} + \frac{- \sqrt{6} + 3 \sqrt{2} + 4 \sqrt{3}}{12},\frac{\sqrt{3} \left(- \sqrt{6} - 2 - \sqrt{2}\right)}{12})$ & \texttt{121}\,$(- \frac{1}{2} + \frac{- 2 \sqrt{3} - \sqrt{6} + 3 \sqrt{2}}{12},\frac{- 3 \sqrt{2} - 2 \sqrt{3} - \sqrt{6}}{12} + \frac{1}{2})$ & \texttt{221}\,$(- \frac{1}{2} + \frac{- 2 \sqrt{3} - \sqrt{6} + 3 \sqrt{2}}{12},\frac{- 3 \sqrt{2} - 2 \sqrt{3} - \sqrt{6}}{12} - \frac{1}{2})$ \\
\texttt{022}\,$(\frac{1}{2} + \frac{- \sqrt{6} + 3 \sqrt{2} + 4 \sqrt{3}}{12},\frac{\sqrt{3} \left(- \sqrt{6} - 2 - \sqrt{2}\right)}{12})$ & \texttt{122}\,$(\frac{- 2 \sqrt{3} - \sqrt{6} + 3 \sqrt{2}}{12} + \frac{1}{2},\frac{- 3 \sqrt{2} - 2 \sqrt{3} - \sqrt{6}}{12} + \frac{1}{2})$ & \texttt{222}\,$(\frac{- 2 \sqrt{3} - \sqrt{6} + 3 \sqrt{2}}{12} + \frac{1}{2},\frac{- 3 \sqrt{2} - 2 \sqrt{3} - \sqrt{6}}{12} - \frac{1}{2})$ \\
\end{tabular}}
}\end{center}

In the origami tier the coordinates are exact elements of $\Q(\sin20^\circ)$
(with $\cos20^\circ$ a root of $8x^3-6x-1$); in the exotic tier, exact elements
of $\Q(\cos\tfrac{2\pi}{11})$ (root of $32x^5+16x^4-32x^3-12x^2+6x+1$), obtained
by summing the corresponding generators above.

\section{In space}
Placing the three unit triangles in the three coordinate planes --- $u$ in $xy$,
$v$ in $yz$, $w$ in $zx$ --- and forming $P(a,b,c)=u_a+v_b+w_c\in\R^3$ gives a
realization that is \emph{automatically} unit (Proposition~\ref{prop:mink}) and,
by exact check, faithful: $27$ distinct vertices, $81$ unit edges, $0$ ghosts,
with all coordinates in $\Q(\sqrt3)$ (Figure~\ref{fig:space}, left). By
contrast, the naive placement of $abc$ at the integer point
$(a,b,c)\in\{0,1,2\}^3$ (the $3\times3\times3$ cube) is \emph{not} unit: each
$K_3$ also joins the endpoints $0$ and $2$ of a line, at distance $2$: it is a
perfectly valid straight-line drawing of $H(3,3)$ --- all $81$ edges are present
--- but \emph{not} a unit-distance one, since $54$ edges have length $1$ and the
$27$ endpoint edges have length $2$ (Figure~\ref{fig:cube}).

\begin{figure}[htbp]\centering
  \includegraphics[width=.67\linewidth]{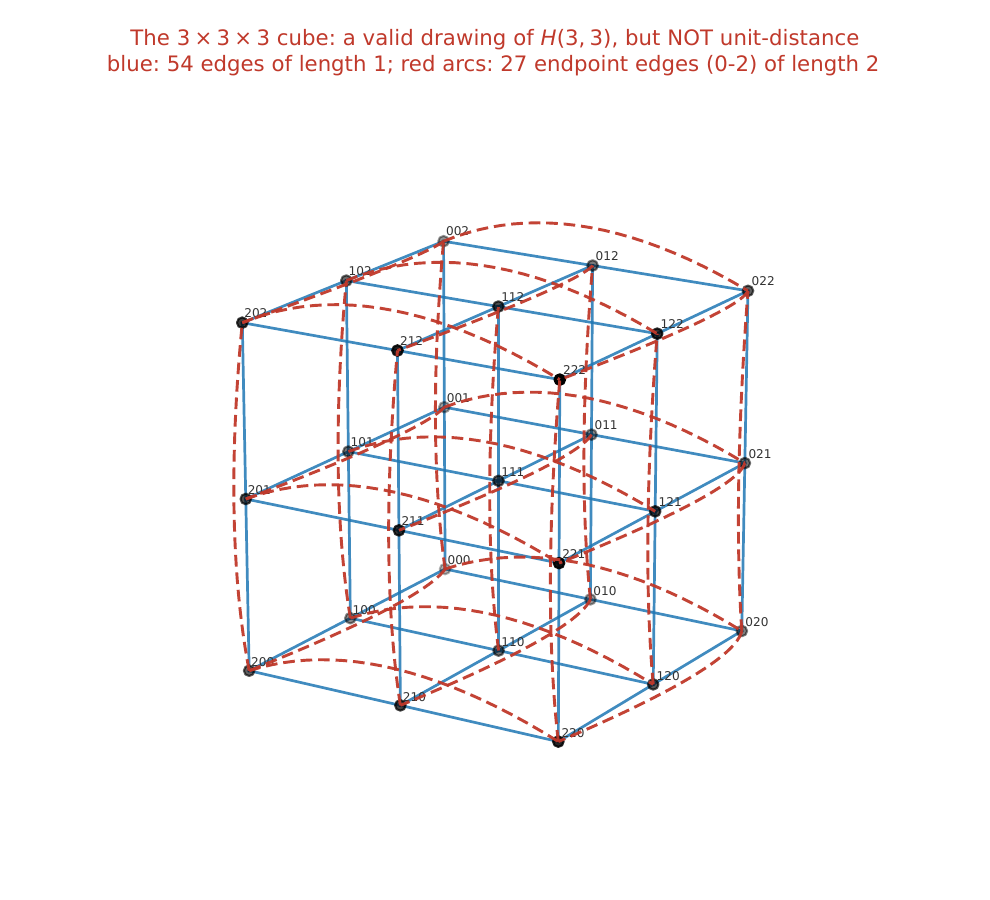}
  \caption{The $3\times3\times3$ integer cube is a valid drawing of $H(3,3)$
  (all $81$ edges present) but is \emph{not} unit-distance: the $54$ blue edges
  have length $1$, while the $27$ red endpoint edges $0$--$2$ (drawn as arcs, as
  they are collinear with the unit edges) have length $2$.}\label{fig:cube}
\end{figure}

\begin{figure}[htbp]\centering
  \includegraphics[width=.58\linewidth]{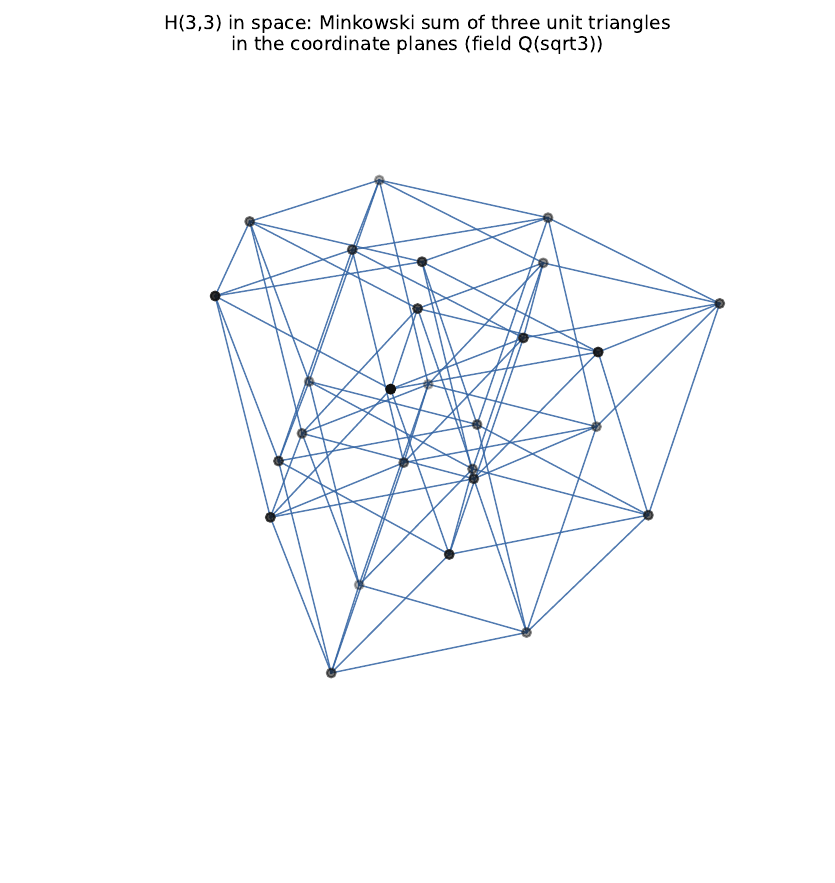}
  \caption{The coordinate-plane spatial realization of $H(3,3)$ in $\R^3$:
  $P(a,b,c)=u_a+v_b+w_c$ with $u,v,w$ three unit triangles in the $xy$, $yz$ and
  $zx$ planes. All $81$ edges are unit and the $27$ vertices are distinct
  (faithful), with coordinates in $\Q(\sqrt3)$.}\label{fig:space3d}
\end{figure}

A second spatial realization of $H(3,3)$ uses a \emph{tetrahedron}: place the
three unit triangles on three faces of a regular tetrahedron (normals along
$(1,1,1),(1,-1,-1),(-1,1,-1)$). Two sub-cases are worth keeping. In the
\emph{perfectly symmetric} position all coordinates are rational multiples of
$\sqrt6$, so the field is $\Q(\sqrt6)$ (degree $2$); the $81$ edges are unit, but
the symmetry produces $9$ \emph{ghosts} --- it is not faithful. Rotating one
triangle by the rational angle $(\cos,\sin)=(\tfrac45,\tfrac35)$ (a $3$--$4$--$5$
turn) removes the ghosts, giving a \emph{faithful} realization with $27$ distinct
vertices; the turn exposes $\sqrt2$ and $\sqrt3$ separately, so the field is
$\Q(\sqrt2,\sqrt3)$ (degree $4$, still ruler and compass), Figure~\ref{fig:tetra}.

\begin{figure}[htbp]\centering
  \includegraphics[width=.66\linewidth]{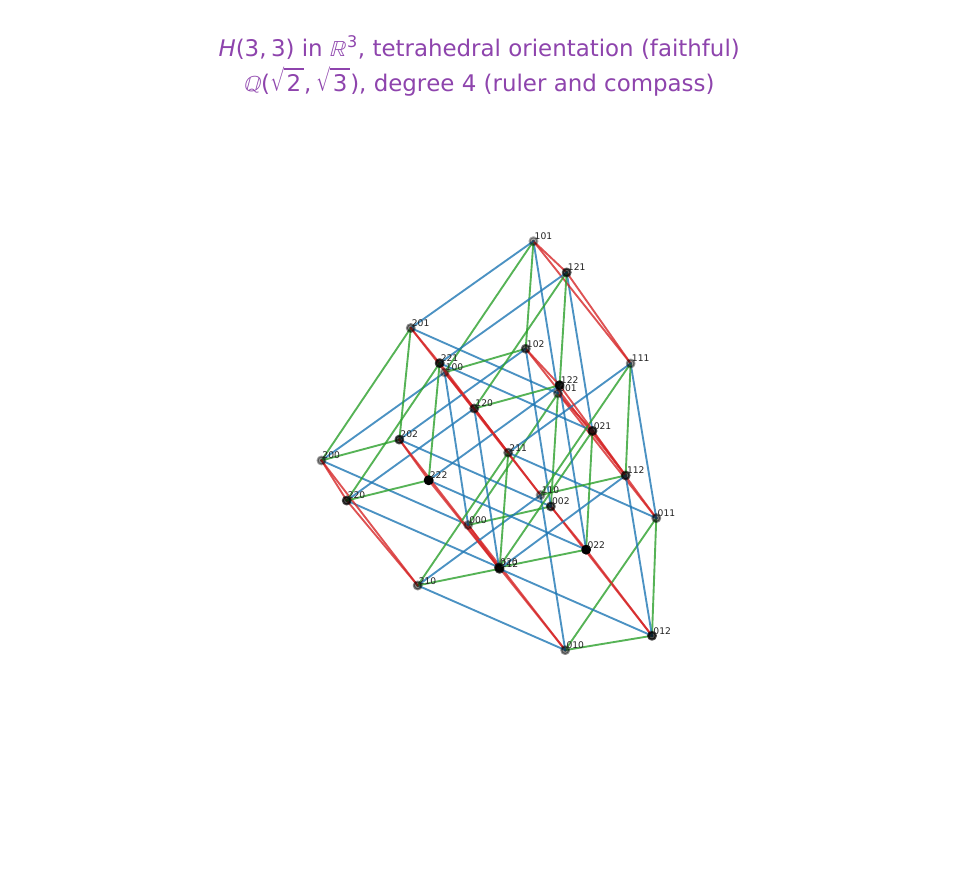}
  \caption{The faithful tetrahedral $27$-vertex realization of $H(3,3)$ in $\R^3$:
  three unit triangles on three faces of a regular tetrahedron, faithful after a
  rational $3$--$4$--$5$ turn; coordinate field $\Q(\sqrt2,\sqrt3)$, degree $4$
  (the symmetric position, without the turn, is $\Q(\sqrt6)$ but has $9$
  ghosts).}\label{fig:tetra}
\end{figure}

The tetrahedral construction is still flexible: keeping the same three
face-normals but rotating one triangle by a \emph{non-rational} angle lifts
$H(3,3)$ up the hierarchy \emph{in space}, just as in the plane. A $20^\circ$
turn gives a faithful realization whose coordinate field contains $\cos20^\circ$
(the origami cubic $8x^3-6x-1$) --- an \emph{origami} realization in $\R^3$
(prime support $\{2,3\}$, degree $12=2^2\cdot3$); a $\tfrac{2\pi}{11}$ turn gives
a faithful realization whose field contains $\cos\tfrac{2\pi}{11}$ (degree $5$)
--- \emph{exotic} in $\R^3$. All ghost counts are verified to $30$ digits. Thus
$H(3,3)$ reaches the full constructibility hierarchy in the plane \emph{and} in
space (Figure~\ref{fig:space}).

\begin{remark}[In space the flex grows, it does not stop]
The transcendental abundance of faithful realizations is not a planar accident that
$\R^3$ tames --- on the contrary, space \emph{amplifies} it. In $\R^3$ the graph is
again generically rigid (a random embedding has rigidity rank $3n-6=75$), but the
Minkowski realization, now free to orient each of the three unit triangles
independently, sits on a much larger flex: its rigidity matrix has rank $66$, an
internal flex of dimension $75-66=9$ (against $2$ in the plane), verified across
random orientations. So the faithful spatial realizations form a nine-parameter
continuum, still generically transcendental; faithfulness remains an open condition
over $\R$, and the tiers still live only on the algebraic sublocus. Passing from the
plane to space enlarges the transcendental part of the faithful family rather than
reducing it.
\end{remark}

\begin{figure}[t]\centering
  \includegraphics[width=.72\linewidth]{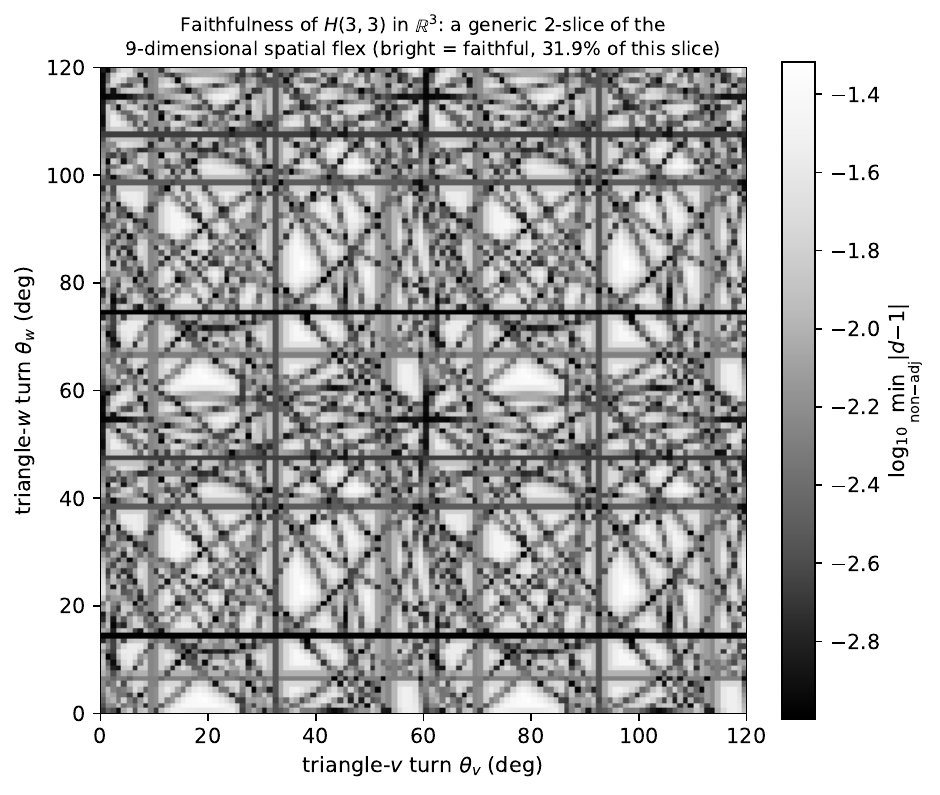}
  \caption{Faithfulness of $H(3,3)$ in $\R^3$: a generic two-parameter slice of the
  nine-dimensional spatial flex (turning two of the three unit triangles about fixed
  generic axes). Bright is robustly faithful (about $32\%$ of the sampled cells at the
  plotted tolerance); as in the plane, faithfulness fails only on a measure-zero set of
  ghost surfaces. The full spatial
  family of faithful realizations is a nine-parameter continuum --- vastly larger
  than the planar two-parameter family --- and still generically transcendental. The
  faithfulness diagram of the plane (Figure~\ref{fig:faithful}) is thus one
  two-dimensional shadow of a much higher-dimensional faithful set.}
  \label{fig:faithful3d}
\end{figure}

\begin{figure}[t]\centering
  \includegraphics[width=.46\linewidth]{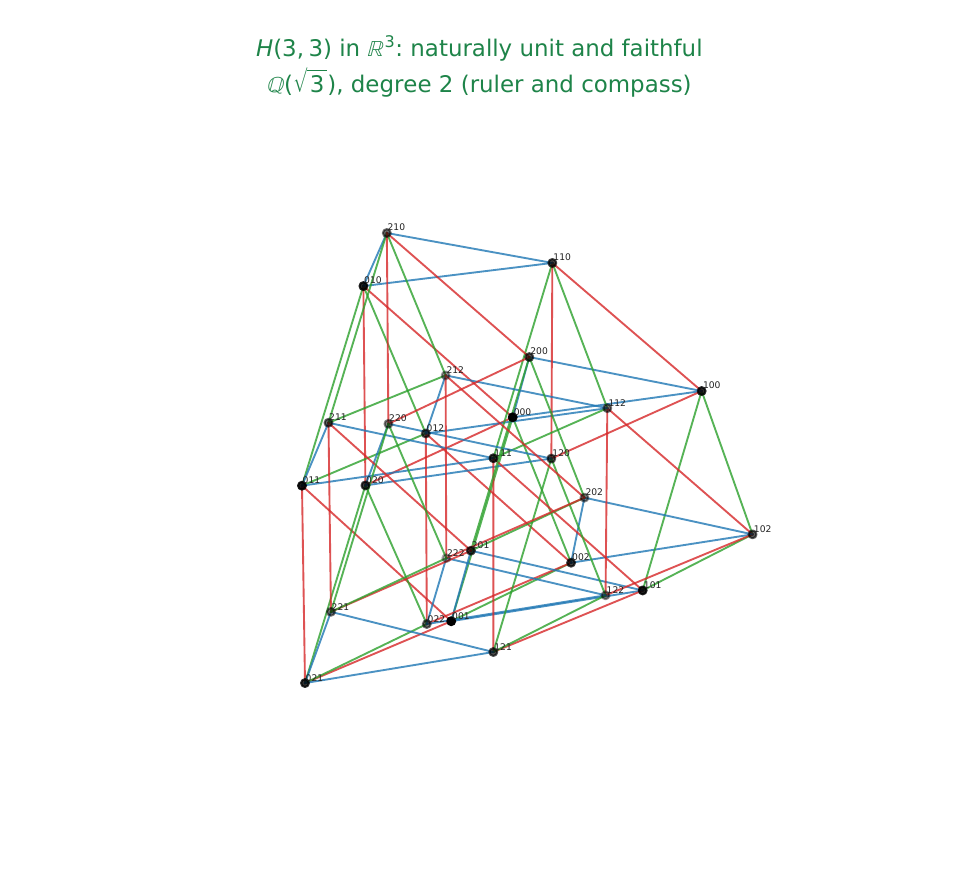}\hfill
  \includegraphics[width=.46\linewidth]{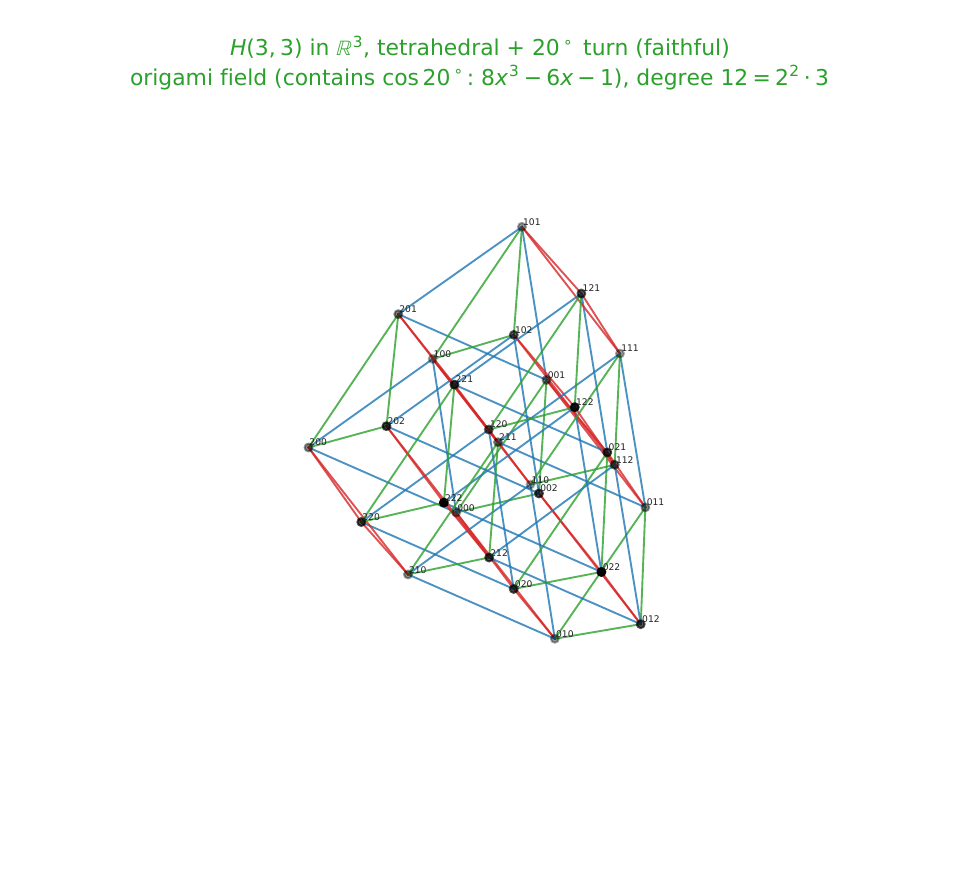}
  \caption{Left: $H(3,3)$ in $\R^3$ from the coordinate planes, naturally unit and
  faithful, coordinates in $\Q(\sqrt3)$ (compass). Right: the tetrahedral
  construction with a $20^\circ$ turn --- a faithful \emph{origami} realization of
  $H(3,3)$ in $\R^3$, field containing $\cos20^\circ$, degree $12=2^2\cdot3$.}
  \label{fig:space}
\end{figure}

We record the exact $\R^3$ coordinates of the three spatial cases. First, the
coordinate-plane realization, all in $\Q(\sqrt3)$ (faithful):
\begin{center}\resizebox{\linewidth}{!}{{\tiny
\setlength{\tabcolsep}{2pt}
\begin{tabular}{ll ll ll}
\texttt{000}&$(\frac{\sqrt{3}}{3},\frac{\sqrt{3}}{3},\frac{\sqrt{3}}{3})$ & \texttt{100}&$(- \frac{\sqrt{3}}{6},\frac{1}{2} + \frac{\sqrt{3}}{3},\frac{\sqrt{3}}{3})$ & \texttt{200}&$(- \frac{\sqrt{3}}{6},- \frac{1}{2} + \frac{\sqrt{3}}{3},\frac{\sqrt{3}}{3})$ \\
\texttt{001}&$(\frac{1}{2} + \frac{\sqrt{3}}{3},\frac{\sqrt{3}}{3},- \frac{\sqrt{3}}{6})$ & \texttt{101}&$(\frac{1}{2} - \frac{\sqrt{3}}{6},\frac{1}{2} + \frac{\sqrt{3}}{3},- \frac{\sqrt{3}}{6})$ & \texttt{201}&$(\frac{1}{2} - \frac{\sqrt{3}}{6},- \frac{1}{2} + \frac{\sqrt{3}}{3},- \frac{\sqrt{3}}{6})$ \\
\texttt{002}&$(- \frac{1}{2} + \frac{\sqrt{3}}{3},\frac{\sqrt{3}}{3},- \frac{\sqrt{3}}{6})$ & \texttt{102}&$(- \frac{1}{2} - \frac{\sqrt{3}}{6},\frac{1}{2} + \frac{\sqrt{3}}{3},- \frac{\sqrt{3}}{6})$ & \texttt{202}&$(- \frac{1}{2} - \frac{\sqrt{3}}{6},- \frac{1}{2} + \frac{\sqrt{3}}{3},- \frac{\sqrt{3}}{6})$ \\
\texttt{010}&$(\frac{\sqrt{3}}{3},- \frac{\sqrt{3}}{6},\frac{1}{2} + \frac{\sqrt{3}}{3})$ & \texttt{110}&$(- \frac{\sqrt{3}}{6},\frac{1}{2} - \frac{\sqrt{3}}{6},\frac{1}{2} + \frac{\sqrt{3}}{3})$ & \texttt{210}&$(- \frac{\sqrt{3}}{6},- \frac{1}{2} - \frac{\sqrt{3}}{6},\frac{1}{2} + \frac{\sqrt{3}}{3})$ \\
\texttt{011}&$(\frac{1}{2} + \frac{\sqrt{3}}{3},- \frac{\sqrt{3}}{6},\frac{1}{2} - \frac{\sqrt{3}}{6})$ & \texttt{111}&$(\frac{1}{2} - \frac{\sqrt{3}}{6},\frac{1}{2} - \frac{\sqrt{3}}{6},\frac{1}{2} - \frac{\sqrt{3}}{6})$ & \texttt{211}&$(\frac{1}{2} - \frac{\sqrt{3}}{6},- \frac{1}{2} - \frac{\sqrt{3}}{6},\frac{1}{2} - \frac{\sqrt{3}}{6})$ \\
\texttt{012}&$(- \frac{1}{2} + \frac{\sqrt{3}}{3},- \frac{\sqrt{3}}{6},\frac{1}{2} - \frac{\sqrt{3}}{6})$ & \texttt{112}&$(- \frac{1}{2} - \frac{\sqrt{3}}{6},\frac{1}{2} - \frac{\sqrt{3}}{6},\frac{1}{2} - \frac{\sqrt{3}}{6})$ & \texttt{212}&$(- \frac{1}{2} - \frac{\sqrt{3}}{6},- \frac{1}{2} - \frac{\sqrt{3}}{6},\frac{1}{2} - \frac{\sqrt{3}}{6})$ \\
\texttt{020}&$(\frac{\sqrt{3}}{3},- \frac{\sqrt{3}}{6},- \frac{1}{2} + \frac{\sqrt{3}}{3})$ & \texttt{120}&$(- \frac{\sqrt{3}}{6},\frac{1}{2} - \frac{\sqrt{3}}{6},- \frac{1}{2} + \frac{\sqrt{3}}{3})$ & \texttt{220}&$(- \frac{\sqrt{3}}{6},- \frac{1}{2} - \frac{\sqrt{3}}{6},- \frac{1}{2} + \frac{\sqrt{3}}{3})$ \\
\texttt{021}&$(\frac{1}{2} + \frac{\sqrt{3}}{3},- \frac{\sqrt{3}}{6},- \frac{1}{2} - \frac{\sqrt{3}}{6})$ & \texttt{121}&$(\frac{1}{2} - \frac{\sqrt{3}}{6},\frac{1}{2} - \frac{\sqrt{3}}{6},- \frac{1}{2} - \frac{\sqrt{3}}{6})$ & \texttt{221}&$(\frac{1}{2} - \frac{\sqrt{3}}{6},- \frac{1}{2} - \frac{\sqrt{3}}{6},- \frac{1}{2} - \frac{\sqrt{3}}{6})$ \\
\texttt{022}&$(- \frac{1}{2} + \frac{\sqrt{3}}{3},- \frac{\sqrt{3}}{6},- \frac{1}{2} - \frac{\sqrt{3}}{6})$ & \texttt{122}&$(- \frac{1}{2} - \frac{\sqrt{3}}{6},\frac{1}{2} - \frac{\sqrt{3}}{6},- \frac{1}{2} - \frac{\sqrt{3}}{6})$ & \texttt{222}&$(- \frac{1}{2} - \frac{\sqrt{3}}{6},- \frac{1}{2} - \frac{\sqrt{3}}{6},- \frac{1}{2} - \frac{\sqrt{3}}{6})$ \\
\end{tabular}}
}\end{center}
Next, the symmetric tetrahedral position, all in $\Q(\sqrt6)$ (degree $2$, but
with $9$ ghosts):
\begin{center}\resizebox{\linewidth}{!}{{\scriptsize\setlength{\tabcolsep}{3pt}
\begin{tabular}{ll ll ll}
\texttt{000}&$(0,- \frac{\sqrt{6}}{6},- \frac{\sqrt{6}}{6})$ & \texttt{100}&$(- \frac{\sqrt{6}}{6},- \frac{\sqrt{6}}{3},\frac{\sqrt{6}}{6})$ & \texttt{200}&$(\frac{\sqrt{6}}{6},- \frac{\sqrt{6}}{2},0)$ \\
\texttt{001}&$(- \frac{\sqrt{6}}{6},0,\frac{\sqrt{6}}{6})$ & \texttt{101}&$(- \frac{\sqrt{6}}{3},- \frac{\sqrt{6}}{6},\frac{\sqrt{6}}{2})$ & \texttt{201}&$(0,- \frac{\sqrt{6}}{3},\frac{\sqrt{6}}{3})$ \\
\texttt{002}&$(\frac{\sqrt{6}}{6},\frac{\sqrt{6}}{6},0)$ & \texttt{102}&$(0,0,\frac{\sqrt{6}}{3})$ & \texttt{202}&$(\frac{\sqrt{6}}{3},- \frac{\sqrt{6}}{6},\frac{\sqrt{6}}{6})$ \\
\texttt{010}&$(- \frac{\sqrt{6}}{6},0,- \frac{\sqrt{6}}{2})$ & \texttt{110}&$(- \frac{\sqrt{6}}{3},- \frac{\sqrt{6}}{6},- \frac{\sqrt{6}}{6})$ & \texttt{210}&$(0,- \frac{\sqrt{6}}{3},- \frac{\sqrt{6}}{3})$ \\
\texttt{011}&$(- \frac{\sqrt{6}}{3},\frac{\sqrt{6}}{6},- \frac{\sqrt{6}}{6})$ & \texttt{111}&$(- \frac{\sqrt{6}}{2},0,\frac{\sqrt{6}}{6})$ & \texttt{211}&$(- \frac{\sqrt{6}}{6},- \frac{\sqrt{6}}{6},0)$ \\
\texttt{012}&$(0,\frac{\sqrt{6}}{3},- \frac{\sqrt{6}}{3})$ & \texttt{112}&$(- \frac{\sqrt{6}}{6},\frac{\sqrt{6}}{6},0)$ & \texttt{212}&$(\frac{\sqrt{6}}{6},0,- \frac{\sqrt{6}}{6})$ \\
\texttt{020}&$(\frac{\sqrt{6}}{6},\frac{\sqrt{6}}{6},- \frac{\sqrt{6}}{3})$ & \texttt{120}&$(0,0,0)$ & \texttt{220}&$(\frac{\sqrt{6}}{3},- \frac{\sqrt{6}}{6},- \frac{\sqrt{6}}{6})$ \\
\texttt{021}&$(0,\frac{\sqrt{6}}{3},0)$ & \texttt{121}&$(- \frac{\sqrt{6}}{6},\frac{\sqrt{6}}{6},\frac{\sqrt{6}}{3})$ & \texttt{221}&$(\frac{\sqrt{6}}{6},0,\frac{\sqrt{6}}{6})$ \\
\texttt{022}&$(\frac{\sqrt{6}}{3},\frac{\sqrt{6}}{2},- \frac{\sqrt{6}}{6})$ & \texttt{122}&$(\frac{\sqrt{6}}{6},\frac{\sqrt{6}}{3},\frac{\sqrt{6}}{6})$ & \texttt{222}&$(\frac{\sqrt{6}}{2},\frac{\sqrt{6}}{6},0)$ \\
\end{tabular}}
}\end{center}
Finally, the faithful tetrahedral realization after the $3$--$4$--$5$ turn, all
in $\Q(\sqrt2,\sqrt3)$ (degree $4$, faithful):
\begin{center}\resizebox{\linewidth}{!}{{\tiny\setlength{\tabcolsep}{2pt}
\begin{tabular}{ll ll ll}
\texttt{000}&$(- \frac{\sqrt{2}}{5},\frac{- 4 \sqrt{6} - 3 \sqrt{2}}{30},\frac{- 2 \sqrt{6} - \sqrt{2}}{10})$ & \texttt{100}&$(\frac{- 5 \sqrt{6} - 6 \sqrt{2}}{30},\frac{- 3 \sqrt{6} - \sqrt{2}}{10},\frac{- 3 \sqrt{2} + 4 \sqrt{6}}{30})$ & \texttt{200}&$(\frac{- 6 \sqrt{2} + 5 \sqrt{6}}{30},\frac{- 14 \sqrt{6} - 3 \sqrt{2}}{30},\frac{- 3 \sqrt{2} - \sqrt{6}}{30})$ \\
\texttt{001}&$(\frac{- 5 \sqrt{6} - 6 \sqrt{2}}{30},\frac{- 3 \sqrt{2} + \sqrt{6}}{30},\frac{- 3 \sqrt{2} + 4 \sqrt{6}}{30})$ & \texttt{101}&$(\frac{- 5 \sqrt{6} - 3 \sqrt{2}}{15},\frac{- 4 \sqrt{6} - 3 \sqrt{2}}{30},\frac{- 3 \sqrt{2} + 14 \sqrt{6}}{30})$ & \texttt{201}&$(- \frac{\sqrt{2}}{5},\frac{- 3 \sqrt{6} - \sqrt{2}}{10},\frac{- \sqrt{2} + 3 \sqrt{6}}{10})$ \\
\texttt{002}&$(\frac{- 6 \sqrt{2} + 5 \sqrt{6}}{30},\frac{- \sqrt{2} + 2 \sqrt{6}}{10},\frac{- 3 \sqrt{2} - \sqrt{6}}{30})$ & \texttt{102}&$(- \frac{\sqrt{2}}{5},\frac{- 3 \sqrt{2} + \sqrt{6}}{30},\frac{- \sqrt{2} + 3 \sqrt{6}}{10})$ & \texttt{202}&$(\frac{- 3 \sqrt{2} + 5 \sqrt{6}}{15},\frac{- 4 \sqrt{6} - 3 \sqrt{2}}{30},\frac{- 3 \sqrt{2} + 4 \sqrt{6}}{30})$ \\
\texttt{010}&$(\frac{\sqrt{2} \left(3 - 4 \sqrt{3}\right)}{30},\frac{\sqrt{2}}{5},\frac{- 14 \sqrt{6} - 3 \sqrt{2}}{30})$ & \texttt{110}&$(\frac{- 3 \sqrt{6} + \sqrt{2}}{10},\frac{- 5 \sqrt{6} + 6 \sqrt{2}}{30},\frac{- 4 \sqrt{6} - 3 \sqrt{2}}{30})$ & \texttt{210}&$(\frac{\sqrt{6} + 3 \sqrt{2}}{30},\frac{- 5 \sqrt{6} + 3 \sqrt{2}}{15},\frac{- 3 \sqrt{6} - \sqrt{2}}{10})$ \\
\texttt{011}&$(\frac{- 3 \sqrt{6} + \sqrt{2}}{10},\frac{6 \sqrt{2} + 5 \sqrt{6}}{30},\frac{- 4 \sqrt{6} - 3 \sqrt{2}}{30})$ & \texttt{111}&$(\frac{- 14 \sqrt{6} + 3 \sqrt{2}}{30},\frac{\sqrt{2}}{5},\frac{- \sqrt{2} + 2 \sqrt{6}}{10})$ & \texttt{211}&$(\frac{\sqrt{2} \left(3 - 4 \sqrt{3}\right)}{30},\frac{- 5 \sqrt{6} + 6 \sqrt{2}}{30},\frac{- 3 \sqrt{2} + \sqrt{6}}{30})$ \\
\texttt{012}&$(\frac{\sqrt{6} + 3 \sqrt{2}}{30},\frac{3 \sqrt{2} + 5 \sqrt{6}}{15},\frac{- 3 \sqrt{6} - \sqrt{2}}{10})$ & \texttt{112}&$(\frac{\sqrt{2} \left(3 - 4 \sqrt{3}\right)}{30},\frac{6 \sqrt{2} + 5 \sqrt{6}}{30},\frac{- 3 \sqrt{2} + \sqrt{6}}{30})$ & \texttt{212}&$(\frac{\sqrt{2} + 2 \sqrt{6}}{10},\frac{\sqrt{2}}{5},\frac{- 4 \sqrt{6} - 3 \sqrt{2}}{30})$ \\
\texttt{020}&$(\frac{\sqrt{2} \left(3 + 4 \sqrt{3}\right)}{30},\frac{- 3 \sqrt{2} + 4 \sqrt{6}}{30},\frac{- 5 \sqrt{6} + 3 \sqrt{2}}{15})$ & \texttt{120}&$(\frac{- \sqrt{6} + 3 \sqrt{2}}{30},\frac{- 3 \sqrt{2} - \sqrt{6}}{30},\frac{\sqrt{2}}{5})$ & \texttt{220}&$(\frac{\sqrt{2} + 3 \sqrt{6}}{10},\frac{- 2 \sqrt{6} - \sqrt{2}}{10},\frac{- 5 \sqrt{6} + 6 \sqrt{2}}{30})$ \\
\texttt{021}&$(\frac{- \sqrt{6} + 3 \sqrt{2}}{30},\frac{- \sqrt{2} + 3 \sqrt{6}}{10},\frac{\sqrt{2}}{5})$ & \texttt{121}&$(\frac{- 2 \sqrt{6} + \sqrt{2}}{10},\frac{- 3 \sqrt{2} + 4 \sqrt{6}}{30},\frac{3 \sqrt{2} + 5 \sqrt{6}}{15})$ & \texttt{221}&$(\frac{\sqrt{2} \left(3 + 4 \sqrt{3}\right)}{30},\frac{- 3 \sqrt{2} - \sqrt{6}}{30},\frac{6 \sqrt{2} + 5 \sqrt{6}}{30})$ \\
\texttt{022}&$(\frac{\sqrt{2} + 3 \sqrt{6}}{10},\frac{- 3 \sqrt{2} + 14 \sqrt{6}}{30},\frac{- 5 \sqrt{6} + 6 \sqrt{2}}{30})$ & \texttt{122}&$(\frac{\sqrt{2} \left(3 + 4 \sqrt{3}\right)}{30},\frac{- \sqrt{2} + 3 \sqrt{6}}{10},\frac{6 \sqrt{2} + 5 \sqrt{6}}{30})$ & \texttt{222}&$(\frac{3 \sqrt{2} + 14 \sqrt{6}}{30},\frac{- 3 \sqrt{2} + 4 \sqrt{6}}{30},\frac{\sqrt{2}}{5})$ \\
\end{tabular}}
}\end{center}

As an alternative view, Figure~\ref{fig:lattice} places each spatial realization
of $H(3,3)$ inside a reference lattice, which makes the three-dimensional
structure easier to read.

\begin{figure}[htbp]\centering
  \includegraphics[width=.33\linewidth]{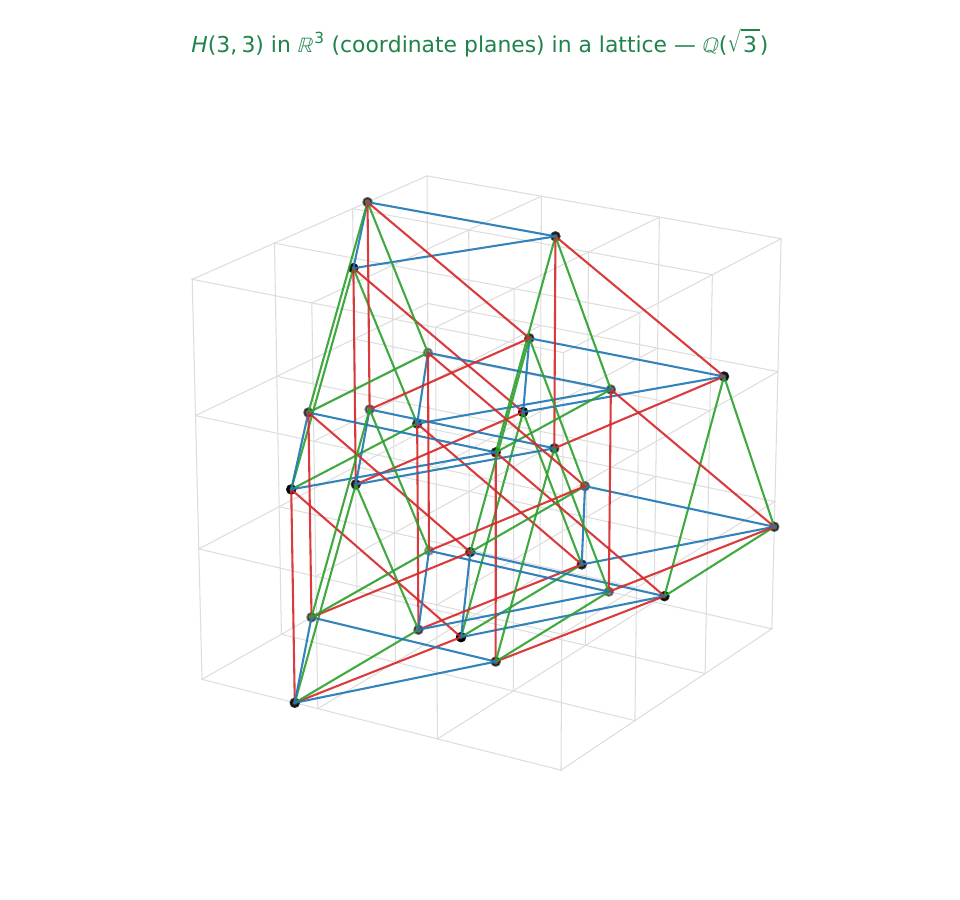}\hfill
  \includegraphics[width=.33\linewidth]{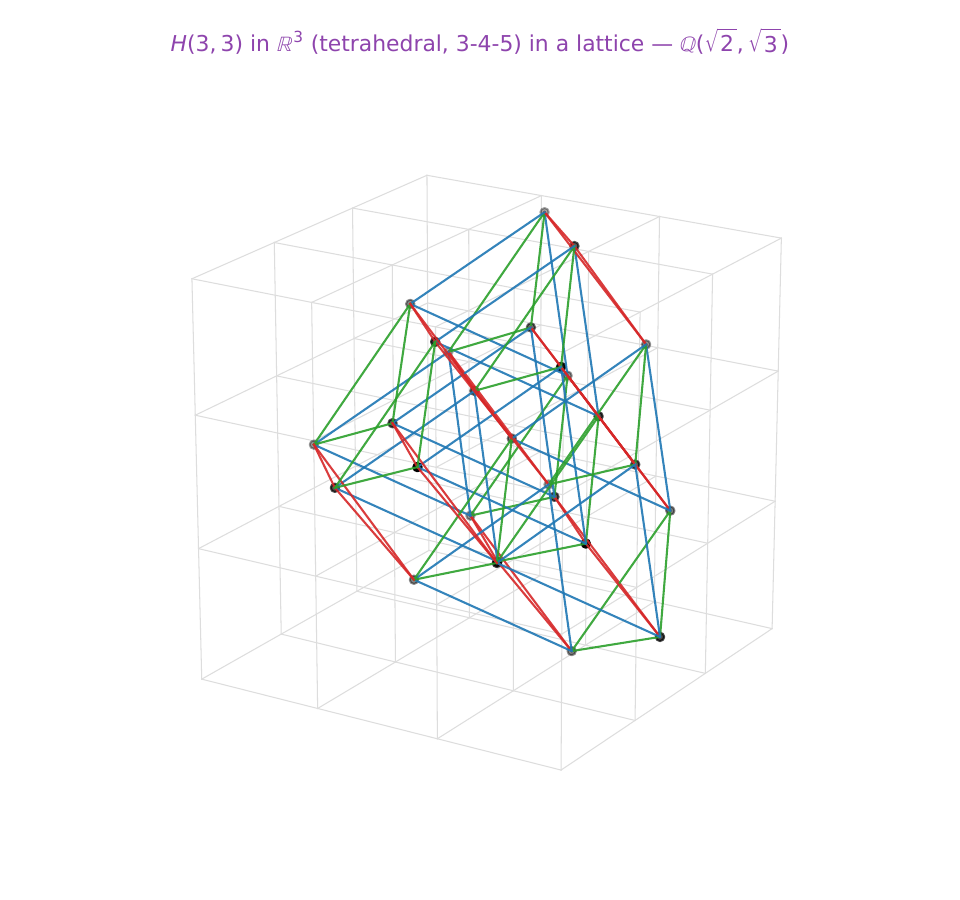}\hfill
  \includegraphics[width=.33\linewidth]{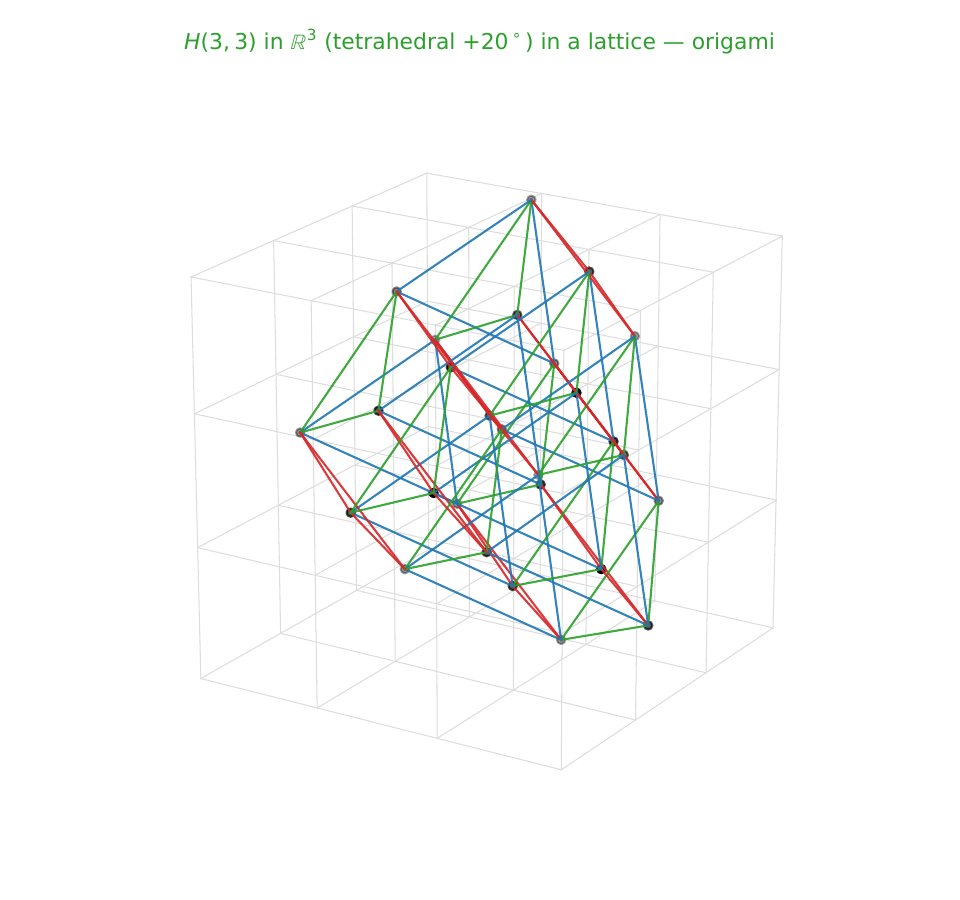}
  \caption{The three spatial realizations of $H(3,3)$ shown inside a reference
  lattice: coordinate planes ($\Q(\sqrt3)$, compass), tetrahedral $3$--$4$--$5$
  ($\Q(\sqrt2,\sqrt3)$, compass), and tetrahedral $+20^\circ$ (origami).}
  \label{fig:lattice}
\end{figure}

\section{The family $H(d,3)$: a $(d-1)$-parameter flex}
The Minkowski construction of Section~2 is not special to $d=3$. For every $d$,
\[
  H(d,3)\;=\;\underbrace{K_3\bx K_3\bx\cdots\bx K_3}_{d\ \text{factors}}\;=\;K_3^{\,\mathbin{\square} d},
  \qquad V=\{0,1,2\}^d,
\]
two words adjacent iff they differ in exactly one coordinate, is a \emph{planar}
unit-distance graph: place the vertex $(a_1,\dots,a_d)$ at
\begin{equation}\label{eq:immersion}
  P(a_1,\dots,a_d)\;=\;\sum_{i=1}^{d} T_i[a_i],
\end{equation}
where $T_1,\dots,T_d$ are unit equilateral triangles (circumradius $1/\sqrt3$) in
independently chosen orientations. Every edge changes one coordinate, so it is a
translate of a triangle side and has length one; for generic orientations the $3^d$
points are distinct with no ghost unit distance, so the drawing is faithful.

\begin{theorem}\label{thm:edim}
For $q\ge3$ the Euclidean dimension of the Hamming graph is $\operatorname{edim}(H(d,q))=q-1$:
its least faithful unit-distance realization lives in $\R^{q-1}$, as the Minkowski sum of $d$
regular unit $(q-1)$-simplices in generic position. In particular $q=3$ is exactly the case that
stays in the plane. (The binary case $q=2$ is the exception: $H(d,2)=Q_d$ is the hypercube, with
$\operatorname{edim}(H(d,2))=2$ for every $d\ge2$ --- e.g.\ $H(2,2)=C_4$ needs the plane --- since
$d$ unit $1$-simplices cannot be placed in general position on the line.)
\end{theorem}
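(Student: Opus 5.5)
The plan is to prove matching lower and upper bounds, with the upper bound coming from the construction~\eqref{eq:immersion} generalised from triangles to simplices.

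\emph{Lower bound.} Every vertex of $H(d,q)$ lies in a clique $K_q$: fix all coordinates but one. In any unit-distance realization the $q$ images of this clique are pairwise at distance $1$. Hence they are the vertices of a regular unit $(q-1)$-simplex, whose affine hull has dimension $q-1$. So no realization, faithful or not, exists in $\R^{q-2}$, and $\operatorname{edim}(H(d,q))\ge q-1$. For $q=2$ this argument only gives dimension $1$. Instead we rule out the line directly for $C_4=H(2,2)$, which is an induced subgraph of $Q_d$ for every $d\ge2$. Write the cycle as $a,b,c,d$. On the line, $b$ and $d$ are distinct neighbours of $a$, so $\{b,d\}=\{a-1,a+1\}$. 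Then $c$, being at distance $1$ from both, is forced to equal $a$, which contradicts injectivity.

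\emph{Upper bound by the Minkowski sum.} Let $S_1,\dots,S_d$ be regular unit $(q-1)$-simplices in $\R^{q-1}$, each obtained from a fixed one by a rotation $g_i\in SO(q-1)$. Put $P(a)=\sum_i S_i[a_i]$. Exactly as in Proposition~\ref{prop:mink}, every edge is a translate of a simplex edge, so all edges are unit identically in $g=(g_1,\dots,g_d)$.

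It remains to show that faithfulness holds for generic $g$. Take a non-adjacent pair $x\ne y$, differing in a set $I$ of $k\ge2$ coordinates. Then
\[
  f_{xy}(g)=\Bigl\|\sum_{i\in I} g_i\bigl(S[x_i]-S[y_i]\bigr)\Bigr\|^2 ,
\]
a real-analytic (indeed polynomial) function on the connected manifold $SO(q-1)^d$. A ghost or a collision occurs only where $f_{xy}\in\{0,1\}$. Because $SO(q-1)$ acts transitively on unit vectors when $q-1\ge2$, the rotations $g_i$ can be chosen independently so that all $k$ unit vectors $g_i(S[x_i]-S[y_i])$ coincide. At that point $f_{xy}=k^2\ge4$, so neither $f_{xy}-1$ nor $f_{xy}$ vanishes identically. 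Their zero sets are therefore closed and of measure zero, and the finite union over all pairs leaves an open dense set of full measure of faithful $g$. This is the same argument as Proposition~\ref{prop:faithful-transc}(i). For $q=2$ the same argument runs in $\R^2$ with unit segments rotated by $SO(2)$, giving $\operatorname{edim}(Q_d)\le2$. In $\R^1$ it breaks down exactly because $SO(1)$ is trivial, which is the stated exception. Setting $q=3$ recovers the planar family of this section.

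\emph{Expected main obstacle.} The only delicate point is certifying that the ghost functions are not identically $1$ (or $0$). Analyticity alone gives nothing without a single witness configuration for each pair. The collinear-alignment witness above is my first choice because it handles every pair uniformly. If one insists on a witness that is simultaneously good for all pairs, I would fall back on the explicit faithful points already verified (for example the compass point for $q=3$). I would also make sure the definition of $\operatorname{edim}$ used is the faithful one, which is the notion both bounds address.
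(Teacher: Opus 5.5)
Your proposal is correct and follows the paper's proof. The lower bound comes from a $K_q$ fibre forcing a regular unit $(q-1)$-simplex, and the upper bound from the Minkowski sum of generically rotated unit simplices. The paper only asserts that genericity rules out collisions and ghosts, and it leaves the binary exception unproved. Your per-pair alignment witness $f_{xy}=k^2\ge4$ on the connected manifold $SO(q-1)^d$, and your $C_4$-on-a-line argument, supply exactly that missing justification.
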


\begin{proof}
Take $q\ge3$. Each factor $K_q$ is faithful in $\R^{q-1}$ as a regular unit simplex and no lower,
so $\operatorname{edim}(H(d,q))\ge q-1$. Conversely, choosing $d$ regular unit simplices
$T_1,\dots,T_d\subset\R^{q-1}$ in generic orientations (which exist since $q-1\ge2$) and placing
vertex $a$ at $\sum_i T_i[a_i]$ makes every edge a translate of a simplex edge, hence unit;
genericity keeps the $q^{d}$ points distinct with no extra unit distance, so the realization is
faithful in $\R^{q-1}$. Thus $\operatorname{edim}(H(d,q))=q-1$.
\end{proof}

\paragraph{The immersion of $H(4,3)$ explicitly.} Here
$H(4,3)=K_3\bx K_3\bx K_3\bx K_3$ has $81$ vertices, is $8$-regular (degree $2d=8$) with $324$ edges,
and Figure~\ref{fig:h43} realizes \eqref{eq:immersion} with the four unit triangles
(twists $0,\,0.5,\,1.1,\,1.9$ rad)
\[
\begin{aligned}
T_1&=\{(0.577,0),(-0.289,0.5),(-0.289,-0.5)\},\\
T_2&=\{(0.507,0.277),(-0.493,0.300),(-0.014,-0.577)\},\\
T_3&=\{(0.262,0.514),(-0.577,-0.031),(0.315,-0.484)\},\\
T_4&=\{(-0.187,0.546),(-0.380,-0.435),(0.566,-0.112)\},
\end{aligned}
\]
so, e.g., $P(0,0,0,0)=(1.159,1.338)$ and $|P(0,0,0,0)-P(1,0,0,0)|=1$; all $324$ edges
are unit and the realization is faithful (verified in exact arithmetic).

\begin{figure}[t]\centering
  \includegraphics[width=.55\linewidth]{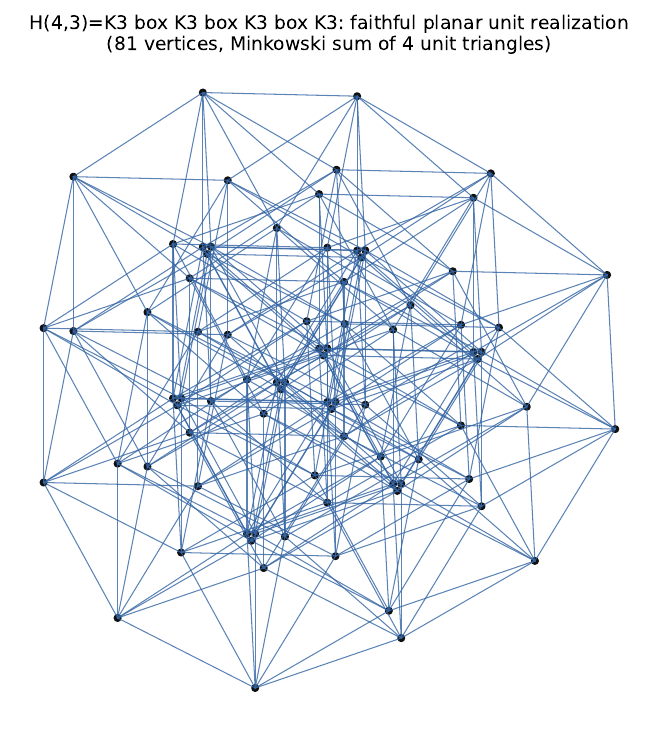}
  \caption{A faithful planar unit-distance realization of $H(4,3)=K_3^{\,\mathbin{\square}4}$
  ($81$ vertices, $324$ edges) as a Minkowski sum of four unit triangles.}
  \label{fig:h43}
\end{figure}

The flexibility of $H(3,3)$ extends to the whole family, not only $d=3$.

\begin{theorem}\label{prop:flex}
At a generic Minkowski unit realization, $H(d,3)$ has internal infinitesimal flexibility of
dimension exactly $d-1$; the flexes are the independent relative rotations of the $d$
triangles, and the rigidity matrix has rank exactly $2n-3-(d-1)$ ($n=3^d$).
\end{theorem}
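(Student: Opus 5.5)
The plan is to compute the kernel of the rigidity matrix directly, using the fibre structure of $K_3^{\bx d}$. I expect to show that, at every Minkowski realization \eqref{eq:immersion} with nondegenerate triangles $T_1,\dots,T_d$, the space of infinitesimal motions has dimension exactly $d+2$. That is two translations, plus one rotation rate for each coordinate direction. Since the $3$ trivial motions (two translations and the global rotation) sit inside it, the internal flex space has dimension $d-1$, and the rank is $2n-(d+2)=2n-3-(d-1)$. Genericity only matters for faithfulness, which is already given by Theorem~\ref{thm:edim}; the rank count itself should hold for every nondegenerate choice of triangles.

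Let $v:V\to\R^2$ be an infinitesimal motion, so $(P(x)-P(y))\cdot(v_x-v_y)=0$ for every edge $xy$. Write $J$ for rotation by $90^\circ$.

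\emph{Step 1 (fibres are rigid).} Fix a direction $i$ and a word $x$. The fibre $\{x: x_i=0,1,2\}$ is a $K_3$ realized as a translate $A+T_i$ of a unit triangle. A triangle is infinitesimally rigid, so on this fibre $v=\tau+\omega\,J\,T_i[k]$ for some translation $\tau$ and rotation rate $\omega$, both depending on the fibre.

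\emph{Step 2 (the rotation rate is constant along a direction).} This is the key step. Take two direction-$i$ fibres whose base words differ in one coordinate $j\neq i$. Their vertex sets are $A+T_i[k]$ and $A+t+T_i[k]$, where $t$ is a fixed side vector of $T_j$. They are joined by the three parallel edges with vector $t$, and each of these edges gives the constraint
\[
t\cdot(\tau'-\tau)+(\omega'-\omega)\,t\cdot J\,T_i[k]=0,\qquad k=0,1,2 .
\]
The three points $T_i[k]$ are affinely independent and $t\neq0$, so $k\mapsto t\cdot JT_i[k]$ is not constant. Subtracting two of the equations therefore forces $\omega'=\omega$. The direction-$i$ fibres are connected under changes of the other coordinates, so every direction-$i$ fibre carries the same rate $\omega_i$.

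\emph{Step 3 (the motion is the explicit family plus a translation).} Define $u_x=\sum_i\omega_i\,J\,T_i[x_i]$. This is exactly the superposition of the relative rotations of the triangles, and it is an infinitesimal motion: each edge in direction $i$ has vector $T_i[a']-T_i[a]$, and its velocity difference is $\omega_i J$ applied to that vector, which is orthogonal to it. The residual $w=v-u$ is again a motion, and on every fibre its rotation rate is zero. So $w$ is constant on each fibre. Fibres overlap and $H(d,3)$ is connected, so $w$ is a single global translation. The kernel is therefore $\{u(\omega)+\tau\}$.

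\emph{Step 4 (the kernel has dimension exactly $d+2$).} It remains to check that $(\omega,\tau)\mapsto u(\omega)+\tau$ is injective. If $u(\omega)+\tau\equiv0$, vary only $x_i$: this gives $\omega_i J(T_i[k]-T_i[0])=0$, so $\omega_i=0$ for every $i$, and then $\tau=0$. The kernel thus has dimension $d+2$. The global rotation corresponds to $\omega_1=\dots=\omega_d$, so the internal flexes are the relative rotations, of dimension $d-1$, as claimed.

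Step 2 is where the argument could break, because it is the only place that uses the fact that a single coordinate step translates all three vertices of a fibre by the same vector. I expect it to go through as sketched above.
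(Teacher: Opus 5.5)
Your proof is correct. Its skeleton is the paper's: each $K_3$ fibre is rigid, there is a single rotation rate $\omega_i$ per direction, you subtract $\sum_i\omega_i J\,T_i[x_i]$, and the residual is a global translation.

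The difference is in how you show the rotation rate is constant along a direction. The paper writes $v$ in both its $j$-form and its $k$-form and equates the two expressions for the mixed difference $\Delta_j\Delta_k v$. It then kills $\Delta_k\theta_j$ by assuming the edge vectors of $T_j$ and $T_k$ are non-parallel; that is where its genericity hypothesis enters. You instead use the three parallel direction-$j$ edges joining two adjacent direction-$i$ fibres. Their constraints differ only through $t\cdot J\,T_i[k]$, so the nondegeneracy of the single triangle $T_i$ forces $\omega'=\omega$.

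This gives you a stronger statement. The kernel has dimension $d+2$ for every choice of nondegenerate triangles, including twists differing by a multiple of $60^\circ$ (such as the diagonal $\alpha_v=\alpha_w$). There every edge of one triangle is parallel to an edge of the other, so the paper's hypothesis fails. Your argument also never uses injectivity of the realization. Your Step~4 proves the independence of the $d+2$ motions explicitly, which the paper only asserts in its lower-bound paragraph.

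The paper's genericity is in fact removable as well. For a fixed $k$-edge, at least two of the three edges of $T_j$ are non-parallel to it. Since $\Delta_k\theta_j$ does not depend on which $j$-edge is used, choosing such an edge forces $\Delta_k\theta_j=0$.
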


\begin{proof}
Write the realization as $P(a)=\sum_{i=1}^{d}T_i[a_i]$ with $a\in\{0,1,2\}^{d}$, and let
$v\colon\{0,1,2\}^{d}\to\R^{2}$ be an infinitesimal flex: $(v(a)-v(b))\cdot(P(a)-P(b))=0$ for
every edge. An edge changing coordinate $j$ from $u$ to $w$ has $P(a)-P(b)=T_j[w]-T_j[u]$,
a vector depending only on $(j,u,w)$.

\emph{Lower bound.} Rotating triangle $i$ infinitesimally, $v(a)=\theta_i\,J\,T_i[a_i]$
(with $J$ the $90^{\circ}$ rotation), preserves every edge vector's length; the $d$ such
motions, modulo the global rotation $\sum_i$, give $d-1$ independent internal flexes.

\emph{Upper bound.} Fix a coordinate $j$ and all other coordinates; the three vertices with
$j$-th coordinate $0,1,2$ carry velocities $w^{0},w^{1},w^{2}$ subject to
$(w^{s}-w^{r})\cdot(T_j[s]-T_j[r])=0$ for the three pairs. These say $w^{0},w^{1},w^{2}$ is an
infinitesimal isometry of the triangle $T_j$, hence a translation plus a rotation: along
every coordinate-$j$ line, $v(a)=\tau_j(a_{-j})+\theta_j(a_{-j})\,J\,T_j[a_j]$ for some
$\tau_j\in\R^{2}$, $\theta_j\in\R$ depending on the other coordinates $a_{-j}$. Comparing the
mixed difference $\Delta_j\Delta_k v$ computed from the $j$- and the $k$-forms gives
\[
  (\Delta_k\theta_j)\,J\bigl(T_j[w]-T_j[u]\bigr)=(\Delta_j\theta_k)\,J\bigl(T_k[w']-T_k[u']\bigr).
\]
For a generic twist the edge vectors of $T_j$ and $T_k$ are non-parallel, so both sides
vanish: $\theta_j$ is independent of $a_k$ for every $k\ne j$, hence constant. Then
$v(a)-\sum_j\theta_j J\,T_j[a_j]$ has zero difference in every direction, so it is a constant.
Thus $v$ is a constant (translation, $2$) plus $\sum_j\theta_j J T_j[a_j]$ (the $d$
triangle rotations), a space of dimension $d+2$; removing the $3$ trivial motions leaves
internal dimension $d-1$.
\end{proof}

We verified the rank numerically as a check: $\dim\mathrm{flex}=1,2,3,4$ for $d=2,3,4,5$,
matching $d-1$. So $H(3,3)$'s two-parameter flex of Section~4 is the case $d=3$, and each
$H(d,3)$ has the \emph{same} kind of flexibility one dimension richer: the graph is
generically rigid in the plane, yet its unit realizations sweep a $(d-1)$-parameter
continuum, exactly the Minkowski twist family. Thus the transcendence and tier phenomena of
Sections~3--4 recur verbatim up the family, on a $(d-1)$-dimensional stage.

\begin{remark}[General $q$]\label{rem:flexq}
The same argument applies to $H(d,q)$, whose generic Minkowski realization is a sum of $d$
unit $(q-1)$-simplices in $\R^{q-1}$ (Theorem~\ref{thm:edim}). Each simplex is rigid and may
rotate by $SO(q-1)$, giving at least $(d-1)\binom{q-1}{2}$ internal flexes; for $q=3$ this is
$d-1$ and is exact. For $q\ge4$ the rotation group is non-abelian and the flex is strictly
larger (numerically $4$ and $9$ for $H(2,4),H(3,4)$, versus the rotational count $3,6$): the
extra motions couple rotations across simplices, and pinning the exact dimension for $q\ge4$
is open.
\end{remark}

\begin{figure}[t]\centering
  \includegraphics[width=.58\linewidth]{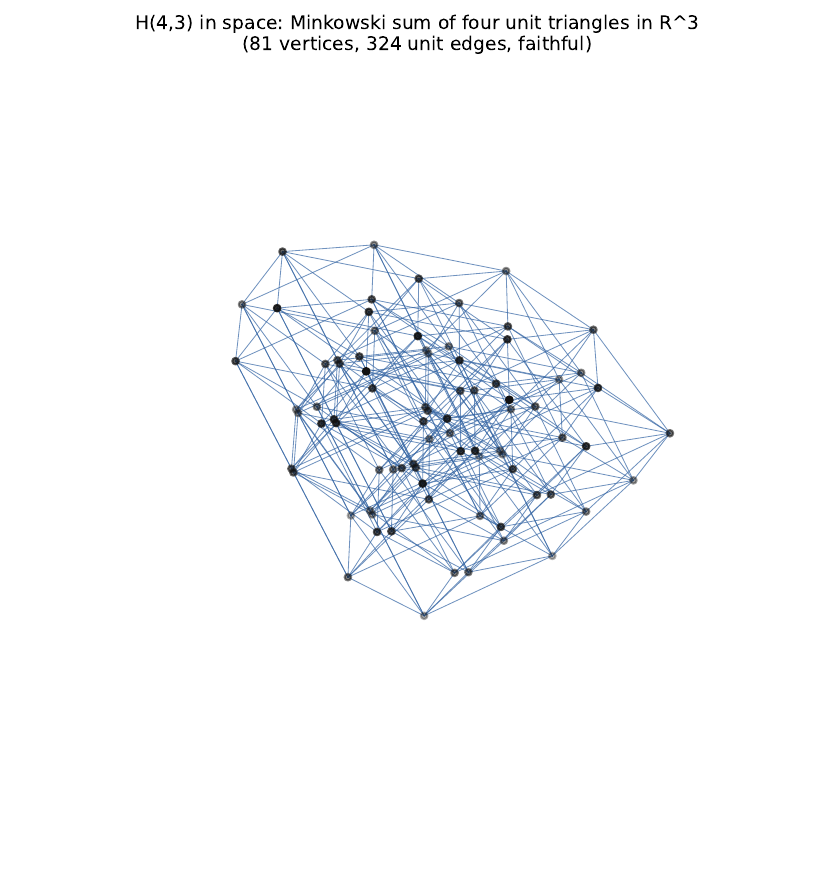}
  \caption{$H(4,3)$ lifted into space: the Minkowski sum of four unit triangles in
  four generic planes of $\R^3$ ($81$ vertices, $324$ unit edges, faithful). In the
  plane its unit realization is forced to cross; in $\R^3$ it is crossing-free.}
  \label{fig:h43space}
\end{figure}

\section{A gallery of the family $H(d,3)$}
The Minkowski construction draws the whole family at once. We collect large plots of the
faithful planar unit-distance realizations for $d=2,3,4,5$; the self-similar,
triangular structure --- each level three shrunken-and-translated copies of the previous
one --- is visible throughout, and it is exactly what the recursive method of the next
section exploits.

\begin{figure}[tp]\centering
  \includegraphics[width=.82\linewidth]{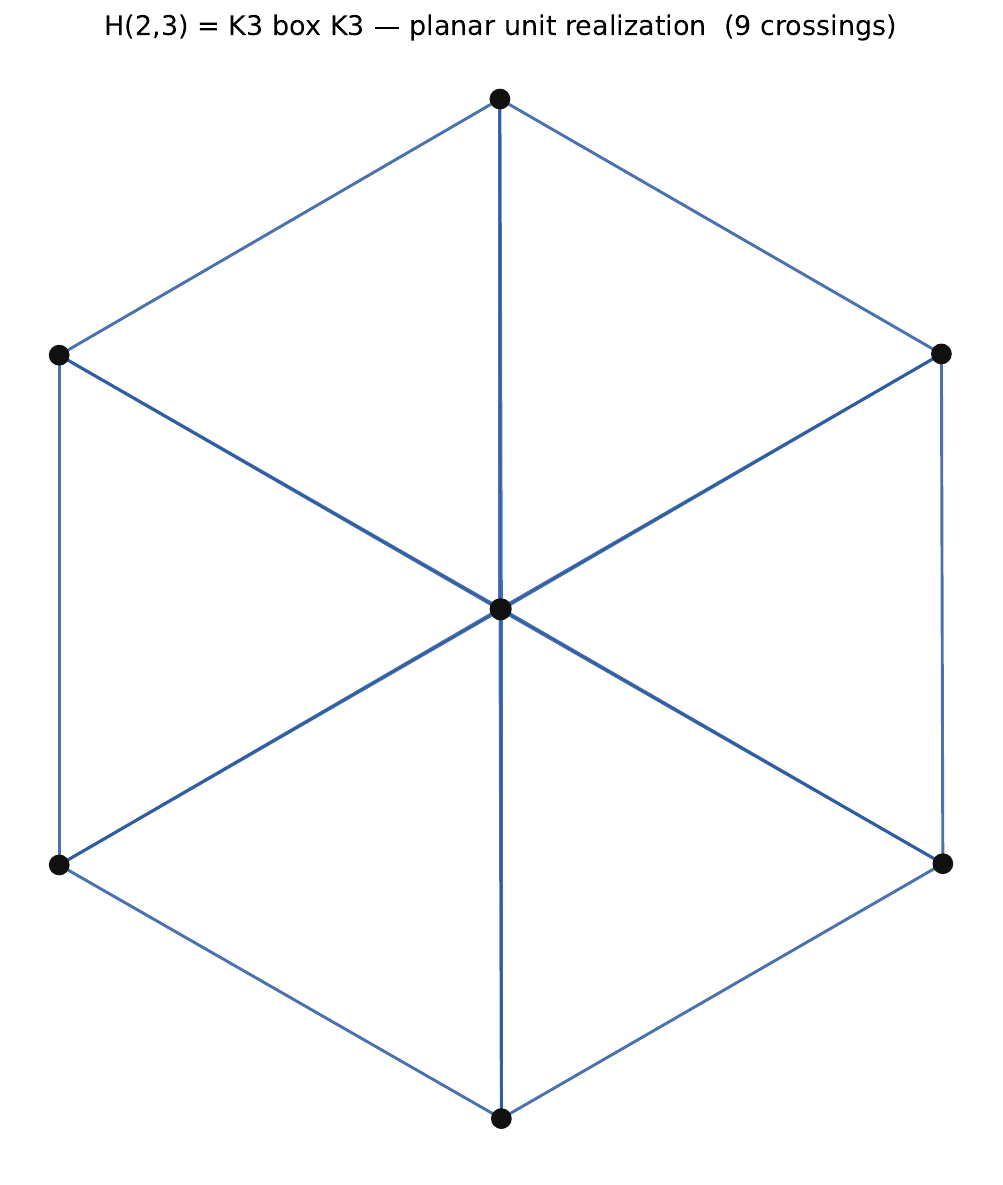}
  \caption{$H(2,3)=K_3\bx K_3$ (the $3\times3$ rook's / Paley$(9)$ / Latin-square graph),
  as a faithful planar unit-distance graph. Every one of the $18$ edges is a unit
  segment; the drawing has $9$ crossings and this is its unit-distance crossing number,
  $\udcr(K_3\bx K_3)=9$, against the ordinary $\cro=3$.}\label{fig:g23}
\end{figure}

\begin{figure}[tp]\centering
  \includegraphics[width=.82\linewidth]{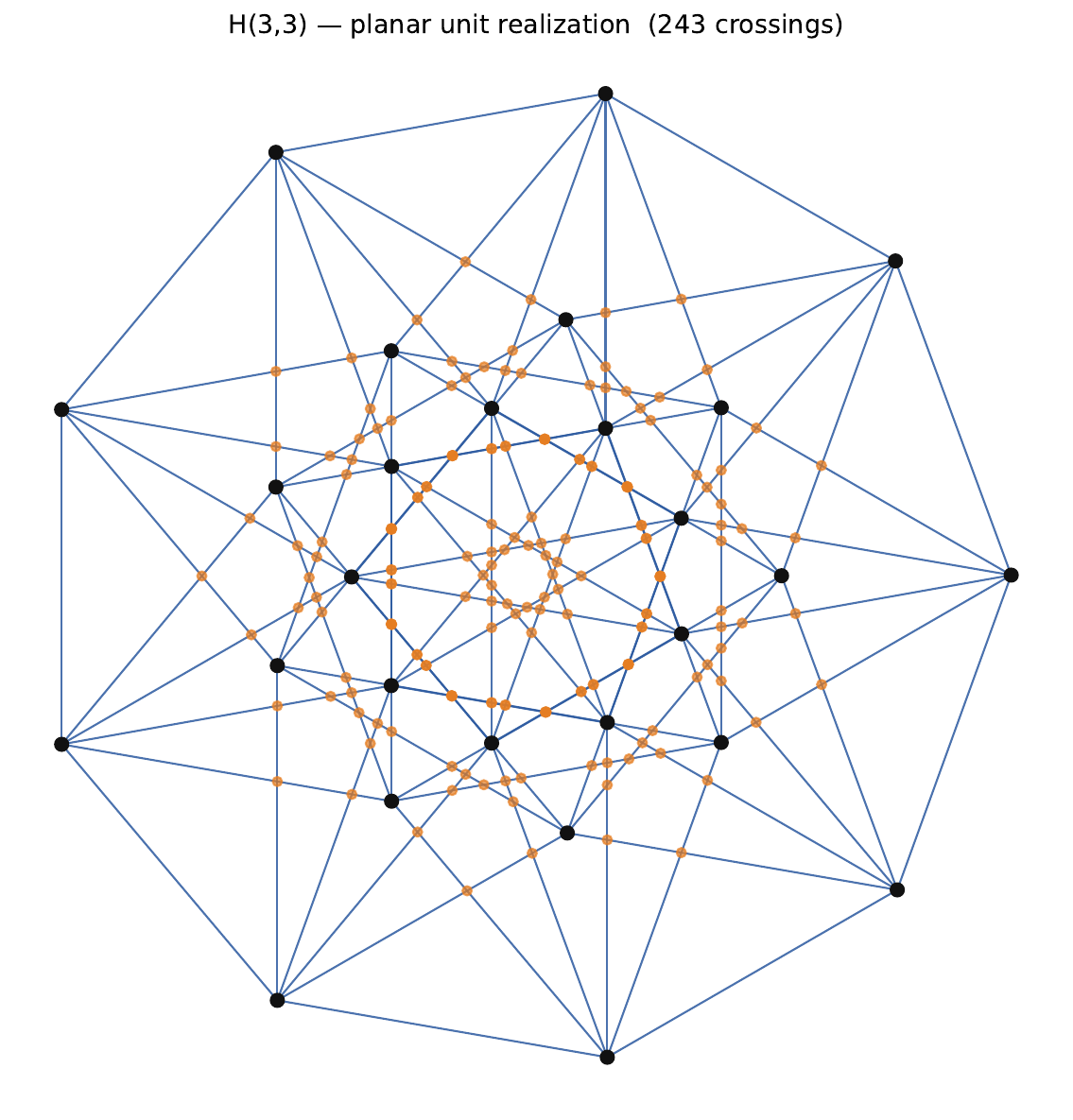}
  \caption{$H(3,3)=K_3^{\bx3}$ ($27$ vertices, $81$ unit edges): three translated copies
  of the $K_3\bx K_3$ drawing of Figure~\ref{fig:g23}, joined by unit-triangle bundles.}
  \label{fig:g33}
\end{figure}

\begin{figure}[tp]\centering
  \includegraphics[width=.82\linewidth]{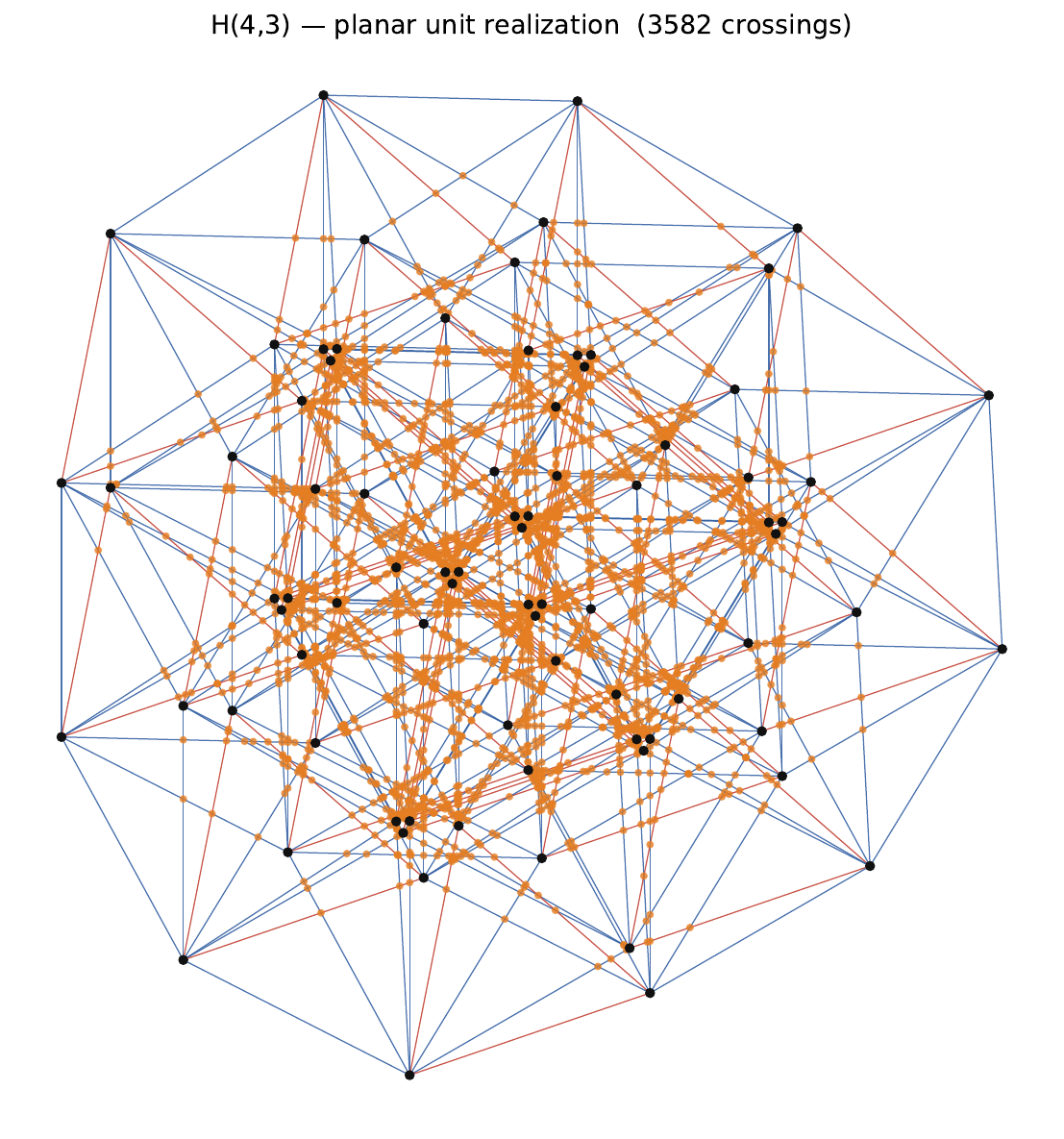}
  \caption{$H(4,3)=K_3^{\bx4}$ ($81$ vertices, $324$ unit edges). The bundle edges of the
  outermost triangle factor are drawn in red; removing them leaves three copies of
  Figure~\ref{fig:g33}.}\label{fig:g43}
\end{figure}

\begin{figure}[tp]\centering
  \includegraphics[width=.82\linewidth]{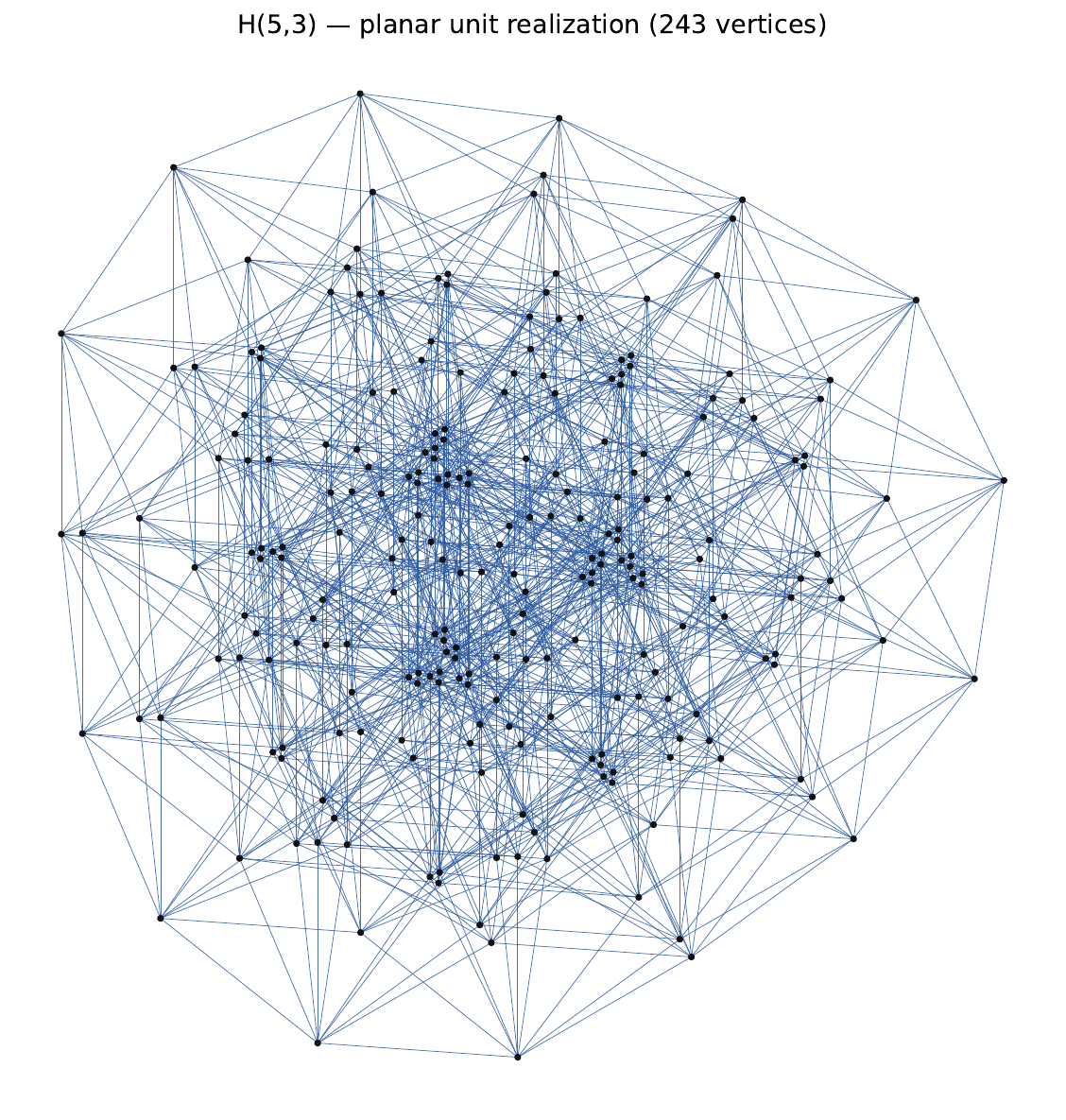}
  \caption{$H(5,3)=K_3^{\bx5}$ ($243$ vertices, $1215$ unit edges): the fifth member of
  the family, drawn as a faithful planar unit-distance graph by the Minkowski sum of five
  unit triangles.}\label{fig:g53}
\end{figure}

\begin{figure}[tp]\centering
  \includegraphics[width=.82\linewidth]{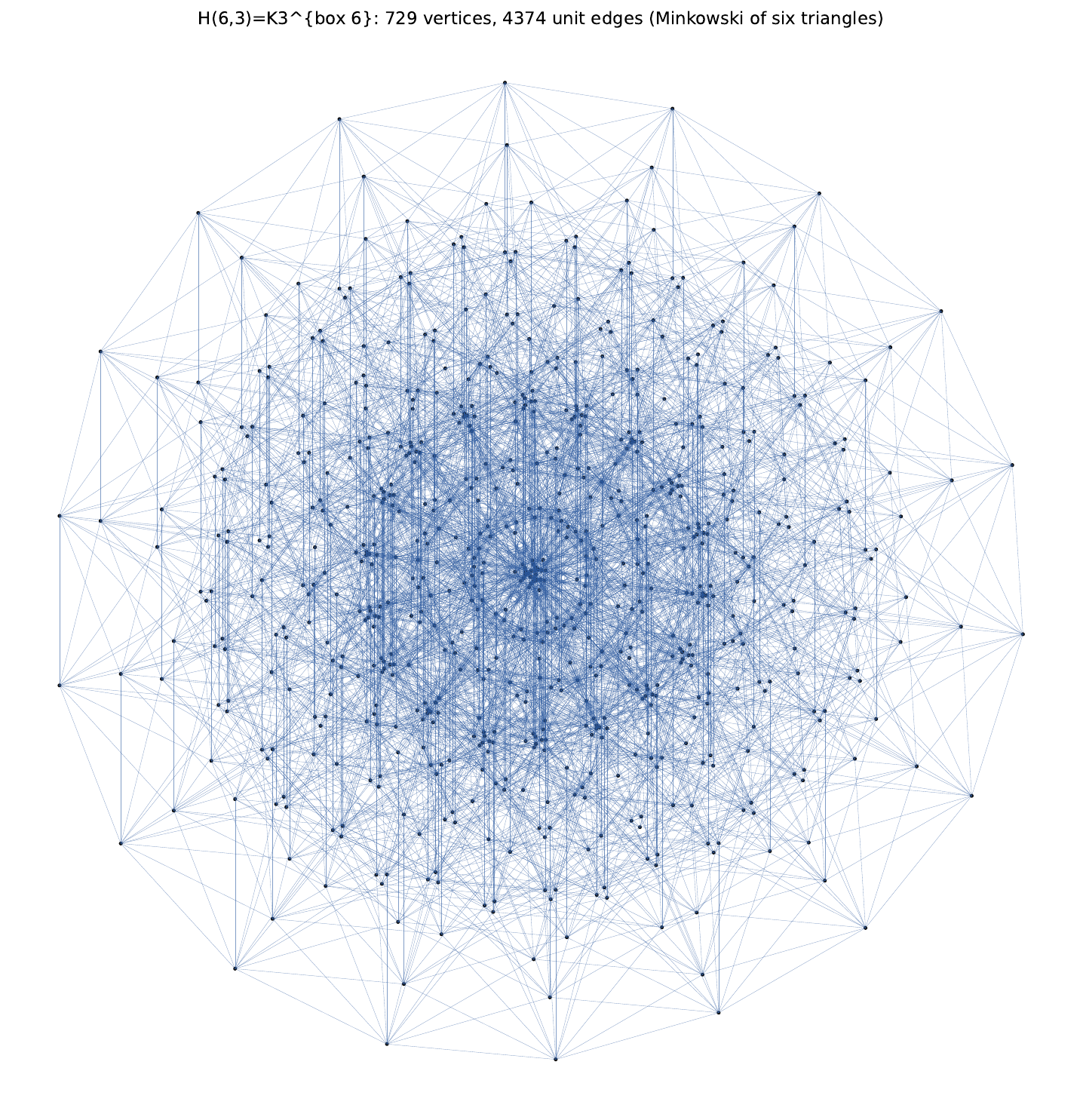}
  \caption{$H(6,3)=K_3^{\bx6}$: $729$ vertices and $4374$ unit edges. The Minkowski sum
  of six unit triangles fills the plane with the self-similar triangular pattern; even
  at this size every edge is exactly one unit long.}\label{fig:g63}
\end{figure}

\begin{figure}[tp]\centering
  \includegraphics[width=.48\linewidth]{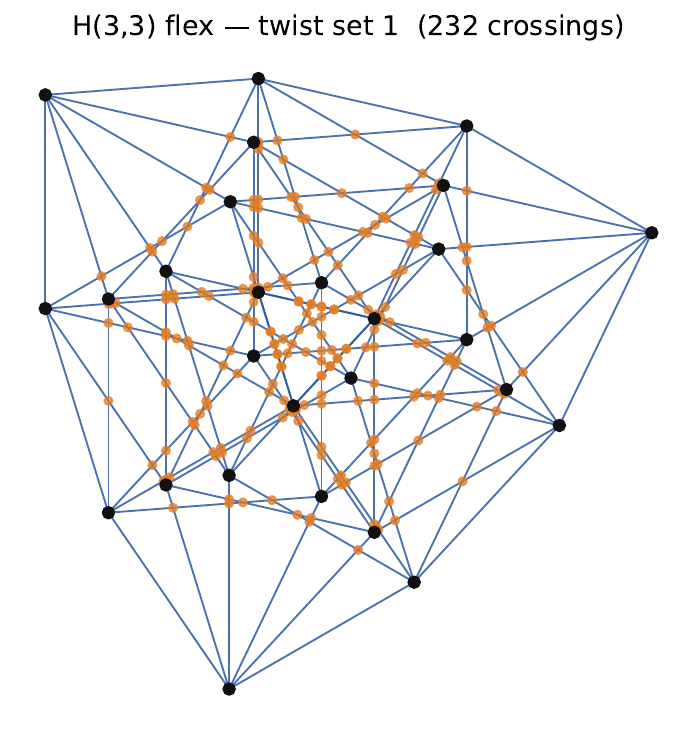}\hfill
  \includegraphics[width=.48\linewidth]{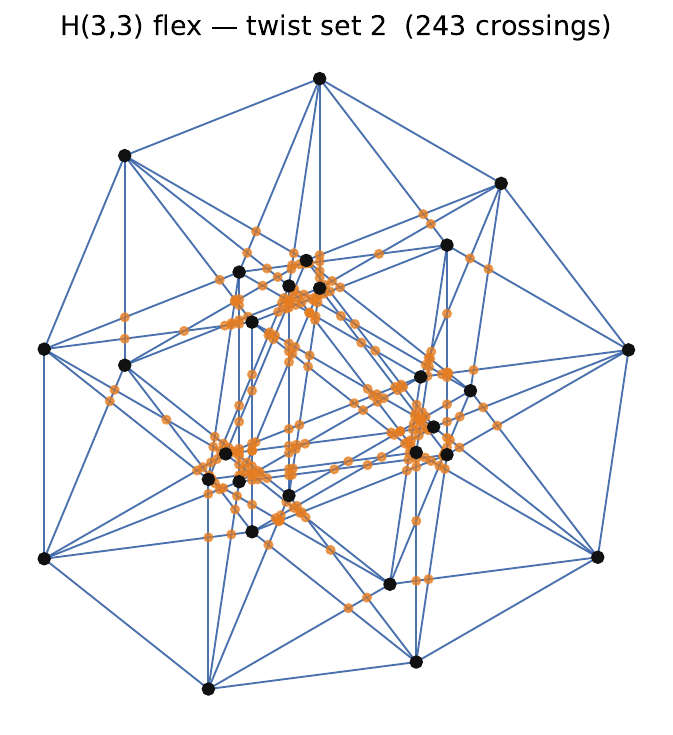}\\[6pt]
  \includegraphics[width=.48\linewidth]{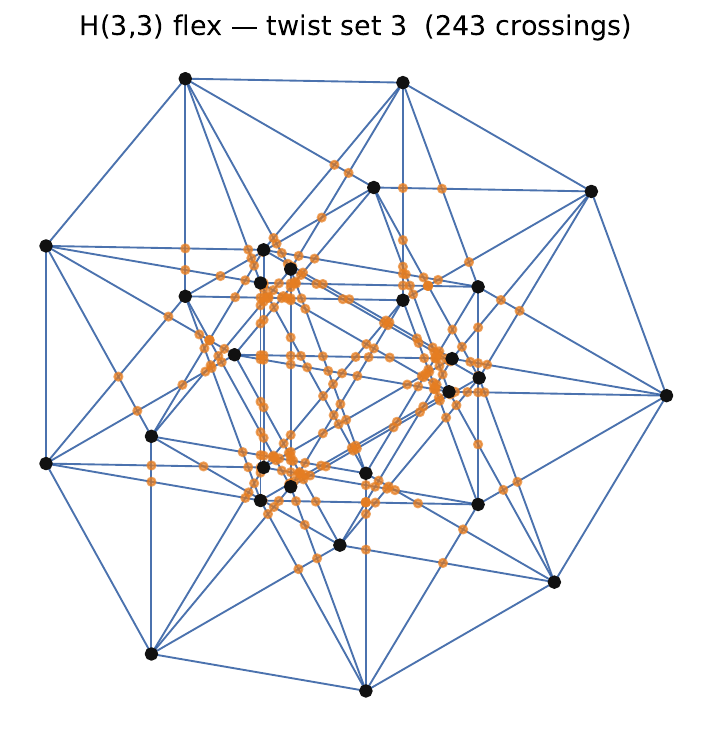}\hfill
  \includegraphics[width=.48\linewidth]{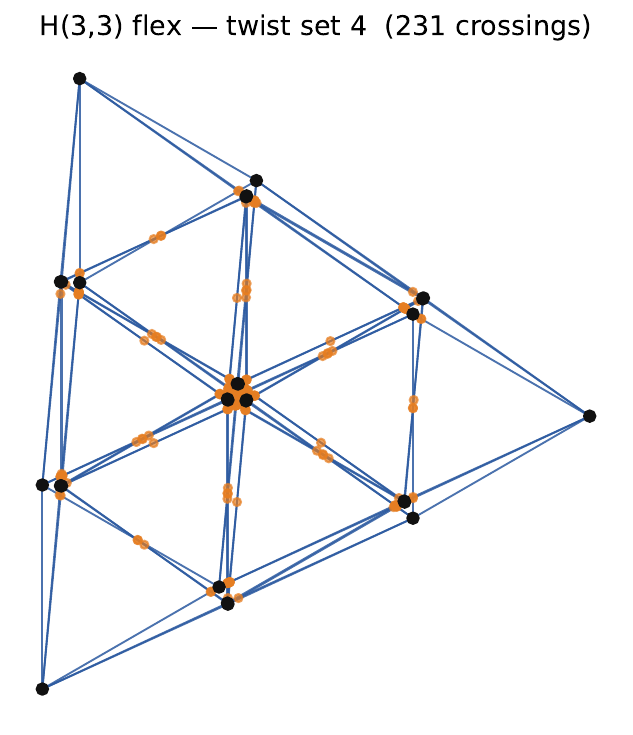}
  \caption{Four faithful unit-distance realizations of $H(3,3)$ along its two-parameter
  flex (the twist angles of two of the three triangles). All keep the $81$ edges unit;
  the coordinate field and the crossing pattern vary continuously along the flex.}
  \label{fig:flexgallery}
\end{figure}

\clearpage
\section{The recursive method for the crossing number}
Being a planar unit-distance graph, each $H(d,3)$ carries two crossing invariants:
the ordinary crossing number $\cro$~\cite{Guy}, and the \emph{unit-distance crossing number}
$\udcr$, the least number of crossings over faithful unit-distance realizations. We make the
latter precise. A \emph{strict faithful unit drawing} is an injective map $p:V\to\R^2$ with
$\|p(u)-p(v)\|=1$ iff $uv\in E$, edges drawn as the closed segments $p(u)p(v)$, in \emph{general
position}: no vertex lies in the interior of a non-incident edge, no two independent edges
overlap in positive length, and no point lies on three edges at once. A \emph{crossing} is a
transversal intersection of the interiors of two independent edges, counted once per pair; write
$\mathrm{cr}_u(D)$ for the number of such crossing pairs in a drawing $D$, and
$\udcr(G)=\min_D \mathrm{cr}_u(D)$ over all strict faithful unit drawings. A
unit-distance drawing is in particular a straight-line (rectilinear) drawing with all
edges of equal length, so it refines the \emph{rectilinear} crossing number $\mathrm{rcr}$:
\[
  \cro(G)\ \le\ \mathrm{rcr}(G)\ \le\ \udcr(G).
\]
In the taxonomy of crossing-number variants surveyed by Schaefer~\cite{SchaeferSurvey}
--- rectilinear, geometric, constrained, local, degenerate, monotone, convex, and many
more --- the unit-distance crossing number does not appear; it is a new
\emph{geometric-constraint} variant, namely the rectilinear crossing number under the
unit-length constraint. The rest of this section is the technical
core of the paper: a \emph{recursive, combinatorial method} that reduces the crossing count of an
instance on $n$ vertices to that of three instances on $n/3$ vertices plus three geometric
sums. The method is not ad hoc: it is the Cartesian-product decomposition
$H(d,3)=H(d-1,3)\bx K_3$, which splits the vertices into three copies of the smaller graph
(the factor $K_3$) and the edges into copy edges and bundle edges; counting crossings by these
classes yields a divide-and-conquer recurrence $c(n)=3\,c(n/3)+g(n)$, solved by the master
theorem. The ``three copies plus bundles'' step is exactly the recursive construction of the
clique-based cube $K_q^{d}=K_q^{d-1}\bx K_q$ (LaForge's $K$-cube~\cite{LaForge}: $q$ copies of
$K_q^{d-1}$ joined across identical labels), so our method is the crossing-number reading of
that structure; the linear (one-page) layout used below is a book drawing in the sense of
Bernhart and Kainen~\cite{BernhartKainen}, in the drawing tradition of Madej~\cite{Madej}.
Combined with a one-page layout and a bisection bound it pins the ordinary crossing number
exactly, $\cro(H)=\Theta(n^{2})$, and gives $\udcr(H)=O(n^{2}\log n)$ for the unit-distance one.
Throughout we state magnitudes in the number of vertices $n=3^{d}$; exact counts are not
needed.

\begin{figure}[tp]\centering
  \includegraphics[width=\linewidth]{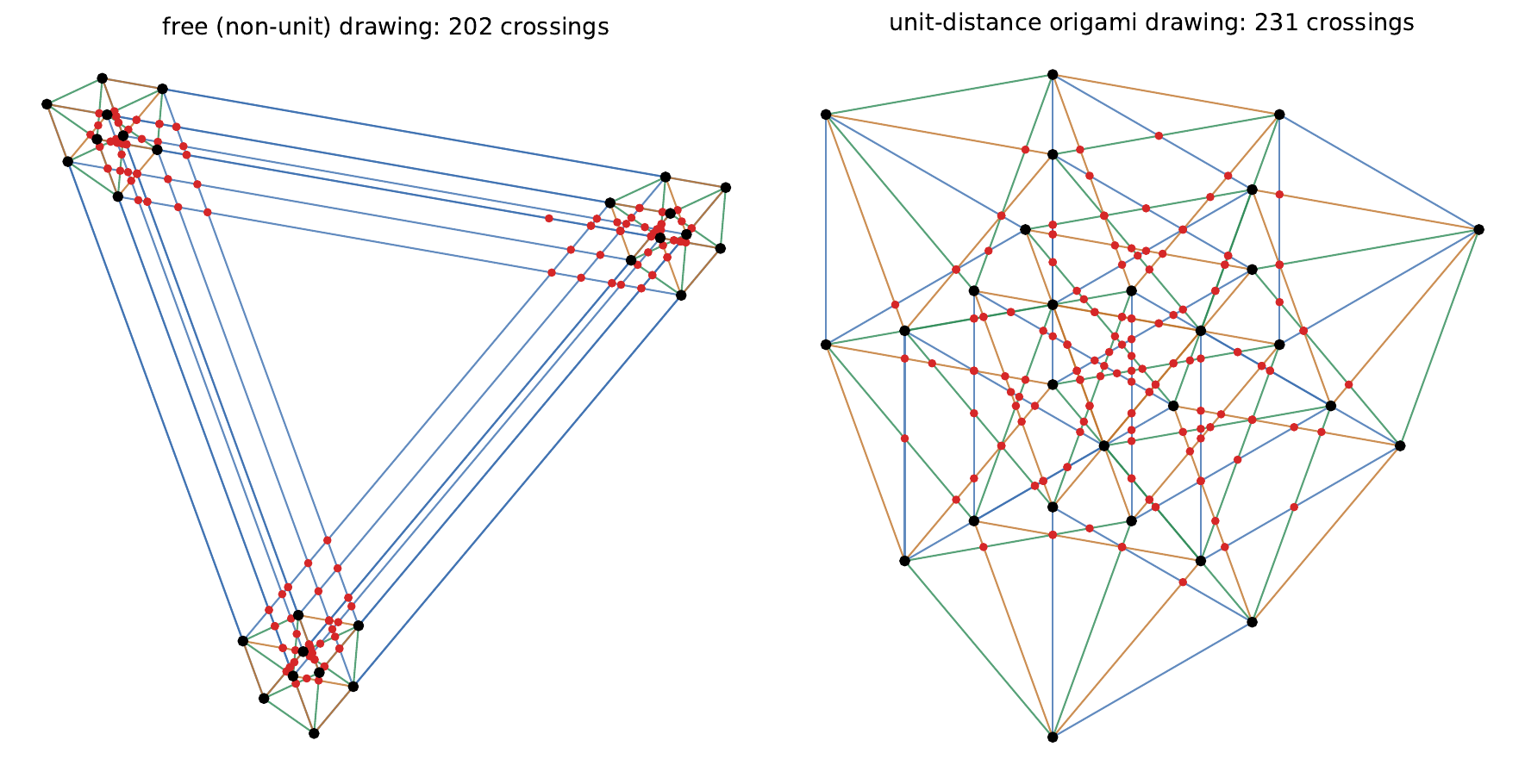}
  \caption{The two crossing numbers of $H(3,3)$ side by side. Left: a free (non-unit)
  separated drawing; right: the unit-distance drawing at the origami reference base
  (twists $0^\circ,20^\circ,40^\circ$, $\Q(\sin20^\circ)$; the same drawing as
  Figure~\ref{fig:h33origami}). Every edge on the right must be a unit segment, so it is more
  crowded: the unit drawing carries more crossings than the free one on the same vertices ---
  the gap that grows to a factor $\Theta(\log n)$.}
  \label{fig:crvsudcr}
\end{figure}

\subsection{The recursive drawing}\label{ss:recdraw}
Fix the drawing $D_{d-1}$ of $H(d-1,3)$ from \eqref{eq:immersion}, and add one more
unit triangle $T_d$ with vertices $t_0,t_1,t_2\in\R^2$. Define the drawing $D_d$ of
$H(d,3)$ by
\[
  D_d(a_1,\dots,a_{d-1},k)\ =\ D_{d-1}(a_1,\dots,a_{d-1})\ +\ t_k .
\]
Two structural facts follow at once. \emph{(i)} For each fixed $k$ the map
$x\mapsto x+t_k$ is a \emph{translation}, so $D_d$ contains three congruent copies
$D_{d-1}+t_0,\ D_{d-1}+t_1,\ D_{d-1}+t_2$ of the previous drawing. \emph{(ii)} The edges
of $H(d,3)$ split into \emph{copy edges} (one of the first $d-1$ coordinates changes),
each lying inside a single copy, and \emph{bundle edges} (the last coordinate changes),
which for each vertex $x$ of the smaller instance form a triangle $\beta_x$ on the three
points $D_{d-1}(x)+t_0,\,+t_1,\,+t_2$ --- a translate of $T_d$. There are $n/3$ bundle
triangles (Figure~\ref{fig:recstep}).

\begin{figure}[t]\centering
  \includegraphics[width=.58\linewidth]{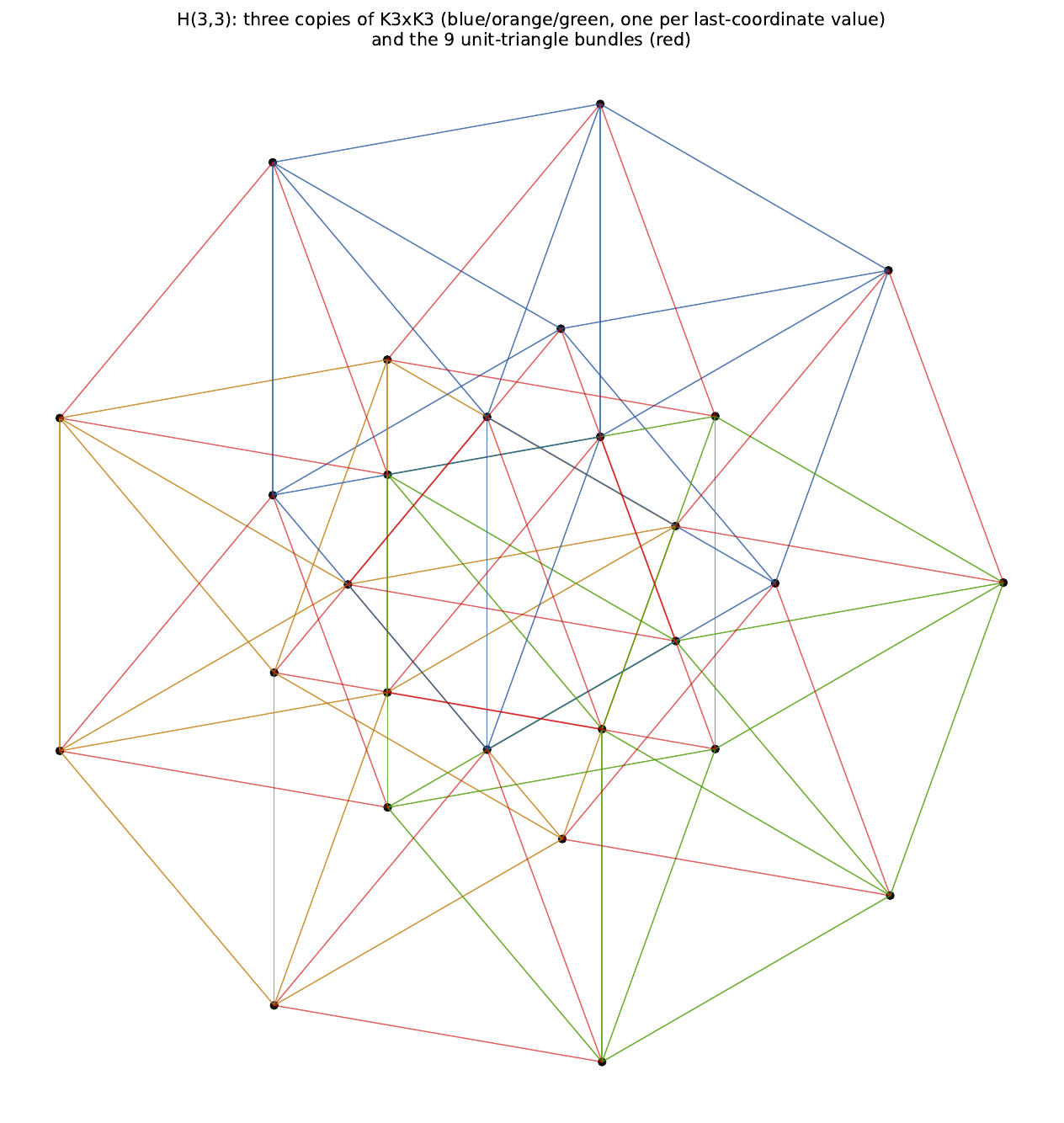}
  \caption{The recursive structure of $H(3,3)$ made explicit: the three copies of
  $K_3\bx K_3$ (blue, orange, green --- one per value of the last coordinate) and the
  $3^{2}=9$ unit-triangle bundles (red) joining corresponding vertices across the
  copies.}\label{fig:bundles}
\end{figure}

\subsection{The four combinatorial types of crossing}\label{ss:decomp}
Every crossing of $D_d$ has exactly one of four types, by the classes of the two edges
and whether they lie in the same copy:
\begin{equation}\label{eq:decomp}
  c(n)=\underbrace{3\,c(n/3)}_{\text{intra-copy}}
      +\underbrace{X}_{\text{copy--copy}}
      +\underbrace{Y}_{\text{bundle--bundle}}
      +\underbrace{Z}_{\text{copy--bundle}},
  \qquad g(n)=X+Y+Z.
\end{equation}
\emph{Intra-copy.} Two copy edges of one copy: as that copy is a translate of the smaller
drawing, these are precisely its crossings, i.e.\ $c(n/3)$ per copy and $3\,c(n/3)$ in all.
\emph{Copy--copy $X$.} Two copy edges in different copies: the copies are translates by a
unit vector $t_k-t_{k'}$, so this counts crossings between the smaller drawing and a
unit-shifted congruent copy of itself, over the three pairs. \emph{Bundle--bundle $Y$.} Two
bundle triangles $\beta_x,\beta_y$, each a translate of the fixed triangle $T_d$; a pair of
translates of a triangle cross in a number fixed by their offset, so $Y$ is a sum over the
$\binom{n/3}{2}$ base pairs. \emph{Copy--bundle $Z$.} A copy edge against a bundle edge.
The four-way split expresses the crossings of the $n$-vertex drawing
through those of the $n/3$-vertex one plus three geometric sums, which makes the recursion
computable level by level.

\begin{figure}[t]\centering
  \includegraphics[width=.59\linewidth]{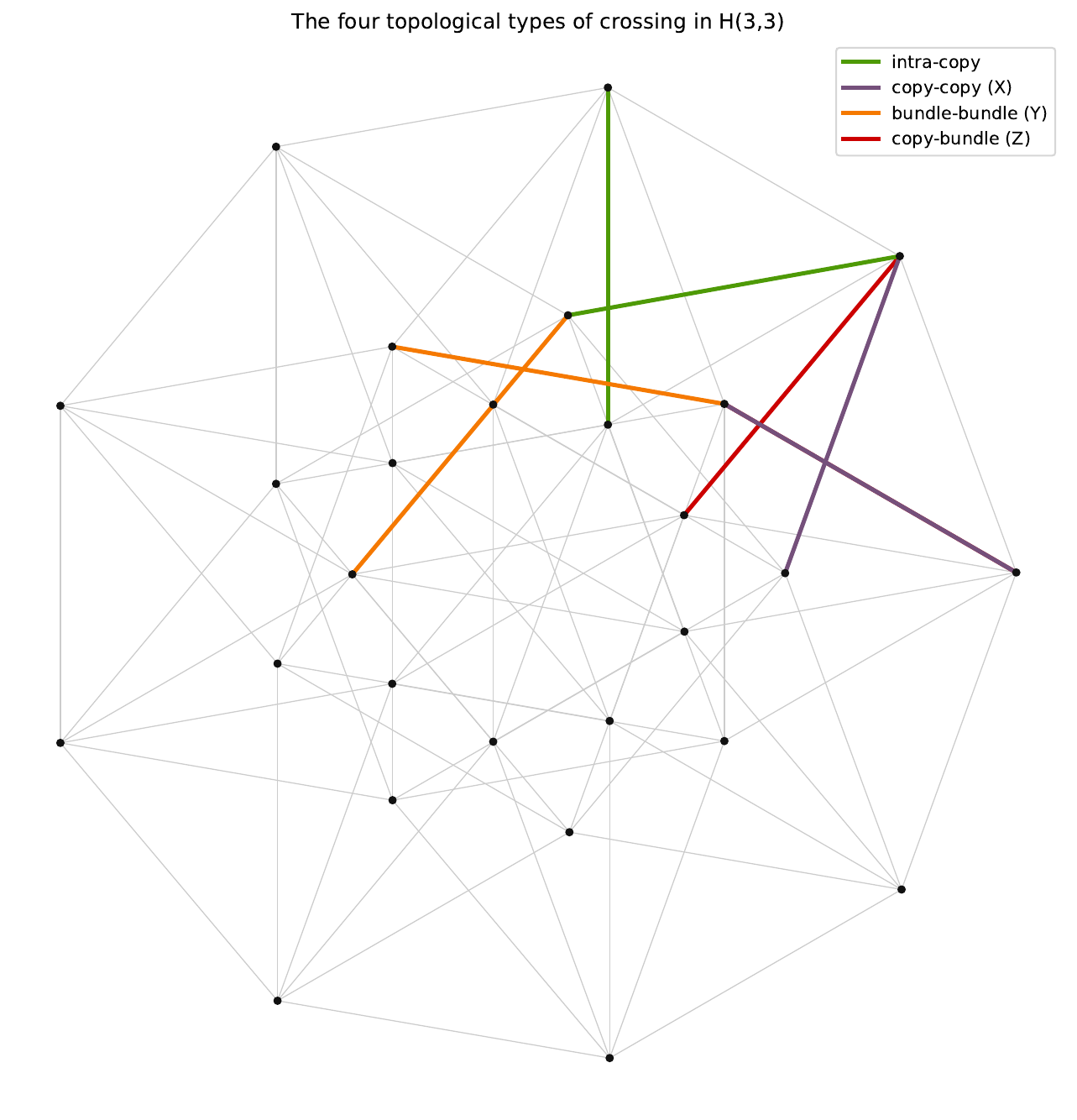}
  \caption{The four combinatorial types of crossing, one example of each highlighted on the
  $H(3,3)$ drawing (all other edges grey): intra-copy (green), copy--copy $X$ (purple),
  bundle--bundle $Y$ (orange), copy--bundle $Z$ (red). The recursion counts these four
  classes separately.}\label{fig:types}
\end{figure}

\begin{figure}[t]\centering
  \includegraphics[width=.57\linewidth]{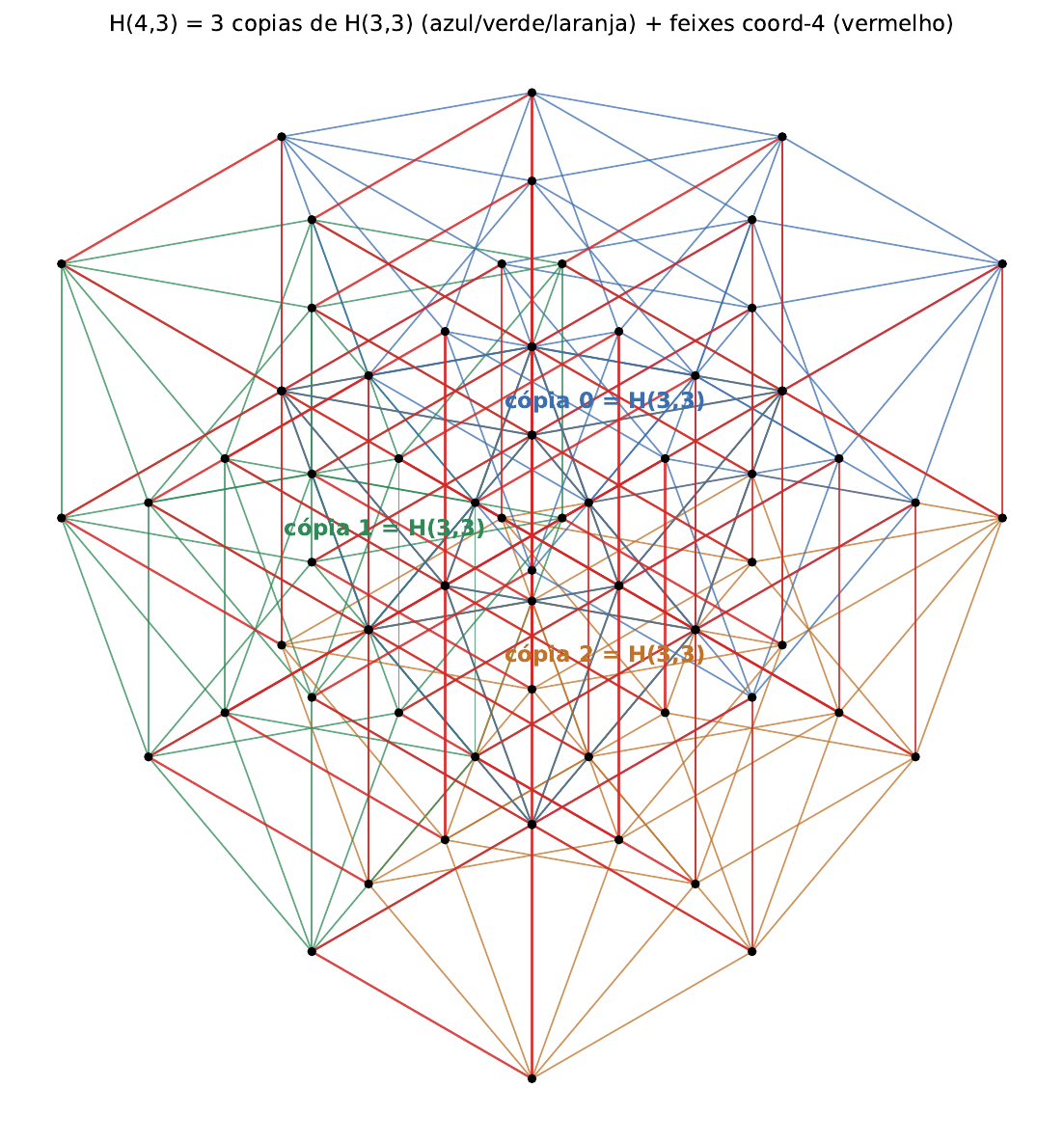}
  \caption{The unit recursion $H(4,3)=H(3,3)\bx K_3$: three congruent copies of the origami
  $H(3,3)$ drawing (blue, green, orange --- one per value of the last coordinate) joined by the
  coordinate-$4$ bundles (red). The intra-copy crossings are $3\,c(H(3,3))$ exactly; the bundles
  add $g=X+Y+Z$.}\label{fig:unitrec}
\end{figure}

\paragraph{Worked example: $d=3\to d=4$.} The step from $H(3,3)$ to
$H(4,3)=H(3,3)\bx K_3$ (Figure~\ref{fig:unitrec}) makes the recurrence concrete. Adding a fourth triangle splits
$H(4,3)$ into three \emph{congruent} copies of the $H(3,3)$ drawing (each a translate of the
same drawing) plus the coordinate-$4$ bundles, so the intra-copy term is structurally exact,
\[
  \text{intra}\ =\ 3\,c(H(3,3)),
\]
independent of any count. Writing $g=X+Y+Z$ for the bundle crossings (copy--copy, bundle--
bundle, copy--bundle), a good drawing at the origami base gives
$c(H(4,3))=3\,c(H(3,3))+g\le3546$, with the bundle--bundle term $Y$ of lowest order and $X,Z$
dominant. The same peeling takes $H(4,3)$ to three copies inside $H(5,3)$, and so on down the
tower, giving the upper bounds
\[
  \udcr(H(d,3))\ \le\ 9,\ 231,\ 3546,\ 44028,\ 490935\qquad(d=2,\dots,6).
\]

One caveat about the exact symmetric point. The \emph{exact} symmetric origami
point is degenerate --- several edges pass through a common point --- as constructible points
tend to be, sitting on the concurrence loci of the algebraic web; the individual counts are
therefore taken, as is standard for the crossing number, in a \emph{good drawing}
infinitesimally close to it (no three edges concurrent), which is why we state them as upper
bounds. That the crossing-optimal realization is exactly where edges concur is the same
phenomenon from the other side: constructibility and crossing-optimality meet on the
degeneracy locus.

\paragraph{Anatomy of the crossings.} The four-type partition \eqref{eq:decomp} controls
the growth. The bundle--bundle term $Y$ is of lowest order --- two translated triangles meet
in $O(1)$ points, so $Y=O(n^{2})$ --- while the copy--copy
term $X$ and the copy--bundle term $Z$ dominate. Bounding these two crudely by the number of
edge pairs gives $g=X+Y+Z=O(n^{2}\log^{2}n)$; the observed order is one logarithm smaller,
$g(n)=O(n^{2}\log n)$, the single $\log$ coming from the depth of the recursion, and it is
this factor that separates the unit count from the ordinary one.

\subsection{The recursion and the unit upper bound}
Let $c(n)$ denote the crossing count of the Minkowski drawing $D$ of the instance on $n$
vertices (built from a twist vector; the count depends on it, but the bounds below do not).
Collecting \eqref{eq:decomp}, the drawing on $n$ vertices is three translated copies of the
drawing on $n/3$ vertices plus the bundle sums:
\begin{equation}\label{eq:rec}
  c(n)=3\,c(n/3)+g(n),\qquad \udcr(H)\le c(n).
\end{equation}

\begin{proposition}\label{prop:rec}
The drawing $D$ is a faithful unit-distance drawing whose crossing number satisfies
\eqref{eq:rec}, with $g(n)=X+Y+Z$ the geometric bundle sums. Hence $\udcr(H)\le c(n)$, and
solving the recursion gives $\udcr(H)=O(n^{2}\log^{2}n)$ from the crude bound on $g$; this is
sharpened to $O(n^{2}\log n)$ in Theorem~\ref{thm:udcr}.
\end{proposition}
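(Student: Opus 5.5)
The plan has three parts: faithfulness and unit length of $D$, the exact recurrence, and the crude solution of that recurrence.

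\emph{Faithfulness and unit length.} Unit length is Proposition~\ref{prop:mink} extended to $d$ summands: every edge of $D$ is a translate of a side of some $T_i$, so $D$ is a unit drawing. For faithfulness and general position, I would argue as in Proposition~\ref{prop:faithful-transc}(i), now over the $(d-1)$-parameter twist space of Theorem~\ref{prop:flex}. Each of the following is a real-analytic equation in the twist angles: a collision, a ghost unit distance between non-adjacent vertices, a vertex lying on a non-incident edge, an overlap of independent edges, or a triple concurrence. Once each equation is shown to fail at a single twist vector, its zero set has measure zero, and we can choose the twist vector outside all of them. The degenerate symmetric points discussed above are avoided precisely by this choice. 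Hence $D$ is a strict faithful unit drawing, and $\udcr(H)\le c(n)$ by definition.

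\emph{The recurrence.} I would use the recursive description of \S\ref{ss:recdraw}: $D_d$ consists of three translates $D_{d-1}+t_k$ together with the $n/3$ bundle triangles $\beta_x$. Each crossing is a pair of independent edges, and every edge is either a copy edge (belonging to exactly one copy) or a bundle edge. So each crossing falls into exactly one class: both edges copy edges in the same copy, copy edges in different copies, both bundle edges, or mixed. Since translation preserves crossings, the first class contributes exactly $3\,c(n/3)$. Defining $X,Y,Z$ as the sizes of the other three classes gives \eqref{eq:rec} with $g=X+Y+Z$.

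\emph{The crude bound.} We have $|E|=dn=\Theta(n\log n)$. The bundle edges number $n$, and the copy edges number $(d-1)n=O(n\log n)$. This gives the following bounds:
\[
X\le\binom{(d-1)n}{2}=O(n^2\log^2 n),\qquad Z\le n\cdot(d-1)n=O(n^2\log n),\qquad Y=O(n^2).
\]
The bound on $Y$ holds because two translates of one triangle cross in $O(1)$ points, summed over $\binom{n/3}{2}$ pairs.

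Hence $g(n)=O(n^2\log^2 n)$. Unrolling the recurrence gives $c(n)=\sum_{j} 3^j g(n/3^j)$. Because $g(m)\le Cm^2\log^2 m$, the $j$-th term is at most $3^{-j}Cn^2\log^2 n$. The sum is therefore geometric, which is case 3 of the master theorem, and $c(n)=O(n^2\log^2 n)$.

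\emph{Main obstacle.} The main obstacle is the genericity step. One must exhibit, for each bad analytic condition, a single twist vector where it fails, uniformly in $d$. I would first try induction on $d$. Given a good $D_{d-1}$, I would take $T_d$ very small in effect, that is, nearly aligned so that the copies are almost separated: perturbing the angle of $T_d$ then only moves the three copies rigidly. A coincidence that fails for one angle fails generically, and this allows the conditions to be checked one at a time. The sharpening to $O(n^2\log n)$ is deferred to Theorem~\ref{thm:udcr}, as the statement indicates.
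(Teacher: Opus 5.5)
Your proof of the recurrence and the crude bound is the paper's proof. You use the four-type partition, get the intra-copy class equal to $3\,c(n/3)$ because each copy is a translate of $D_{d-1}$, bound $X$, $Y$, $Z$ by edge pairs, and unroll to $\sum_j 3^j g(n/3^j)\le Cn^2\log^2 n\sum_j 3^{-j}$, which is the precise form of the paper's remark that the top level dominates.

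Where you differ is faithfulness and general position. The paper's proof goes from the unit edge lengths of Proposition~\ref{prop:mink} straight to faithfulness. That does not follow: equal twists $\theta_i=\theta_j$ already produce collisions. The paper also never addresses general position. Your choice of a twist vector off a finite union of proper analytic degeneracy loci is what actually makes $D$ admissible in the definition of $\udcr$. It also avoids the degenerate origami point that the paper has to perturb.

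The device you suggest for proving that each locus is proper does not work as stated. The copies $D_{d-1}+t_k$ are unit translates of a drawing whose width in every direction is at least $(d-1)\sqrt3/2$, so they can never be nearly separated.

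Your other observation, that changing $\theta_d$ only translates the copies rigidly, does handle the pairwise conditions. Varying $\theta_d$ alone moves copy $k$ by $\tfrac1{\sqrt3}\,\omega^{k}e^{i\theta_d}$, with $\omega=e^{2\pi i/3}$. So every collision, ghost, vertex-on-edge or overlap condition that involves two copies or a bundle edge compares a fixed configuration with a rotating unit vector or unit segment. Each such condition holds for at most finitely many $\theta_d$, and conditions inside a single copy are inherited from $D_{d-1}$ by induction.

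Triple concurrences need more care. Three edge-lines lying in the three different copies stay concurrent for every $\theta_d$ exactly when two things hold: their directions are the three side directions of one $T_i$, in the reflected assignment to the copy labels, and the untranslated lines are already concurrent in $D_{d-1}$. To close this case, add non-concurrency of such \emph{lines}, not just segments, to your induction hypothesis, or vary the other twists as well.
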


\begin{proof}
By construction $D$ places the vertices at Minkowski sums of unit triangles and draws every
edge as a unit segment (Proposition~\ref{prop:mink}), so it is a faithful unit drawing and
$\udcr(H)\le c(n)$. The four-type partition \eqref{eq:decomp} classifies each crossing once;
the intra-copy class is $3\,c(n/3)$ because each copy is a translate of the smaller drawing,
which gives \eqref{eq:rec}. Bounding $g(n)=O(n^{2}\log^{2}n)$ by edge pairs, the recursion
$c(n)=3\,c(n/3)+O(n^{2}\log^{2}n)$ is dominated by its top level (the branching factor $3$
against the size drop to $n/3$ costs only $O(n)$), so $c(n)=O(n^{2}\log^{2}n)$; the sharper
$O(n^{2}\log n)$ is Theorem~\ref{thm:udcr}.
\end{proof}

Only the base value $c=9$ at $n=9$ is constant across the Minkowski family; it is our best (and
conjectured exact) value for $\udcr(K_3\bx K_3)$ (below). Because the count is otherwise drawing-dependent, we fix an \emph{exact} reference: the
\emph{origami} realization, with all $d$ triangles twisted by multiples of $20^\circ=60^\circ/3$
(coordinates in the trisection field $\Q(\sin20^\circ)$). This is not an arbitrary choice --- it
is the crossing-minimising constructible configuration (\S\ref{ss:mintier}), so it gives both a
canonical algebraic reference and the tightest simple upper bound,
\[
  \udcr(H(d,3))\ \le\ 9,\ 231,\ 3546,\ 44028,\ 490935\qquad(d=2,\dots,6),
\]
with exact coordinates rather than a decimal twist. The order $\udcr(H)=O(n^{2}\log n)$
(Theorem~\ref{thm:udcr}) is independent of this base, holding for every generic realization;
the origami base fixes only the exact small-$n$ values.

\begin{figure}[t]\centering
  \includegraphics[width=.6\linewidth]{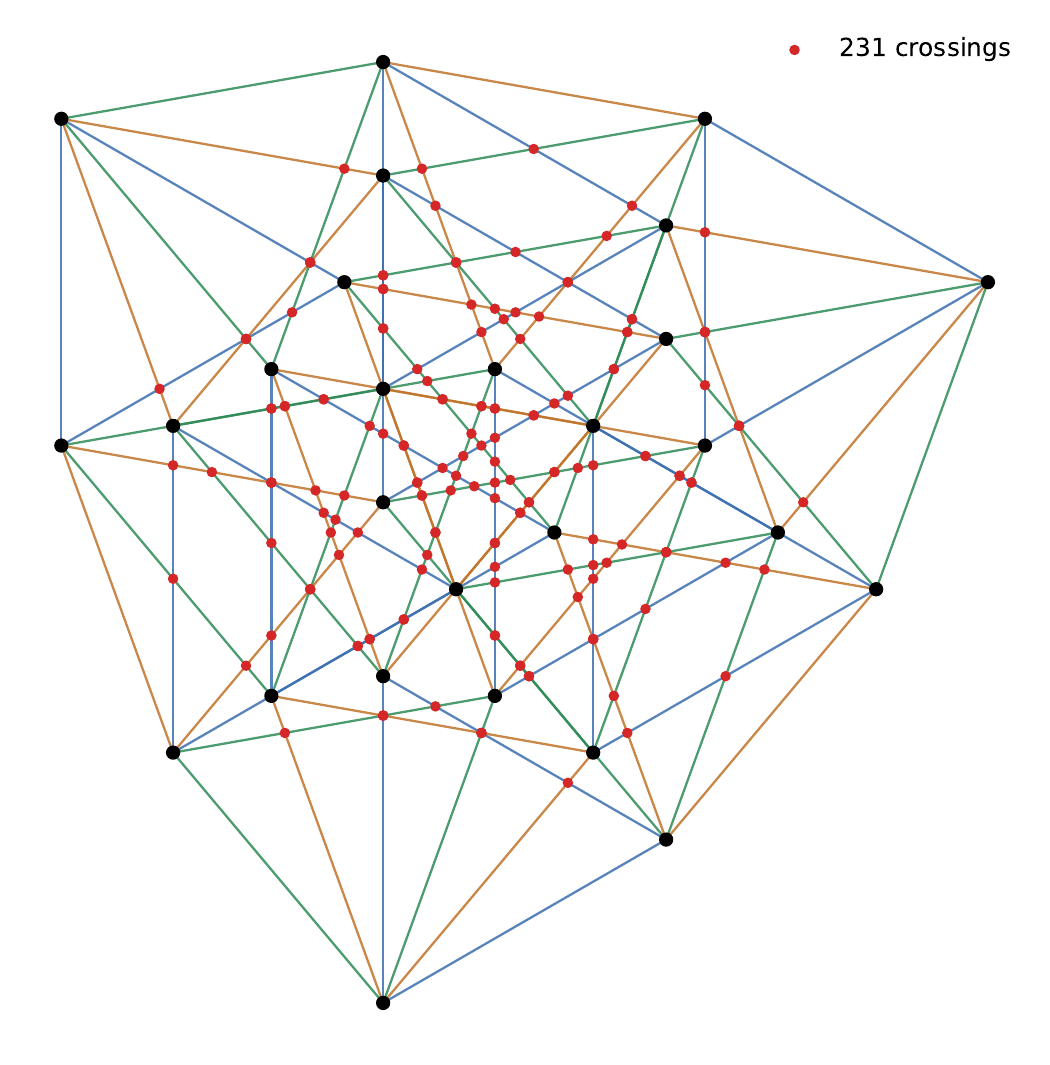}
  \caption{The canonical reference drawing of $H(3,3)=K_3^{\bx3}$: the origami realization, the
  Minkowski sum of three unit triangles (circumradius $1/\sqrt3$) twisted by $0^\circ,20^\circ,
  40^\circ$, with vertices
  $T_j=\tfrac1{\sqrt3}\{(\cos(\varphi_j{+}120^\circ k),\sin(\varphi_j{+}120^\circ k))\}_{k=0,1,2}$,
  $\varphi_j\in\{0^\circ,20^\circ,40^\circ\}$, coordinates in $\Q(\sin20^\circ)$ (degree $3$).
  Edges are coloured by the coordinate they change; the $231$ red dots are the crossings ---
  the exact minimum $\udcr(H(3,3))$ over the flex (\S\ref{ss:mintier}), against
  $\cro(H(3,3))\le189$ for a free drawing.}\label{fig:h33origami}
\end{figure}

\begin{figure}[t]\centering
  \includegraphics[width=.55\linewidth]{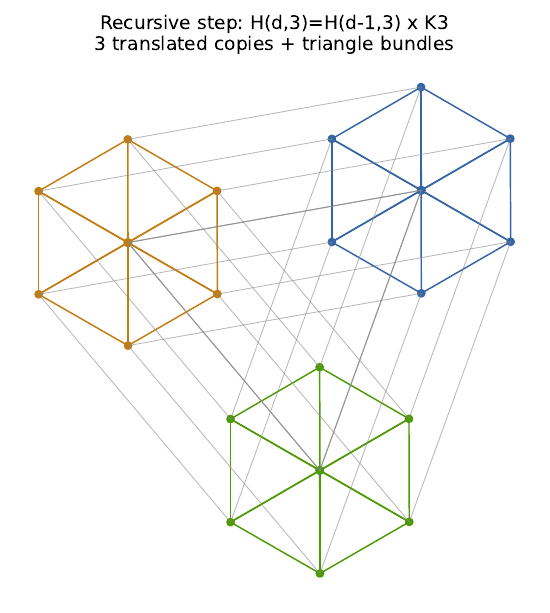}\\[8pt]
  \includegraphics[width=.72\linewidth]{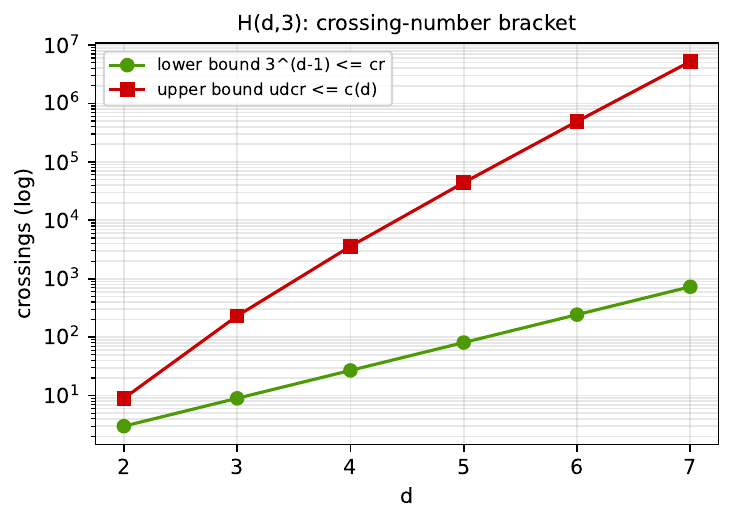}
  \caption{Left: the recursive step $H(d,3)=H(d-1,3)\bx K_3$, three translated copies
  plus triangle bundles. Right: the crossing bracket
  the band $\Omega(n)\le\cro(H)\le\udcr(H)\le c(n)=O(n^2\log n)$ (log scale).}
  \label{fig:recstep}
\end{figure}

\subsection{The lower bound: vertex-disjoint copies}
The same recursive structure yields a lower bound by the same three-copy split.

\begin{theorem}\label{thm:lower}
$\cro(H)=\Omega(n)$: the ordinary crossing number of the instance on $n$ vertices is at
least $n/3$.
\end{theorem}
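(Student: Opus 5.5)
We prove $\cro(H(d,3))\ge 3^{d-1}=n/3$ by induction on $d$, using the same three-copy split $H(d,3)=H(d-1,3)\bx K_3$ as in \S\ref{ss:recdraw}, but inside an \emph{arbitrary} drawing rather than the Minkowski one. The idea is that $H(d,3)$ contains $n/3$ vertex-disjoint subgraphs, each forced to carry at least one crossing, and that crossings inside vertex-disjoint subgraphs are distinct crossings of the whole drawing.

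\emph{Base case.} For $d=2$ we need $\cro(K_3\bx K_3)\ge 3$, which is the classical value $\cro(K_3\bx K_3)=3$ recorded in Figure~\ref{fig:g23}. For the induction it suffices to know that $K_3\bx K_3$ is nonplanar. Its Euler count gives $9$ vertices and $18$ edges, but it contains triangles, so $3n-6=21$ does not suffice. Instead, the $3\times3$ rook's graph is the line graph of $K_{3,3}$ and contains a $K_{3,3}$ subdivision, hence is nonplanar, so $\cro\ge1$. We will want the sharper base $\cro(H(2,3))\ge 3=9/3$.

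\emph{Partition step.} Fix any good drawing $D$ of $H(d,3)$ with $d\ge3$. Consider the $3^{d-2}$ subgraphs obtained by fixing coordinates $3,\dots,d$ and letting coordinates $1,2$ vary. Each is a copy of $K_3\bx K_3$, and these copies are pairwise vertex-disjoint. In a good drawing, each crossing involves exactly two edges. A crossing counted inside one copy involves two edges of that copy, so no crossing is counted for two different copies. Hence
\[
\cro(D)\ \ge\ \sum_{\text{copies}}\cro\bigl(D|_{\text{copy}}\bigr)\ \ge\ 3^{d-2}\cdot\cro(K_3\bx K_3)\ \ge\ 3^{d-2}\cdot 3 \;=\; 3^{d-1}=n/3 .
\]
Equivalently, in recursive form: the three copies of $H(d-1,3)$ (fix the last coordinate) are vertex-disjoint, so $\cro(H(d,3))\ge 3\,\cro(H(d-1,3))$. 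With base $\cro(H(2,3))\ge3$ this gives $3^{d-1}$. This mirrors the intra-copy term $3c(n/3)$ of \eqref{eq:decomp}, now as an inequality valid for every drawing since the other terms are nonnegative.

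\emph{Main obstacle.} The only nontrivial input is the base value $\cro(K_3\bx K_3)=3$. This is a known result for $C_3\bx C_3$ (Ringeisen--Beineke), and we would cite it. If it cannot be cited, the fallback is nonplanarity alone, which gives $\cro\ge1$ per copy and hence $\cro(H)\ge n/9$. That is still $\Omega(n)$ but short of the stated constant $n/3$. To recover $n/3$ directly, one would try a disjoint-$K_{3,3}$-subdivision argument: find three edge-disjoint Kuratowski subgraphs in $K_3\bx K_3$ such that each crossing destroys at most one of them. This is the step I expect to need the most care.
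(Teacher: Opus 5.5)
Your proof is correct and is essentially the paper's: restricting an arbitrary drawing to vertex-disjoint copies gives $\cro(H(d,3))\ge 3\,\cro(H(d-1,3))$. Your $3^{d-2}$ copies of $K_3\bx K_3$ are just this recursion unrolled, and the base value $\cro(K_3\bx K_3)=3$, which the paper asserts without proof and you rightly attribute to Ringeisen--Beineke, then yields $n/3$ for $d\ge2$ (like the paper, you tacitly need $d\ge2$, since $\cro(K_3)=0<1$). Your contemplated fallback via three edge-disjoint Kuratowski subgraphs cannot work, because each nonplanar subgraph needs at least $9$ of the $18$ edges of $K_3\bx K_3$, so the citation is genuinely needed for the constant $1/3$.
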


\begin{proof}
Write $T(n)$ for the crossing number. Fixing one coordinate to each of its three values
exhibits three vertex-disjoint copies of the instance on $n/3$ vertices. In \emph{any}
drawing, the restriction to each copy is a drawing of that instance, so contributes at
least $T(n/3)$ crossings; and since the copies are vertex-disjoint, a crossing internal to
one copy involves no edge of another, so the three tallies are disjoint and add. Hence
$T(n)\ge 3\,T(n/3)$, with $T(9)=\cro(K_3\bx K_3)=3$. Unrolling gives $T(n)\ge n/3$, i.e.\
$\cro(H)=\Omega(n)$.
\end{proof}

At the base the two invariants already separate: $\cro(K_3\bx K_3)=3$, whereas every faithful
unit realization \emph{in the Minkowski family} has exactly nine crossings, so
$\udcr(K_3\bx K_3)\le9$ (Figure~\ref{fig:k3k3udcr}); we have found no faithful unit realization
with fewer, and conjecture equality, but a lower bound over \emph{all} faithful realizations is
not established.

\begin{figure}[htbp]\centering
  \includegraphics[width=.51\linewidth]{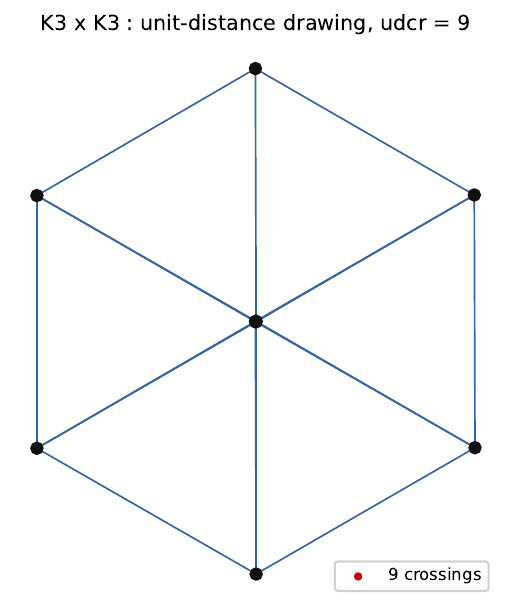}
  \caption{The unit-distance drawing of $K_3\bx K_3$ with its nine forced crossings:
  $\udcr(K_3\bx K_3)=9$ against $\cro(K_3\bx K_3)=3$.}\label{fig:k3k3udcr}
\end{figure}

\subsection{An unconditional $O(n^{2})$ bound}
For the \emph{ordinary} crossing number the unit constraint is dropped, and a one-page
(book) layout settles the order outright. Order the $n=3^{d}$ vertices on a line by their
base-$3$ value (equivalently, lexicographically), and draw every edge as an arc on one side;
call this layout $\Lambda$. Two edges cross iff their vertex intervals strictly interleave.
The \emph{span} of an edge is the distance between its endpoints.

\begin{lemma}\label{lem:span}
$\displaystyle\sum_{e}\operatorname{span}(e)=\tfrac23\,(n^{2}-n)$, where $\operatorname{span}(e)$
is the difference of the endpoint positions.
\end{lemma}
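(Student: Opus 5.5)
The proof is a direct double count, organised by the coordinate that each edge changes. Under the base-$3$ ordering of $\Lambda$, the word $(a_1,\dots,a_d)$ sits at position $\sum_{j=1}^{d} a_j\,3^{\,j-1}$. The key observation is that the span of an edge depends only on the coordinate it changes and on the two symbols involved.

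First, take an edge that changes coordinate $j$ from symbol $s$ to symbol $s'$ and leaves all other coordinates fixed. Its endpoint positions differ by exactly $|s-s'|\,3^{\,j-1}$. All other digits cancel, so no carries or interactions between digits arise.

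Second, group the edges by the coordinate they change. For a fixed $j$ and a fixed choice of the other $d-1$ coordinates ($3^{d-1}$ choices), the coordinate-$j$ edges form a triangle $K_3$ on the symbols $\{0,1,2\}$. Its three edges $01,12,02$ have symbol differences $1,1,2$, summing to $4$. The edges changing coordinate $j$ therefore contribute $3^{d-1}\cdot 4\cdot 3^{\,j-1}$ in total.

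Third, sum over $j$ using a geometric series:
\[
\sum_e \operatorname{span}(e)=4\cdot 3^{d-1}\sum_{j=1}^{d}3^{\,j-1}
=4\cdot 3^{d-1}\cdot\frac{3^{d}-1}{2}=\frac{2}{3}\,3^{d}\,(3^{d}-1)=\frac23\,(n^2-n).
\]
As sanity checks, the edge count used implicitly is $d\cdot 3^{d-1}\cdot 3=dn$, consistent with $2d$-regularity. For $d=1$ the sum is $1+1+2=4=\tfrac23(9-3)$.

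None of these steps is a genuine obstacle. The only point needing care is the first one: the position map must be read as a linear function of the digits, so that changing one digit yields a span independent of the other coordinates. This is what makes the count split as a product.
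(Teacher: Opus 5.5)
Your proof is correct and follows the paper's argument: group the edges by the coordinate they change, observe that each coordinate-$j$ triangle has spans $w,w,2w$ (total $4w$, with $w$ the digit weight), and sum the resulting geometric series over $j$. The only difference is that you give coordinate $j$ the weight $3^{\,j-1}$, whereas the paper's lexicographic order gives it $3^{\,d-j}$; this just reverses the coordinate order, sums the same set of weights, and leaves the total unchanged.
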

\begin{proof}
An edge changing coordinate $i$ (of weight $3^{\,d-i}$ in the base-$3$ value) joins two
vertices whose values differ by that weight, so within each of the $3^{\,d-1}$ triangles for
coordinate $i$ the three spans are $3^{\,d-i},3^{\,d-i},2\cdot3^{\,d-i}$, summing to
$4\cdot3^{\,d-i}$. Hence
$\sum_e\operatorname{span}(e)=\sum_{i=1}^{d}3^{\,d-1}\cdot4\cdot3^{\,d-i}
=4\cdot3^{\,2d-1}\sum_{i=1}^{d}3^{-i}=2\cdot3^{\,2d-1}(1-3^{-d})=\tfrac23(9^{d}-3^{d})
=\tfrac23(n^{2}-n)$.
\end{proof}

\begin{theorem}\label{thm:upper}
The one-page layout $\Lambda$ satisfies the recursion $X(n)=3\,X(n/3)+g(n)$ with the
\emph{exact} per-level count
\[
  g(n)=\tfrac79\,n^{2}-2\,n\log_3 n-\tfrac13\,n\ <\ \tfrac79\,n^{2},
\]
and therefore $\cro(H)\le X(n)<\tfrac76\,n^{2}=O(n^{2})$.
\end{theorem}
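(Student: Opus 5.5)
The plan is to split the one-page layout $\Lambda$ by the \emph{most significant} base-$3$ digit. The first coordinate carries weight $m:=n/3$, so the positions $\{0,\dots,n-1\}$ break into three consecutive blocks $B_0,B_1,B_2$ of length $m$, one per value of $a_1$. Inside block $B_j$ the vertices are ordered by the remaining $d-1$ digits. The edges changing one of those digits therefore form a translate of the layout $\Lambda$ for the instance on $m$ vertices. Since interleaving of intervals is translation-invariant, each block contributes exactly $X(n/3)$ crossings. Two inner edges lying in different blocks occupy disjoint intervals and never interleave. Every remaining crossing therefore involves at least one \emph{top} edge, meaning an edge changing $a_1$. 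This gives the recursion $X(n)=3X(n/3)+g(n)$ with
\[
g(n)=(\text{top--top crossings})+(\text{top--inner crossings}).
\]

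\emph{Top--inner term.} The top edges are the triangles on $\{x,\,x+m,\,x+2m\}$ for $0\le x<m$, so every vertex carries exactly two top edges. Take an inner edge in block $B_j$ with endpoint offsets $a<b$. A top edge interleaves with it iff exactly one endpoint of the top edge lies strictly inside $(a,b)$. The other endpoint then automatically lies in a different block, hence outside $[a,b]$. So the inner edge is crossed by exactly $2(b-a-1)$ top edges. Summing over the three blocks and using Lemma~\ref{lem:span} at size $m$, together with the edge count $m\log_3 m$ of the smaller instance, gives
\[
6\Bigl(\tfrac23(m^2-m)-m\log_3 m\Bigr)=4m^2-4m-6m\log_3 m .
\]
Substituting $m=n/3$ and $\log_3 m=\log_3 n-1$, this equals $\tfrac49n^2-2n\log_3 n+\tfrac23 n$.

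\emph{Top--top term.} Write the three top edges with offset $x$ as $A_x=(x,x+m)$, $B_x=(x+m,x+2m)$ and $C_x=(x,x+2m)$. For $x<y$ I check each of the nine ordered type pairs for strict interleaving; for instance $A_x,A_y$ always interleave via $x<y<x+m<y+m$. I also check the pairs with $x=y$, which share an endpoint and never count, and the pairs with $x>y$. The total should come to $3m^2-3m=\tfrac13n^2-n$. Adding this to the top--inner term yields $g(n)=\tfrac79n^2-2n\log_3 n-\tfrac13 n$ exactly, and $g(n)<\tfrac79 n^2$ is immediate. I expect this case table to be the main obstacle: it is where sign and off-by-one slips happen. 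I would cross-check it against the base case $n=3$ (a single triangle, $g=0$) and against $n=9$, where the predicted $X(9)=3\cdot0+g(9)=63-36-3$ can be counted by hand.

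Finally, unroll the recursion, $X(n)=\sum_{k\ge0}3^kg(n/3^k)$ with $X(1)=0$. This gives $X(n)<\tfrac79 n^2\sum_{k\ge0}3^{-k}=\tfrac76 n^2$. Since $\Lambda$ is a drawing of $H$, $\cro(H)\le X(n)$.
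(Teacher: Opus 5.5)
You follow the paper's route: split $\Lambda$ into the three consecutive blocks of the leading digit, take $3X(n/3)$ from the blocks, and divide the rest into top--top and top--inner crossings. Your top--inner count is correct. You say each inner edge of span $s$ is crossed by exactly $2(s-1)$ top edges; this is the same double count the paper does from the long edges' side, and it gives $\tfrac49n^2-2n\log_3 n+\tfrac23 n$.

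The one step you have not proved is the top--top term. You only assert that it ``should come to'' $3m^2-3m$. Your plan to also check ``the pairs with $x>y$'' would, as a separate pass, count every crossing twice and give $6m^2-6m$.

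The missing observation is this. For labels $x<y$ you have $0<y-x<m$, so the six endpoints always appear in the fixed order $x<y<x+m<y+m<x+2m<y+2m$. The interleaving pattern of the nine ordered type pairs therefore does not depend on $x,y$. Reading it off:
\begin{itemize}
\item $(A_x,A_y)$, $(A_x,C_y)$, $(B_x,A_y)$, $(B_x,B_y)$, $(C_x,B_y)$ and $(C_x,C_y)$ interleave;
\item $(A_x,B_y)$ is disjoint;
\item $(B_x,C_y)$ and $(C_x,A_y)$ are nested.
\end{itemize}
Calling the smaller label $x$ already lists each unordered pair of top edges with distinct labels exactly once. Top edges with the same label share an endpoint, so they never cross. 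Hence the top--top term is $6\binom{m}{2}=3m^2-3m=\tfrac13n^2-n$, as required.

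With this filled in, the rest of your argument goes through as in the paper: the sum giving $g(n)$, the bound $g(n)<\tfrac79n^2$, and the unrolling to $X(n)<\tfrac76n^2$. Your sanity value $X(9)=24$ matches the paper's one-page counts.
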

\begin{proof}
The $n$ vertices split into three consecutive blocks of $n/3$, each read on its own the layout
$\Lambda$ of the instance on $n/3$ vertices; the only inter-block edges are the $n$
coordinate-$1$ (``long'') edges --- three per internal label $x\in\{0,\dots,n/3-1\}$, of types
$\{0,1\},\{1,2\},\{0,2\}$ --- so $X(n)=3\,X(n/3)+g(n)$ with $g$ the crossings meeting a long
edge; write $g=g_{\ell\ell}+g_{\ell s}$.

\emph{Long--long.} Two long edges of the same label are nested or share an endpoint, so they do
not cross. For labels $x<y$ the six endpoints fall in the order $0x,0y,1x,1y,2x,2y$, and of the
$3\times3$ type pairs exactly six interleave; hence
$g_{\ell\ell}=6\binom{n/3}{2}=\tfrac13 n^{2}-n$.

\emph{Long--short.} A long edge of type $\{p,q\}$ crosses a within-block edge only in its two
endpoint blocks $p,q$, and there exactly those whose interval strictly contains the label $x$;
an interior (intermediate) block is spanned entirely, giving nesting, not crossing. Thus each
label contributes $6$ times the number of sub-layout edges strictly containing a fixed vertex
position, i.e.\ $g_{\ell s}=6\bigl(\sum_{e}\operatorname{span}(e)-m'\bigr)$ over the $H(n/3)$
sub-layout, where $m'=(\log_3 n-1)\tfrac n3$ is its edge count. By Lemma~\ref{lem:span},
$\sum_e\operatorname{span}(e)=\tfrac23\bigl((n/3)^2-n/3\bigr)$, so
$g_{\ell s}=\tfrac49 n^{2}-2\,n\log_3 n+\tfrac23 n$.

Adding, $g(n)=\tfrac79 n^{2}-2\,n\log_3 n-\tfrac13 n<\tfrac79 n^{2}$. Since $g(m)<\tfrac79 m^{2}$
for every $m$, unrolling with $X(1)=0$ gives
$X(n)=\sum_{k\ge0}3^{k}g(n/3^{k})<\tfrac79 n^{2}\sum_{k\ge0}3^{-k}=\tfrac76 n^{2}$, and
$\cro(H)\le X(n)$.
\end{proof}

With Theorem~\ref{thm:lower} this brackets the ordinary crossing number as
$\Omega(n)\le\cro(H)\le O(n^{2})$, both ends unconditional; the one-page constant $\tfrac76$ is
sharp for $\Lambda$ (its per-level count is exact), and only a \emph{different} drawing, not a
better count of this one, could lower it further. The one-page layout $\Lambda$ is a
\emph{convex} rectilinear drawing (vertices on a line), so it bounds the convex crossing
number, and the whole rectilinear chain stays quadratic,
$\cro(H)\le\mathrm{rcr}(H)\le\mathrm{cr}^{*}(H)=O(n^{2})$~\cite{DujmovicLaRose,BienstockDean}
(convex and rectilinear drawings of cube-like graphs are studied for the hypercube by Anti\'c,
Fuladi, Limbach and Valtr~\cite{AFLV}). The extra logarithm in $\udcr$ is therefore not a cost
of rectilinearity --- it is a cost of the \emph{unit-length} constraint alone. A separate \emph{separated} drawing --- three
shrunken disjoint copies of the $H(n/3)$ drawing joined by radial bundles --- gives the
sharper explicit form
\begin{equation}\label{eq:closed}
  \cro(H)\ \le\ \tfrac12\,n(n-3)\ =\ \binom{n}{2}-n,
\end{equation}
which we verified for all $n\le729$; it follows from the same recursion whenever the radial
bundle cost obeys the per-level bound $B(n)\le n^{2}/3$ (the bundle--bundle part,
$3\binom{n/3}{2}<n^{2}/6$, always does; the bundle--copy part exceeds it only at the largest
computed level, $n=729$, without breaking the global bound). The computational evidence puts
the truth at the top of the band, $\cro(H)=\Theta(n^{2})$.

\begin{remark}[Two pages]
The order is robust across drawing models. Splitting the arcs of $\Lambda$ onto \emph{two}
pages of a book (spine order unchanged) --- assigning an edge to the page given by the parity
of the coordinate it changes, so that only same-parity edges can meet --- roughly halves the
one-page count: $18,\,288,\,3456,\,35316,\,337770$ for $d=2,\dots,6$, against
$24,\,468,\,5832,\,60912,\,587088$ on one page. The ratio to $n^{2}$ settles (it is not
$\Theta(n^{2}\log n)$), so the two-page book crossing number~\cite{BernhartKainen} is also
$\nu_2(H)=\Theta(n^{2})$, below the one-page constant but above the non-book separated drawing
\eqref{eq:closed}. The second page only improves the constant, not the order; it does not
reflect the graph's structure, which is \emph{several copies} (the recursion
\eqref{eq:rec}) rather than two half-planes. The two representations that do fit $H(d,3)$ are
the recursive copies and, for the unit count, the \emph{rectilinear} unit drawing --- $\udcr$
is the rectilinear crossing number under the unit-length constraint --- so the one-page and
rectilinear models, not the book, are the ones aligned with the graph.
\end{remark}

\begin{figure}[htbp]\centering
  \includegraphics[width=.52\linewidth]{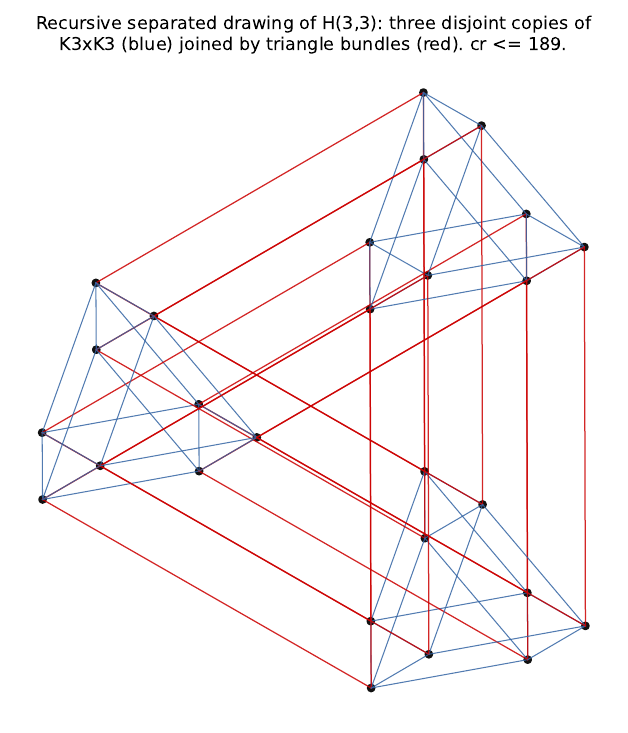}
  \caption{The separated recursive drawing of $H(3,3)$: three disjoint copies of
  $K_3\bx K_3$ (blue, no copy--copy crossings) joined by triangle bundles (red), which
  carry all the crossings. This non-unit drawing has fewer crossings than the unit
  drawing on the same vertices.}\label{fig:recsep}
\end{figure}

\begin{figure}[tp]\centering
  \includegraphics[width=\linewidth]{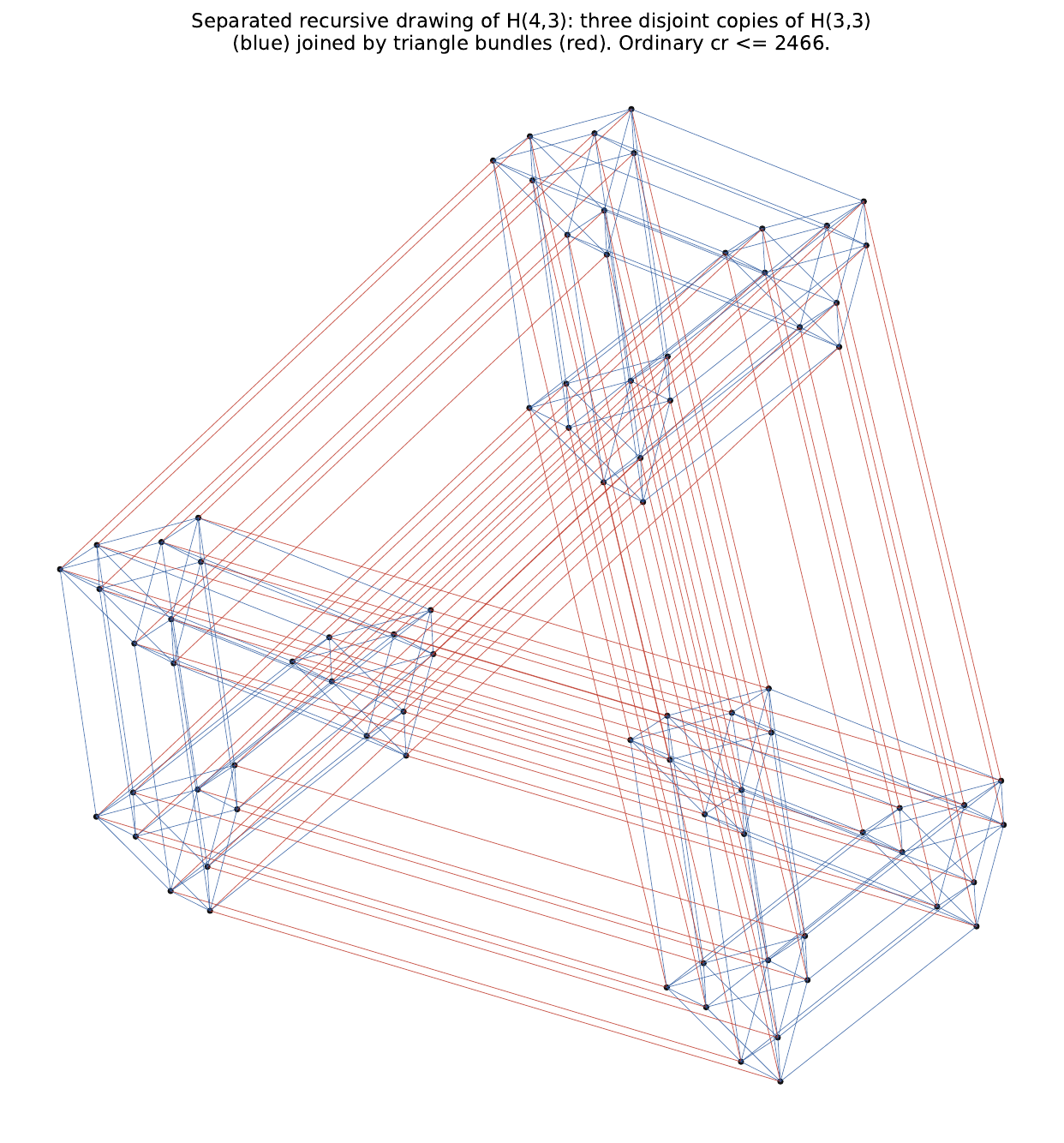}
  \caption{The separated recursive drawing one level up: $H(4,3)$ as three disjoint
  copies of the $H(3,3)$ drawing of Figure~\ref{fig:recsep} (blue) joined by triangle
  bundles (red). This free (non-unit) drawing stays well below the unit immersion and
  inside the $O(n^{2})$ band $\tfrac12 n(n-3)$.}
  \label{fig:sepd4}
\end{figure} The unit-distance drawing carries more
crossings than the separated one at every level: their ratio is a factor $\Theta(\log n)$,
the conjectured separation between the two invariants.

\subsection{The bracket in the number of vertices}
All the bounds are bands in $n$, and constant factors or fractions do not move them; only the
exponent of $n$ and the logarithmic factor matter. The instance has $n$ vertices and
$m=\Theta(n\log n)$ edges (a sparse graph), so a quadratic crossing number sits far below the
maximum $\Theta(n^{4})$ of $K_n$. The bounds by status are
\[
  \underbrace{\tfrac{7}{6}\,n^{2}=O(n^{2})}_{\text{Thm.~\ref{thm:upper}}}\ \ge\ \cro(H)\ \ge\
  \underbrace{\Omega(n^{2})}_{\text{Thm.~\ref{thm:tight}}},
  \qquad \Omega(n^{2})\le\ \udcr(H)\ \le O(n^{2}\log n)\ \ (\text{Thm.~\ref{thm:udcr}}).
\]
Both ends of the ordinary bracket match: $\cro(H)=\Theta(n^{2})$ unconditionally (upper bound
from a one-page layout, Theorem~\ref{thm:upper}; lower bound from bisection width,
Theorem~\ref{thm:tight}). The sharper explicit constant $\tfrac12 n(n-3)$ is verified for
$n\le729$. The unit count is sandwiched between $\Omega(n^{2})$ and $O(n^{2}\log n)$
(Theorem~\ref{thm:udcr}): the upper bound now matches the Minkowski count's order, so only the
matching lower bound $\Omega(n^{2}\log n)$ is left for the full $\Theta(n^{2}\log n)$.

\subsection{The exact order: $\cro(H)=\Theta(n^{2})$}
A balanced \emph{bisection} of a graph on $n$ vertices splits them into two halves of size
$n/2$; the \emph{bisection width} $\mathrm{bw}(G)$ is the least number of edges across such a
split. Two classical facts combine to force many crossings.

\begin{lemma}\label{lem:bw}
$\mathrm{bw}(H)\ge \tfrac34\,n=\Omega(n)$.
\end{lemma}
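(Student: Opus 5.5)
The plan is the classical multicommodity-flow (canonical path) argument: embed the complete directed demand graph on the $n=3^d$ vertices into $H$ with low congestion, and compare that congestion with the demand that must cross any balanced cut. The constant $\tfrac34$ should come out exactly, which suggests this is the intended route.

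\emph{Step 1: canonical paths.} For every ordered pair $(a,b)$ of distinct words, take the path that corrects the coordinates in the fixed order $1,2,\dots,d$. At step $i$, if $a_i\neq b_i$, it moves from $(b_1,\dots,b_{i-1},a_i,a_{i+1},\dots,a_d)$ to $(b_1,\dots,b_{i-1},b_i,a_{i+1},\dots,a_d)$ along a single edge changing coordinate $i$. Every step is an edge of $H$, so this is a path of length $d_H(a,b)$.

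\emph{Step 2: congestion count.} Fix a directed edge $x\to y$ that changes coordinate $i$. Suppose the canonical path of $(a,b)$ uses this edge. Then $a$ agrees with $x$ in coordinates $i,\dots,d$, and $a_1,\dots,a_{i-1}$ are free, giving $3^{i-1}$ choices. Likewise $b$ agrees with $y$ in coordinates $1,\dots,i$, and $b_{i+1},\dots,b_d$ are free, giving $3^{d-i}$ choices. Hence each directed edge carries exactly $3^{d-1}=n/3$ ordered pairs. This count is the crux of the proof, and the only point needing care is that the directed count is the right one to use in Step 3.

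\emph{Step 3: cut comparison.} Let $(S,\bar S)$ be a balanced bisection. Since $n$ is odd, take $|S|=\lfloor n/2\rfloor$ and $|\bar S|=\lceil n/2\rceil$, so that $|S|\,|\bar S|=(n^2-1)/4$. Every ordered pair $(s,t)$ with $s\in S$ and $t\in\bar S$ has a canonical path that must traverse some cut edge in the direction $S\to\bar S$. By Step 2, each cut edge, used in that one direction, serves at most $n/3$ such pairs. Therefore
\[
  |E(S,\bar S)|\cdot\tfrac n3\ \ge\ |S|\,|\bar S|=\tfrac{n^2-1}{4},
\]
which gives $|E(S,\bar S)|\ge \tfrac34\,(n-1/n)$, and hence at least $\lceil\tfrac34 n-\tfrac3{4n}\rceil$ edges cross the cut. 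For the stated $\tfrac34 n$, I would either read ``halves of size $n/2$'' as a fractional idealization, or note that the two bounds differ by less than $1$ and round. Either way this yields $\mathrm{bw}(H)\ge\tfrac34 n=\Omega(n)$.

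The main obstacle is only bookkeeping. One must count directed usage, so that the congestion is $n/3$ rather than $2n/3$, and this is why the constant $\tfrac34$ emerges rather than $\tfrac38$. The parity issue of odd $n$ is the remaining wrinkle and is handled by the rounding in Step 3.
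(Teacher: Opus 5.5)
Your argument is correct, but it takes a different route from the paper. The paper argues spectrally. The Laplacian eigenvalues of $K_3^{\bx d}$ are $d$-fold sums of $\{0,3,3\}$, so $\lambda_2=3$. For a balanced $\pm1$ cut vector $x\perp\mathbf 1$ this gives $|E(S,\bar S)|=\tfrac14x^{\top}Lx\ge\tfrac34 n$.

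You instead route the complete directed demand along coordinate-ordered canonical paths. You show that every directed edge carries exactly $3^{d-1}=n/3$ ordered pairs. The count is right, and it is consistent with $\sum_{(a,b)}d_H(a,b)=\tfrac23dn^{2}=2dn\cdot\tfrac n3$.

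The two methods prove the same inequality, $|E(S,\bar S)|\ge 3|S|\,|\bar S|/n$ for every $S$, so neither is sharper. Yours gets it from congestion $n/3$. The spectral one gets it from $\lambda_2=3$ applied to $x=\mathbf 1_S-\tfrac{|S|}{n}\mathbf 1$, whose squared norm is $|S|\,|\bar S|/n$.

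What each buys:
\begin{itemize}
\item The spectral proof is shorter once the product-spectrum fact is granted.
\item Yours needs no eigenvalues. As a by-product it gives an explicit congestion-$O(n)$ embedding of $K_n$ into $H$. That is exactly the input of the Leighton-type embedding method, which would yield $\cro(H)=\Omega(n^{2})$ directly from $\cro(K_n)=\Theta(n^{4})$, without going through bisection width.
\end{itemize}
You are also more careful than the paper about parity. Since $n=3^{d}$ is odd, no $\pm1$ vector is orthogonal to $\mathbf 1$, and the honest spectral bound is the same $\tfrac34\bigl(n-\tfrac1n\bigr)$ that you obtain.

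One repair is needed in Step 3. Saying ``the two bounds differ by less than $1$'' does not by itself justify rounding up to $\tfrac34 n$, because $\tfrac34 n$ is not an integer. What you need is that the deficit $\tfrac{3}{4n}$ is smaller than the fractional part of $\tfrac34 n=3^{d+1}/4$. That fractional part is $\tfrac14$ or $\tfrac34$, because $3^{d+1}\equiv1$ or $3\pmod 4$. So the condition holds for every $n\ge9$, i.e.\ $d\ge2$. At $d=1$ it genuinely fails: $K_3$ split $1+2$ has cut $2<\tfrac94$.
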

\begin{proof}
The Laplacian of a Cartesian product $K_3^{\,\square d}$ has eigenvalues the $d$-fold sums of
the Laplacian eigenvalues $\{0,3,3\}$ of $K_3$; the smallest nonzero one (the algebraic
connectivity) is therefore $\lambda_2=3$. For any balanced split with $\pm1$ indicator
$x\perp\mathbf 1$, the cut size is $\tfrac14 x^{\top}Lx\ge\tfrac14\lambda_2\|x\|^{2}
=\tfrac14\cdot3\cdot n$, so $\mathrm{bw}(H)\ge\tfrac34 n$.
\end{proof}

\begin{theorem}\label{thm:tight}
$\cro(H)=\Theta(n^{2})$.
\end{theorem}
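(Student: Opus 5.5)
The upper bound is already in hand: Theorem~\ref{thm:upper} gives $\cro(H)\le\tfrac76 n^{2}=O(n^{2})$ from the one-page layout $\Lambda$. So the work is the matching lower bound $\cro(H)=\Omega(n^{2})$. I would get it by combining the bisection width bound of Lemma~\ref{lem:bw} with the classical crossing--bisection inequality of Leighton, as sharpened by Pach, Shahrokhi and Szegedy. For any graph $G$ with $n$ vertices and degrees $d_v$,
\[
  \mathrm{bw}(G)\ \le\ c\Bigl(\sqrt{\cro(G)}+\sqrt{\textstyle\sum_v d_v^{2}}\Bigr)
\]
for an absolute constant $c$. This inequality is proved by planarizing an optimal drawing, which adds a vertex at each crossing, and then applying the Lipton--Tarjan separator theorem with vertex weights, lifted back to an edge bisection.

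The steps in order are as follows.

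\emph{Step 1.} Substitute the known quantities. Lemma~\ref{lem:bw} gives $\mathrm{bw}(H)\ge\tfrac34 n$. Since $H$ is $2d$-regular with $d=\log_3 n$, we have $\sum_v d_v^{2}=4nd^{2}=4n\log_3^{2}n$.

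\emph{Step 2.} Rearrange. The inequality becomes
\[
  \sqrt{\cro(H)}\ \ge\ \tfrac{3}{4c}\,n-2\sqrt{n}\,\log_3 n .
\]
The subtracted term is $o(n)$, so for all sufficiently large $d$ the right side is at least $\tfrac{3}{8c}n$. Squaring gives $\cro(H)\ge\tfrac{9}{64c^{2}}n^{2}$.

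\emph{Step 3.} Handle the small cases. The finitely many small $d$ are absorbed into the constant, using for instance Theorem~\ref{thm:lower}, which gives $\cro(H)\ge n/3>0$ there. Together with Theorem~\ref{thm:upper} this yields $\cro(H)=\Theta(n^{2})$.

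The main obstacle is justifying the crossing--bisection inequality in the form I need. The degree term must be $\sum_v d_v^{2}$ rather than something cruder like $n\Delta^{2}$ with a worse dependence, because the square-root comparison only works when the degree term is $o(n^{2})$. Here the degree is $\Theta(\log n)$, so even the cruder versions still give $\sum_v d_v^{2}=O(n\log^{2}n)=o(n^{2})$, and the argument goes through with room to spare. I would therefore cite the Pach--Shahrokhi--Szegedy form directly rather than reprove it.

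If a self-contained proof were wanted, I would planarize the drawing and give each original vertex weight $1/n$ and each crossing vertex weight $0$. I would then apply the weighted planar separator theorem to obtain a separator of size $O(\sqrt{\cro(H)+n})$. Finally, I would charge the separator vertices to incident edges, each original vertex costing at most $2d$ edges, and rebalance to an exact bisection. This last rebalancing is the fiddly part, and it is where the $\sqrt{\sum_v d_v^{2}}$ term arises.
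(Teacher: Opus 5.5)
Your proof is correct and follows the paper's argument: the upper bound is Theorem~\ref{thm:upper}, and the lower bound feeds $\mathrm{bw}(H)\ge\tfrac34 n$ (Lemma~\ref{lem:bw}) and $\sum_v\deg(v)^2=4n(\log_3 n)^2$ into the Leighton / Pach--Shahrokhi--Szegedy crossing--bisection inequality, and your square-root form is equivalent up to constants to the paper's $\mathrm{bw}(G)^2\le\beta\bigl(\cro(G)+\sum_v\deg(v)^2\bigr)$. One caution, which affects only your optional self-contained sketch: a Lipton--Tarjan vertex separator with $2d$ edges charged to each original separator vertex gives only $\mathrm{bw}(H)=O\bigl(d\sqrt{\cro(H)+n}\bigr)$, hence merely $\cro(H)=\Omega(n^2/\log^2 n)$, so the additive $\sum_v\deg(v)^2$ term must come from applying a degree-sensitive planar edge separator of size $O\bigl(\sqrt{\sum_w\deg(w)^2}\bigr)$ (Gazit--Miller) to the planarization, whose crossing vertices have degree $4$, and not from the final rebalancing --- which is why citing the inequality, as your main line does, is the right call.
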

\begin{proof}
The upper bound is Theorem~\ref{thm:upper}. For the lower bound, the bisection--crossing
inequality of Leighton~\cite{Leighton} and of Sýkora--Vrťo~\cite{SykoraVrto,PSS} gives an absolute constant
$\beta>0$ with $\mathrm{bw}(G)^{2}\le \beta\bigl(\cro(G)+\sum_v \deg(v)^{2}\bigr)$ for every
graph $G$. Here every vertex has degree $2d=2\log_3 n$, so
$\sum_v\deg(v)^{2}=n\,(2\log_3 n)^{2}=O(n\log^{2}n)$. With $\mathrm{bw}(H)\ge\tfrac34 n$
(Lemma~\ref{lem:bw}),
\[
  \cro(H)\ \ge\ \tfrac1\beta\,\mathrm{bw}(H)^{2}-\sum_v\deg(v)^{2}
  \ \ge\ \tfrac{9}{16\beta}\,n^{2}-O(n\log^{2}n)\ =\ \Omega(n^{2}).
\]
\end{proof}

So the ordinary crossing number is settled up to a constant: $\cro(H)=\Theta(n^{2})$.

\subsection{Bounding the unit count}
For the unit-distance crossing number a concentration argument gives the tight order from
above, matching the count of the Minkowski drawing itself.

Regard $P(a)=\sum_{i=1}^{d}T_i[a_i]$ as a sum of $d$ independent bounded vectors, each uniform
on the three vertices of a unit triangle, with mean $0$ and, for a generic twist, non-degenerate
covariance a fixed positive multiple of the identity; the total covariance is $\Theta(d)\,I$.
Two concentration estimates bound how many of the $n$ points a fixed unit disk can hold; we use
the first for the sharp order and give the second as a self-contained fallback.

\begin{lemma}[Sharp density, by a planar concentration bound]\label{lem:conc}
At a generic Minkowski realization every open unit disk contains $O(n/d)$ of the $n$ vertices.
\end{lemma}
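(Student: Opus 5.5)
The plan is a Fourier-analytic small-ball estimate. I would count vertices in a disk probabilistically. Let $a=(a_1,\dots,a_d)$ be uniform on $\{0,1,2\}^d$ and set $S=P(a)=\sum_i X_i$, where $X_i=T_i[a_i]$ are independent and each is uniform on the three vertices of a unit triangle. Then the number of vertices in an open unit disk $B(x,1)$ is $n\cdot\Pr[S\in B(x,1)]$, with multiplicity if points coincide. So it suffices to show that the concentration function satisfies $Q(S;1):=\sup_x\Pr[S\in B(x,1)]=O(1/d)$, uniformly in $x$.

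\textbf{Step 1 (Esseen's inequality).} The planar Esseen bound gives $Q(S;1)\le C\int_{|t|\le 1}|\varphi_S(t)|\,dt$ with an absolute constant $C$, where $\varphi_S(t)=\prod_i\varphi_i(t)$ and $\varphi_i(t)=\tfrac13\sum_{k}e^{\mathrm{i}\,t\cdot T_i[k]}$. I would either quote this, or prove it by the standard route: majorize the indicator of the unit disk by a nonnegative function whose Fourier transform is supported in a ball of bounded radius, and rescale. If the support radius comes out as some $r_0>1$ rather than $1$, Step 2 still applies after covering $B(x,1)$ by $O(1)$ disks of radius $1/r_0$ and rescaling $t$.

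\textbf{Step 2 (a uniform bound on each factor).} I would compute $|\varphi_i(t)|^2=\tfrac13+\tfrac29\sum_{e}\cos(t\cdot e)$, the sum running over the three edge vectors $e$ of $T_i$. For $|t|\le1$ each $|t\cdot e|\le1$, so the inequality $1-\cos y\ge y^2/4$ gives
\[
|\varphi_i(t)|^2\le 1-\tfrac1{18}\sum_e (t\cdot e)^2 .
\]
The three edge vectors of a unit equilateral triangle form a tight frame, so $\sum_e(t\cdot e)^2=\tfrac32|t|^2$ for \emph{every} orientation of $T_i$. Hence $|\varphi_i(t)|\le 1-c|t|^2\le e^{-c|t|^2}$ with $c=1/24$, uniformly in the twist. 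This isotropy is the key point: the estimate needs no genericity and no control of how the orientations of different triangles interact.

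\textbf{Step 3 (integration).} Multiplying the $d$ factors gives
\[
Q(S;1)\le C\int_{\R^2}e^{-c\,d|t|^2}\,dt=\frac{C\pi}{c\,d}=O(1/d),
\]
so every open unit disk holds $O(n/d)$ vertices. Together with faithfulness at a generic realization (points distinct), this is the claim. The bound in fact holds at every Minkowski realization, counting multiplicity. A consistency check is the degenerate all-equal-twist case: there the points are multinomially weighted lattice points, and the largest weight is of order $3^d/d$, which matches the bound.

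\textbf{Main obstacle.} The only delicate step is Step 1: fixing a clean form of the two-dimensional Esseen inequality at scale $1$ and the constant radius of the frequency window. If I want to avoid citing it, I would first try the explicit kernel route: take $\psi=\mathbf 1_{B(0,\rho)}*\mathbf 1_{B(0,\rho)}$ suitably normalized, whose Fourier transform is the squared Airy function. This gives a positive weight on the frequency side. Then I would bound $\Pr[S\in B(x,\rho)]\lesssim \mathbb E\,\psi(S-x)$ and apply Fourier inversion. Because the Airy weight is not compactly supported, the tail $|t|>1$ has to be handled separately, using only the trivial bound $|\varphi_S|\le1$ together with the decay of that weight. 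That tail estimate is where I expect the work to be.
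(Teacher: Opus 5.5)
Your proof is correct, and it takes a more explicit route than the paper's.

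The paper's proof is essentially a citation. It invokes a covariance form of Esseen's planar concentration inequality and asserts that total covariance $\Theta(d)I$ alone yields $Q(P,1)=O(1/d)$. You instead use the characteristic-function form $Q(S;1)\le C\int_{|t|\le1}|\varphi_S(t)|\,dt$ and verify its input by hand. The tight-frame identity $\sum_e(t\cdot e)^2=\tfrac32|t|^2$ gives $|\varphi_i(t)|\le e^{-|t|^2/24}$ on $|t|\le1$ for \emph{every} orientation, and a Gaussian integral then gives $Q(S;1)\le 24\pi C/d$.

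This buys three things the citation does not. First, an explicit constant. Second, uniformity over all twists, so ``generic'' is unnecessary for the upper bound, since counting with multiplicity dominates counting distinct points. This fits the fact, glossed over in the paper, that each summand has covariance exactly $\tfrac16 I$ at every orientation, not only at generic ones. Third, a precise statement of the non-degeneracy actually used, namely decay of $|\varphi_i|$ at unit scale. A covariance hypothesis by itself does not bound small-ball probabilities without an extra condition such as uniform boundedness of the summands, which the paper leaves implicit.

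Your equal-twist multinomial check also shows that $O(n/d)$ cannot be improved. The paper's version is shorter. Yours is self-contained up to the standard kernel argument behind the Fourier form of Esseen's inequality.

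One correction to the fallback in your last paragraph. If $\psi=\mathbf 1_{B(0,\rho)}*\mathbf 1_{B(0,\rho)}$ lives in physical space, its Fourier weight is not compactly supported. Bounding the tail $\int_{|t|>1}\hat\psi\,|\varphi_S|$ by $|\varphi_S|\le1$ then leaves a constant independent of $d$, which swamps the $O(1/d)$ main term. That route cannot be closed with the trivial bound.

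Put the autocorrelation on the frequency side instead. Take $k=|\mathcal F^{-1}\mathbf 1_{B(0,1/2)}|^2$:
\begin{itemize}
\item $k$ is nonnegative;
\item its Fourier transform is a multiple of $\mathbf 1_{B(0,1/2)}*\mathbf 1_{B(0,1/2)}$, hence supported in $|t|\le1$;
\item $k$ is bounded below on the unit disk, since $\mathcal F^{-1}\mathbf 1_{B(0,1/2)}(x)$ is proportional to $J_1(|x|/2)/|x|$, whose first zero is at $|x|\approx7.66$.
\end{itemize}
Then $\Pr[S\in B(x,1)]\lesssim\mathbb E\,k(S-x)\lesssim\int_{|t|\le1}|\varphi_S(t)|\,dt$, with no tail and no covering by smaller disks. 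This is exactly the route you sketch in Step~1, so the ``main obstacle'' disappears.
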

\begin{proof}
For a generic twist the increments $T_i[a_i]$ span $\R^2$ but need \emph{not} lie on any common
lattice, so a lattice local limit theorem does not apply; we use instead the two-dimensional
\emph{concentration-function} inequality of Esseen (the planar Kolmogorov--Rogozin
bound)~\cite{Esseen}. For a sum of $d$ independent $\R^2$-valued vectors with total covariance
$\Theta(d)\,I$, it gives
\[
  Q(P,1)\ :=\ \sup_{x\in\R^2}\Pr\!\big[P(a)\in D(x,1)\big]\ =\ O\!\big(1/\det{}^{1/2}(\Theta(d)I)\big)\ =\ O(1/d),
\]
with \emph{no} lattice hypothesis --- only the non-degeneracy of the planar covariance. Hence any
unit disk contains at most $n\,Q(P,1)=O(n/d)$ of the $n$ points. (Numerically the maximum is
$\approx 2n/d$; when the twist happens to be lattice-supported the same $O(1/d)$ also follows from
the classical lattice local limit theorem~\cite{BhattacharyaRao}, as a check.)
\end{proof}

\begin{lemma}[Weaker density, elementary]\label{lem:concelem}
At a generic Minkowski realization every open unit disk contains $O(n/\sqrt d)$ of the $n$
vertices --- with no appeal to the central limit theorem.
\end{lemma}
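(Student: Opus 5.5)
I will prove the bound with an elementary second-moment (Chebyshev-type) argument applied to a one-dimensional projection, so that no local limit theorem or Esseen inequality is needed. The idea is to split the $d$ coordinates into a free block and a fixed block and show that the free block alone spreads the points over a range of size $\sqrt d$.

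\emph{Setup.} Fix an open unit disk $D(x,1)$. For a generic twist no two triangle orientations coincide modulo $120^\circ$. So for every unit direction $e$, each triangle $T_i$ has a nonzero projection spread. Concretely, the projection $\langle T_i[a_i],e\rangle$ has variance $\sigma_i^2(e)$ over uniform $a_i$. For an equilateral triangle centered at $0$ this variance equals $R^2/2=1/6$ in every direction, since the triangle's inertia tensor is isotropic. So the coordinate $\langle P(a),e\rangle$, with $a$ uniform, has variance exactly $d/6$, independent of the twist.

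Variance alone does not bound concentration, though. A sum can have large variance yet place much mass in a short interval. So I will use a Littlewood--Offord/Erd\H{o}s-style anti-concentration, which is elementary.

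\emph{Key step (anti-concentration).} Fix a direction $e$ not perpendicular to any triangle side. Then for each $i$ the three values $\langle T_i[k],e\rangle$ are distinct. For every $i$, some pair $k\ne k'$ has gap $|\langle T_i[k]-T_i[k'],e\rangle|\ge c_0>0$, where $c_0$ depends on the fixed genericity. With a suitable $e$ we can take $c_0\ge 1/2$: for an equilateral triangle of side $1$, the largest projected side gap is $\ge\cos30^\circ$ in any direction.

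Now reduce to Erd\H{o}s's theorem:
\begin{itemize}
\item For each coordinate $i$, choose such a pair $\{k_i,k_i'\}$ with gap at least $\sqrt3/2$.
\item Condition on which coordinates take a value in their chosen pair. A Chernoff (or simply Chebyshev) bound gives $\ge d/2$ such coordinates for all but an $O(1/\sqrt d)$-fraction of words, or we may simply count words.
\item Fix the coordinates outside the pairs and the set $S$ of coordinates using the pair. The projection is then $c+\sum_{i\in S}\epsilon_i g_i$ with $\epsilon_i\in\{0,1\}$ and gaps $|g_i|\ge\sqrt3/2$.
\item By the Erd\H{o}s--Littlewood--Offord lemma (Sperner's theorem on antichains), at most $\binom{|S|}{\lfloor |S|/2\rfloor}=O(2^{|S|}/\sqrt{|S|})$ choices place the sum in an interval of length $2<2\cdot(\sqrt3/2)\cdot 2$. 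To make the interval length at most one gap, cover the interval of length $2$ by $O(1)$ intervals of length $\sqrt3/2$.
\end{itemize}

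Summing over the conditioning, the fraction of words whose projection lies in the strip of width $2$ containing $D(x,1)$ is $O(1/\sqrt{d})+\Pr[|S|<d/3]$. Here $|S|$ is binomial$(d,2/3)$, so the latter term is $e^{-\Omega(d)}$. The disk lies in that strip, so it holds $O(n/\sqrt d)$ vertices.

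\emph{Main obstacle.} The delicate point is a clean uniform handling of the conditioning. The subsets $S$ vary, and the bound $O(2^{|S|}/\sqrt{|S|})$ must be multiplied by the number of completions outside the pairs. I would organize this as follows. Each word is encoded by a choice in $\{\text{pair}, \text{third vertex}\}^d$ together with a bit in each pair coordinate. For each fixed pattern, Sperner gives relative fraction $O(1/\sqrt{|S|})$. Averaging over patterns, weighted binomially, gives $O(1/\sqrt d)$. Only the genericity that makes each projected gap positive is used, and the constant $\sqrt3/2$ even removes that dependence. So the lemma holds with no central limit theorem.
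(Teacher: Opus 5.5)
Your proof is correct, and it shares the paper's skeleton. Both project onto a line, enclose the open unit disk in a strip of width $2$, and bound the one-dimensional concentration of $\sum_i\langle T_i[a_i],e\rangle$. The anti-concentration step, however, is genuinely different.

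The paper notes that each three-valued summand has L\'evy concentration at most $\tfrac23$ at scale $\delta$, and then invokes the Kolmogorov--Rogozin inequality as a black box. You instead do the following:
\begin{itemize}
\item condition on which coordinates land in a chosen pair $\{k_i,k_i'\}$ with projected gap $|g_i|\ge\sqrt3/2$;
\item reduce each fibre to a sum $c+\sum_{i\in S}\epsilon_i g_i$ with $\epsilon_i\in\{0,1\}$;
\item apply the Sperner (Erd\H{o}s--Littlewood--Offord) antichain bound;
\item average over $|S|\sim\mathrm{Bin}(d,\tfrac23)$ using a Chernoff or even Chebyshev tail.
\end{itemize}

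What your route buys:
\begin{itemize}
\item It is fully self-contained and combinatorial, which matches the ``elementary'' billing better than quoting Kolmogorov--Rogozin.
\item You use the widest projected side, which is at least $\cos30^\circ$ in \emph{every} direction for \emph{every} unit triangle, rather than pairwise separation. So your constant is absolute: independent of the twist, the direction and $d$.
\item By contrast, the paper's $\delta(u)$ is a minimum of pairwise projected gaps over all $d$ triangles, and it is not obviously bounded below as $d$ grows. Your observation would patch that too.
\item The same observation shows that the genericity assumptions and the variance computation in your setup are unnecessary.
\end{itemize}

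The paper's route, in exchange, is shorter. It handles heterogeneous summands directly through $\sum_i(1-Q_i)$, with no pairing or conditioning.

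One slip to correct. In the $\{0,1\}$ normalisation, after flipping signs so that all $g_i>0$, the antichain bound controls only half-open intervals of length $\min_i|g_i|$, not an interval of length $2$. For example, with all $g_i=\sqrt3/2$, a width-$2$ interval meets three consecutive levels, and the count then exceeds $\binom{|S|}{\lfloor|S|/2\rfloor}$. So the covering of the strip by three intervals of length $\sqrt3/2$ is essential, not optional. You do supply it, and it costs only a factor $3$.
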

\begin{proof}
Fix a generic direction $u\in S^1$ and project: $u\!\cdot\!P(a)=\sum_i u\!\cdot\!T_i[a_i]$ is a
sum of $d$ independent variables, each taking three values (with probability $\tfrac13$ each)
that are pairwise $\delta$-separated for some $\delta=\delta(u)>0$. Any interval of length
$<\delta$ misses at least one of the three, so the L\'evy concentration function satisfies
$Q(u\!\cdot\!T_i,\delta)\le\tfrac23$, i.e.\ $1-Q\ge\tfrac13$. The Kolmogorov--Rogozin
inequality~\cite{Rogozin} then gives, for any length $\lambda$,
$Q\bigl(u\!\cdot\!P,\lambda\bigr)\le C\,(\lambda/\delta+1)/\sqrt{\textstyle\sum_i(1-Q)}
=O(1/\sqrt d)$. A unit disk lies in a strip $\{u\!\cdot\!y\in[x,x+2]\}$, so it contains at most
$n\,Q(u\!\cdot\!P,2)=O(n/\sqrt d)$ of the points.
\end{proof}

\begin{theorem}\label{thm:udcr}
$\ \Omega(n^{2})\le\udcr(H)\le O(n^{2}\log n)$.
\end{theorem}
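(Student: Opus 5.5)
The lower bound needs no new work. Every strict faithful unit drawing is in particular a drawing of $H$ in the plane, so $\udcr(H)\ge\mathrm{rcr}(H)\ge\cro(H)$, and Theorem~\ref{thm:tight} gives $\cro(H)=\Omega(n^{2})$. Hence $\udcr(H)=\Omega(n^{2})$ at once.

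For the upper bound I will count the crossings of one fixed generic Minkowski drawing $D$; by Proposition~\ref{prop:rec} it is a strict faithful unit drawing, after an infinitesimal perturbation of the twist into general position if needed. The crossing number is bounded by a local pair count. Two unit segments can cross only if their midpoints are within distance $1$. So
\[
\mathrm{cr}_u(D)\ \le\ \sum_{e}\#\{f:\ \mathrm{mid}(f)\in D(\mathrm{mid}(e),1)\}.
\]
It remains to bound how many edge midpoints any unit disk can hold. For an edge in coordinate $j$, the midpoint is
\[
\sum_{i\ne j}T_i[a_i]+\tfrac12\bigl(T_j[s]+T_j[r]\bigr).
\]
Fix $j$ and the edge type $\{r,s\}$. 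These midpoints are the points of the $(d-1)$-fold Minkowski configuration, translated by a fixed vector. Lemma~\ref{lem:conc}, applied to $d-1$ summands, then says a unit disk contains $O(3^{d-1}/d)=O(n/d)$ of them. Summing over the three types and the $d$ coordinates, any unit disk contains $O(n)$ edge midpoints. There are $m=\Theta(n\log n)$ edges, so
\[
\mathrm{cr}_u(D)\le m\cdot O(n)=O(n^{2}\log n).
\]
This gives $\udcr(H)\le O(n^{2}\log n)$.

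The key subtlety is a gain of $\log$ over the crude $m^2=n^2\log^2 n$ pair count. That gain comes entirely from the concentration bound, so the step to verify carefully is that Lemma~\ref{lem:conc} applies uniformly to the $3d$ midpoint subfamilies. Each subfamily is a sum of $d-1$ independent uniform triangle-vertex variables with covariance $\Theta(d)I$. The Esseen bound is uniform in the shift and depends only on the nondegenerate covariance, which holds for a generic twist. So the constant should not depend on $j$ or the type.

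As a fallback, I would use Lemma~\ref{lem:concelem} instead. It gives only $O(n/\sqrt d)$ points per disk, hence $O(n\sqrt d)$ midpoints per disk and $O(n^{2}\log^{3/2}n)$ crossings. That is a weaker but still subquadratic-in-$m$ bound, and it shows the argument's shape does not hinge on the sharp lemma.
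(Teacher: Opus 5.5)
Your proof is correct and is essentially the paper's. The lower bound is inherited from $\cro(H)=\Omega(n^{2})$ (Theorem~\ref{thm:tight}), and the upper bound uses the concentration Lemma~\ref{lem:conc} to show that only $O(n)$ edges can meet a given edge of a generic Minkowski drawing, then sums over the $m=dn$ edges; the only difference is cosmetic, since you apply the lemma to edge midpoints split into $3d$ translates of the $(d-1)$-fold configuration rather than to the vertices near the edge weighted by degree $2d$, which gives the same $3d\cdot O(n/d)=O(n)$ count and applies the lemma to an actual unit disk rather than a $2$-neighbourhood.
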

\begin{proof}
The lower bound is immediate: $\udcr(H)\ge\cro(H)=\Omega(n^{2})$ by Theorem~\ref{thm:tight}.
For the upper bound we bound the crossings of the Minkowski drawing. Every edge is a unit
segment, so an edge can be crossed only by edges having an endpoint within distance $2$ of it.
By Lemma~\ref{lem:conc} the $2$-neighbourhood of a unit segment contains $O(n/d)$ vertices,
each of degree $2d$; hence each edge meets at most $O(n/d)\cdot 2d=O(n)$ other edges, and
crosses at most that many. Summing over the $m=d\,n$ edges,
\[
  \udcr(H)\ \le\ c(n)\ \le\ \tfrac12\,m\cdot O(n)\ =\ \tfrac12\,(d\,n)\,O(n)\ =\ O(n^{2}\log n).
\]
\end{proof}

Thus the Minkowski drawing is order-optimal from above, $\udcr(H)=O(n^{2}\log n)$, and the
count itself is $\Theta(n^{2}\log n)$ in every computation to $n=729$ --- the single logarithm
coming from the sum over the $\log_3 n$ coordinate directions, each contributing $O(n^{2})$
crossings. The constant behind this $O$ is explicit.

\begin{proposition}[Explicit single-logarithm bound]\label{prop:udcrexplicit}
For radii in the relevant range $1\le r\le2$ the planar concentration bound of
Lemma~\ref{lem:conc} takes the form $Q(P,r)\le\kappa\,r^{2}/d$, so a disk of radius $r$ holds at
most $\kappa r^{2}n/d$ of the $n$ vertices ($\kappa$ is finite by the Esseen inequality, and
$\kappa\le2$ throughout the computed range $d\le6$). (For $r\to0$ this fails --- an atom carries
positive mass --- but only $r=\tfrac32$ is used below.) Then the Minkowski drawing satisfies
\[
  \udcr(H)\ \le\ c(n)\ \le\ \tfrac94\,\kappa\,n^{2}\log_3 n\ \bigl(\le\ \tfrac92\,n^{2}\log_3 n\bigr).
\]
\end{proposition}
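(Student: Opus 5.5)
We follow the counting scheme of Theorem~\ref{thm:udcr}, but keep every constant. The only new input is the quantitative concentration hypothesis $Q(P,r)\le\kappa r^{2}/d$ for $1\le r\le 2$. By the statement, this comes from Esseen's planar inequality at covariance $\Theta(d)I$, with the constant read off numerically for $d\le6$. So the work is a charging argument that uses a single disk of radius $r=\tfrac32$, together with a careful count of how often each crossing is charged.

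\textbf{Step 1 (localisation).} Let $e=xy$ be a unit edge and $m_e$ its midpoint. Every point of $e$ lies within $\tfrac12$ of $m_e$. Suppose an edge $f=zw$ crosses $e$ at a point $p$. Since $|zw|=1$, one endpoint of $f$, say $z$, satisfies $|z-p|\le\tfrac12$. Then $|z-m_e|\le 1$. The statement fixes $r=\tfrac32$, and I would take that as the working radius: it is a safe radius that also absorbs the boundary and open-versus-closed issues. So every edge crossing $e$ has an endpoint in the disk $D(m_e,\tfrac32)$. By hypothesis this disk holds at most $\kappa(\tfrac32)^{2}n/d=\tfrac94\kappa n/d$ vertices.

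\textbf{Step 2 (edge count per edge).} Each of these vertices has degree $2d$. So the number of edges that can cross $e$ is at most $\tfrac94\kappa(n/d)\cdot2d=\tfrac92\kappa n$.

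\textbf{Step 3 (summing, and the factor $\tfrac12$).} Sum over the $m=dn=n\log_3 n$ edges. Each crossing pair $\{e,f\}$ is counted twice, once from $e$ and once from $f$. This gives $c(n)\le\tfrac12\cdot n\log_3 n\cdot\tfrac92\kappa n=\tfrac94\kappa n^{2}\log_3 n$. Substituting $\kappa\le2$ gives the parenthetical bound $\tfrac92 n^{2}\log_3 n$. The inequality $\udcr(H)\le c(n)$ holds because the Minkowski drawing is a faithful unit drawing, by Proposition~\ref{prop:rec}.

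\textbf{Main obstacle.} The delicate step is not the counting but the concentration hypothesis itself. It requires a bound of the form $\kappa r^{2}/d$, uniform in the centre and valid at the fixed radius $r=\tfrac32$ rather than only asymptotically. Esseen's inequality gives $Q(P,r)\le C\,r^{2}/\det^{1/2}(\Sigma)$ for $r$ bounded below in terms of the increment scale, with $\Sigma$ the total covariance. I would first check that the generic twist has covariance exactly $\tfrac16 I$ per triangle, so that $\det^{1/2}\Sigma=d/6$. That makes $\kappa=6C$ finite, and the numerical $\kappa\le2$ for $d\le6$ is a separate empirical check. A second point to watch is that the crossing endpoint $z$ may itself be an endpoint of $e$. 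This cannot happen for a crossing of independent edges in a strict drawing, so the overcount only loosens the bound, which is harmless.
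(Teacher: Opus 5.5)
Your proof is correct and follows the paper's own argument. You place an endpoint of every edge crossing $e$ in the radius-$\tfrac32$ disk about $e$, multiply the $\tfrac94\kappa n/d$ vertex bound by the degree $2d$, and sum over the $dn$ edges with the factor $\tfrac12$ for double counting. Your localization (an endpoint within distance $1$ of the midpoint $m_e$) is slightly sharper than the paper's, so a radius just over $1$ would already improve the constant, but taking $r=\tfrac32$ as the statement prescribes reproduces the stated bound exactly.
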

\begin{proof}
Two unit segments cross only if each has an endpoint within distance $1$ of the other; the
distance-$1$ neighbourhood of a unit edge lies in a disk of radius $\tfrac32$, which holds at
most $\kappa(\tfrac32)^{2}n/d=\tfrac94\kappa\,n/d$ vertices. Each has degree $2d$, so a unit
edge is met by at most $\tfrac94\kappa\,\tfrac nd\cdot2d=\tfrac92\kappa\,n$ edges. Summing over
the $m=dn$ edges and halving, $c(n)\le\tfrac12(dn)\cdot\tfrac92\kappa n=\tfrac94\kappa\,n^{2}\log_3 n$.
\end{proof}

The origami drawing itself realizes the far smaller $c(n)\approx0.11\,n^{2}\log_3 n$ (verified
$d\le6$); the gap between $0.11$ and the explicit ceiling is the slack of the union count, not of
the order. The only remaining gap --- the central open problem of this half of the paper --- is
the matching \emph{lower} bound $\udcr(H)=\Omega(n^{2}\log n)$: it cannot come from the recursion
(which produces only drawings, hence upper bounds) nor from bisection ($\udcr$ is geometric, not a
graph invariant), and would require forcing \emph{every} faithful unit realization to
$\Omega(n^{2}\log n)$ crossings.

The same argument with the elementary Lemma~\ref{lem:concelem} in place of
Lemma~\ref{lem:conc} needs no limit theorem at all:

\begin{corollary}[Elementary bound]\label{cor:udcrelem}
Without invoking the central limit theorem, $\udcr(H)=O\!\bigl(n^{2}(\log n)^{3/2}\bigr)$.
\end{corollary}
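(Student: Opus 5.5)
The plan is to rerun the proof of Theorem~\ref{thm:udcr} (equivalently, of Proposition~\ref{prop:udcrexplicit}) with Lemma~\ref{lem:concelem} in place of Lemma~\ref{lem:conc}. That proof splits cleanly into a geometric counting step, which uses nothing probabilistic, and a single density input. Only the density input changes.

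\emph{Step 1 (geometric reduction, unchanged).} Take the Minkowski drawing at a generic twist; by Proposition~\ref{prop:mink} and the faithfulness discussion it is a strict faithful unit drawing, so $\udcr(H)\le c(n)$. Two unit segments cross only if each has an endpoint within distance $1$ of the other. Hence every edge crossing a fixed edge $e$ has an endpoint in the disk of radius $\tfrac32$ about the midpoint of $e$. Cover that disk by a bounded number of unit disks, or apply the strip argument directly with $\lambda=3$. The number of candidate crossing partners of $e$ is then at most $2d$ times the number of vertices in that disk.

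\emph{Step 2 (elementary density).} Apply Lemma~\ref{lem:concelem}. Project onto a generic direction $u$; each summand then takes three $\delta$-separated values. The Kolmogorov--Rogozin inequality then bounds the fraction of the $n$ points in any strip of width $3$ by $O(1/\sqrt d)$, with no appeal to a local or central limit theorem. A disk of radius $\tfrac32$ lies in such a strip, so it holds $O(n/\sqrt d)$ vertices.

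\emph{Step 3 (summation).} Each edge meets at most $O(n/\sqrt d)\cdot 2d=O(n\sqrt d)$ others. Summing over the $m=dn$ edges and halving gives
\[
c(n)\ \le\ \tfrac12\, d n\cdot O(n\sqrt d)\ =\ O(n^{2}d^{3/2}).
\]
Since $d=\log_3 n$, this is $\udcr(H)=O\bigl(n^{2}(\log n)^{3/2}\bigr)$.

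The only point needing care is Step 2's uniformity. The constant $\delta(u)$ must be bounded below independently of $d$, since otherwise the $O(1/\sqrt d)$ bound degrades. I would handle this first by choosing the twists from a finite set of angles, or by noting that the three projected values $u\cdot T_i[k]$ are pairwise separated by a fixed $\delta>0$ for all $i$ simultaneously whenever $u$ avoids the finitely many directions perpendicular to the triangle sides. With finitely many distinct orientations, a single generic $u$ and a uniform $\delta$ exist. If the twists are arbitrary generic reals, a compactness argument is needed instead: the fraction of indices $i$ with $\delta_i(u)\ge\delta_0$ is bounded below for a suitable $u$. This still yields $\sum_i(1-Q_i)=\Omega(d)$, which is all the Kolmogorov--Rogozin bound needs.
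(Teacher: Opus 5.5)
Your argument is the paper's proof: the same reduction to counting edges with an endpoint near a fixed edge (the paper uses the $2$-neighbourhood, you use the disk of radius $\tfrac32$, which changes nothing), Lemma~\ref{lem:concelem} in place of Lemma~\ref{lem:conc}, and the same summation $\tfrac12\,dn\cdot O(n\sqrt d)=O(n^{2}d^{3/2})$.

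Your uniformity caveat is a real point that the paper passes over: Lemma~\ref{lem:concelem} only produces some $\delta(u)>0$ for each fixed $d$. However, your first remedy does not work. Suppose the twists come from a finite set. Once $d$ exceeds its size, two twists are congruent modulo $60^\circ$, so $T_j=\pm T_i$ up to relabelling. Then $T_i[a]+T_j[b]=T_i[a']+T_j[b']$ for some $(a,b)\neq(a',b')$, so vertices coincide. The drawing is not faithful and bounds nothing about $\udcr$.

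Use your second remedy, which is really an averaging argument rather than a compactness one. The three gaps of $u\cdot T_i$, with $u=(\cos\varphi,\sin\varphi)$, are $\bigl|\sin(\theta_i-\varphi+60^\circ+120^\circ k)\bigr|$ for $k=0,1,2$. So for $\varphi$ uniform, each index satisfies $\delta_i(u)\ge\sin15^\circ>\tfrac14$ with probability $\tfrac12$, and some $u$ is good for at least $d/2$ indices. For those indices $Q(u\cdot T_i,\tfrac14)\le\tfrac13$. Kolmogorov--Rogozin with every $\lambda_i=\tfrac14$ then gives $\sum_i\lambda_i^{2}(1-Q_i)\ge d/48$, hence $Q(u\cdot P,3)=O(1/\sqrt d)$ uniformly in $d$. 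Alternatively, take the twists generic inside a fixed interval of length less than $60^\circ$; then a single $u$ gives a $\delta$ independent of $d$, and faithfulness survives because the ghost conditions are analytic and not identically satisfied.
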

\begin{proof}
By Lemma~\ref{lem:concelem} the $2$-neighbourhood of a unit edge holds $O(n/\sqrt d)$ vertices,
each of degree $2d$, so each edge crosses $O(n/\sqrt d)\cdot2d=O(n\sqrt d)$ others; summing over
the $m=dn$ edges, $\udcr(H)\le\tfrac12 m\,O(n\sqrt d)=O(n^{2}d^{3/2})=O(n^{2}(\log_3 n)^{3/2})$.
\end{proof}

Theorem~\ref{thm:udcr} is an order bound with an implicit constant. From the proved recursion
we can instead give an \emph{explicit closed-form majorant} --- at the cost of one extra
logarithm --- with every step verified.

\begin{theorem}[Closed-form majorant]\label{thm:udcrclosed}
Writing $L=\log_3 n$ (so $L=d$, the number of coordinates, since $n=3^{d}$; a different base
rescales $L$ by a constant and changes only the implied constant, not the order), the Minkowski
unit drawing satisfies the recursion $c(n)=3\,c(n/3)+g(n)$ with the per-level bound
$g(n)\le n^{2}L^{2}$, and therefore
\[
  \udcr(H)\ \le\ c(n)\ \le\ \tfrac32\,n^{2}\bigl(L^{2}-L+1\bigr)
  \ =\ \tfrac32\,n^{2}\bigl((\log_3 n)^{2}-\log_3 n+1\bigr)\ =\ O(n^{2}\log^{2}n).
\]
\end{theorem}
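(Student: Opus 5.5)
The plan is to combine the recursion \eqref{eq:rec}, already established in Proposition~\ref{prop:rec}, with a deliberately crude per-level bound, and then unroll the recursion exactly.

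\emph{Step 1 (the recursion).} I take $c(n)=3\,c(n/3)+g(n)$ with $g=X+Y+Z$ directly from Proposition~\ref{prop:rec}. It rests on the four-type partition \eqref{eq:decomp}, and on the fact that each of the three copies in the recursive drawing of \S\ref{ss:recdraw} is a translate of the smaller Minkowski drawing. The base case is $c(1)=0$, since a single vertex has no edges. Also $\udcr(H)\le c(n)$, because the Minkowski drawing is a faithful unit drawing.

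\emph{Step 2 (per-level bound).} I bound $g(n)$ by the total number of pairs of edges in $H(d,3)$. Every crossing counted in $g$ is a crossing between two distinct edges, so $g(n)\le\binom{m}{2}$. The graph is $2d$-regular on $n$ vertices, so $m=dn=nL$. Hence
\[
  g(n)\ \le\ \tfrac12\,n^{2}L^{2}\ \le\ n^{2}L^{2}.
\]
No geometry is needed here; that is exactly why this bound loses a logarithm compared with Theorem~\ref{thm:udcr}. At level $k$ the instance has $n/3^{k}$ vertices and $L-k$ coordinates, so the same bound reads $g(n/3^{k})\le (n/3^{k})^{2}(L-k)^{2}$.

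\emph{Step 3 (unrolling).} Iterating the recursion $L$ times down to $c(1)=0$ gives
\[
  c(n)=\sum_{k=0}^{L-1}3^{k}g(n/3^{k})\ \le\ n^{2}\sum_{k=0}^{L-1}3^{-k}(L-k)^{2}\ \le\ n^{2}\sum_{k\ge0}3^{-k}(L-k)^{2}.
\]
Extending the sum to all $k\ge0$ only adds nonnegative terms. Expanding $(L-k)^{2}=L^{2}-2Lk+k^{2}$, I use the standard series with $x=\tfrac13$:
\[
  \sum_{k\ge0}x^{k}=\tfrac32,\qquad \sum_{k\ge0}kx^{k}=\tfrac{x}{(1-x)^{2}}=\tfrac34,\qquad \sum_{k\ge0}k^{2}x^{k}=\tfrac{x(1+x)}{(1-x)^{3}}=\tfrac32.
\]
This yields
\[
  \sum_{k\ge0}3^{-k}(L-k)^{2}=\tfrac32L^{2}-\tfrac32L+\tfrac32=\tfrac32\bigl(L^{2}-L+1\bigr),
\]
which is exactly the stated majorant $c(n)\le\tfrac32\,n^{2}(L^{2}-L+1)=O(n^{2}\log^{2}n)$.

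The only points that need care are the following. First, each level's $g$ must be bounded with its own $L-k$, not with the top-level $L$; otherwise the constant comes out wrong. Second, the tail extension must be justified, which it is because every added term is nonnegative. Third, the case $k=L-1$ must be consistent with the bound: there $n/3^{k}=3$ and $g(3)=0\le 9$, so the inequality still holds. I expect no real obstacle beyond this bookkeeping, since the crude edge-pair bound makes Step~2 immediate.
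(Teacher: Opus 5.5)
Your proof is correct and follows the paper's route. Both proofs use the same recursion and bound $g$ by counting edge pairs: you take all $\binom{m}{2}$ pairs, where the paper bounds $X$, $Y$, $Z$ class by class. Your series $\sum_{k\ge0}3^{-k}(L-k)^{2}=\tfrac32(L^{2}-L+1)$ is exactly the paper's majorant $f(n)/n^{2}$, which the paper instead certifies by induction from the identity $f(n)-3f(n/3)=n^{2}L^{2}$.

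One remark: your Step~2 count $\binom{m}{2}\le\tfrac12 n^{2}L^{2}$ bounds the total $c(n)$ directly. Since $2L^{2}-3L+3>0$ for all $L$, that count already lies below $\tfrac32 n^{2}(L^{2}-L+1)$, so for this particular majorant the unrolling is not even needed.
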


\begin{proof}
The recursion is \eqref{eq:rec} (Proposition~\ref{prop:rec}): peeling one coordinate splits
the drawing into three translated copies (giving $3\,c(n/3)$ exactly) plus the bundle sums
$g(n)=X+Y+Z$. We bound the three classes by counting edge pairs, using that there are $n$
bundle edges and $m-n=n(L-1)$ copy edges:
\[
  Y\le 9\binom{n/3}{2}<\tfrac12 n^{2},\quad
  Z\le n\cdot n(L-1)=n^{2}(L-1),\quad
  X\le 3\Bigl(\tfrac{n(L-1)}{3}\Bigr)^{2}=\tfrac13 n^{2}(L-1)^{2}.
\]
Hence $g(n)\le n^{2}\bigl(\tfrac13(L-1)^{2}+(L-1)+\tfrac12\bigr)\le n^{2}L^{2}$ for $n\ge9$.
Now argue by induction on $d$ (with $n=3^{d}$, $L=d$). Put $f(n)=\tfrac32 n^{2}(L^{2}-L+1)$.
The base case $n=9$ holds, $c(9)=9\le f(9)=\tfrac{729}{2}$. The function $f$ absorbs the extremal
per-level term \emph{exactly}:
\[
  f(n)-3\,f(n/3)=\tfrac32 n^{2}(L^{2}-L+1)-3\cdot\tfrac32\Bigl(\tfrac n3\Bigr)^{2}\!\bigl((L{-}1)^{2}-(L{-}1)+1\bigr)=n^{2}L^{2}.
\]
Assuming $c(n/3)\le f(n/3)$, the recursion and $g(n)\le n^{2}L^{2}$ give
$c(n)=3\,c(n/3)+g(n)\le 3\,f(n/3)+n^{2}L^{2}=f(n)$. Since $\udcr(H)\le c(n)$, the bound follows.
\end{proof}

So both bounds are proved: the sharper \emph{order} $O(n^{2}\log n)$ (Theorem~\ref{thm:udcr},
by concentration) and the explicit \emph{closed form}
$\tfrac32 n^{2}((\log_3 n)^{2}-\log_3 n+1)$ (Theorem~\ref{thm:udcrclosed}, by the recursion).

\begin{conjecture}\label{conj:cr}
For the Hamming triangle family on $n$ vertices, $\udcr(H)=\Theta(n^{2}\log n)$; equivalently
$\udcr(H)=\Omega(n^{2}\log n)$, matching Theorem~\ref{thm:udcr}. With Theorem~\ref{thm:tight}
the unit constraint then inflates the crossing number by a factor $\Theta(\log n)$:
$\ \udcr(H)/\cro(H)\to\infty$.
\end{conjecture}

\subsection{Small cases and the values for $d\le6$}\label{ss:smallcases}
The self-similar structure is best seen in the small cases. Figure~\ref{fig:gallery} draws the
faithful unit realizations of $H(3,3)$, $H(4,3)$ and $H(5,3)$ at the origami base: each is three
shrunken copies of the previous one plus a coordinate bundle, exactly the recursion
\eqref{eq:rec}. Table~\ref{tab:values} collects the bounds; Figure~\ref{fig:values} plots them.

\begin{figure}[t]\centering
  \includegraphics[width=\linewidth]{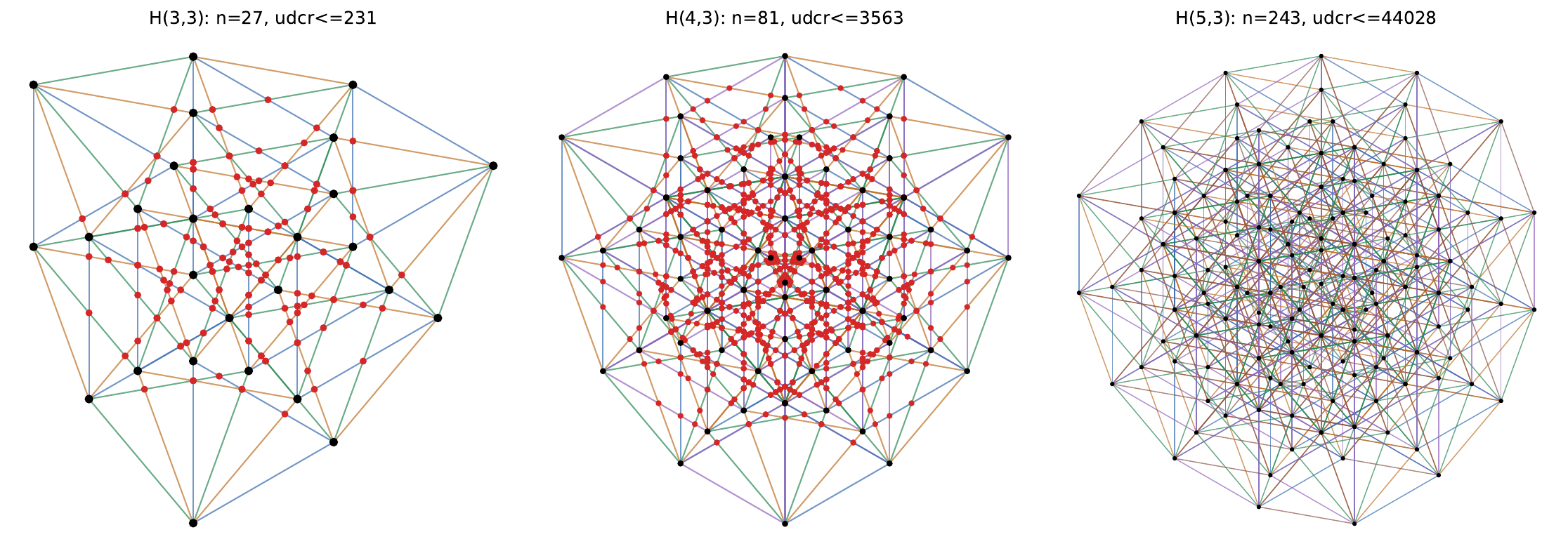}
  \caption{The origami unit realizations of $H(3,3),H(4,3),H(5,3)$ ($n=27,81,243$); edges are
  coloured by the coordinate they change, and each drawing is three translated copies of the
  one before it plus a bundle. Crossings marked for $d=3,4$.}\label{fig:gallery}
\end{figure}

\begin{table}[t]\centering
\begin{tabular}{c|c|c|c|c|c}
$d$ & $n=3^d$ & edges $dn$ & $\cro\ge n/3$ & $\cro\le\tfrac12 n(n-3)$ & $\udcr\le c(n)$\\\hline
$2$ & $9$   & $18$   & $3^{\ast}$ & $27$    & $9^{\ast}$\\
$3$ & $27$  & $81$   & $9$        & $324$   & $231$\\
$4$ & $81$  & $324$  & $27$       & $3159$  & $3546$\\
$5$ & $243$ & $1215$  & $81$       & $29160$ & $44028$\\
$6$ & $729$ & $4374$ & $243$      & $264627$& $490935$
\end{tabular}
\caption{Bounds for $H(d,3)$. Starred: $\cro(K_3\bx K_3)=3$ (exact) and $\udcr(K_3\bx K_3)\le9$
(the Minkowski value, conjectured exact). For $d\ge3$ the last column is a faithful good-drawing count --- an upper bound for
$\udcr$, \emph{not} an exact value (the origami enneagon twist for $d=3$; a constructible
drawing for $d\ge4$, where a generic drawing can be slightly smaller, e.g.\ $3486$ for
$d=4$, \S\ref{ss:mintier}) --- and the $\cro$ column is the separated closed form
$\tfrac12 n(n-3)$.}
\label{tab:values}
\end{table}

\begin{figure}[t]\centering
  \includegraphics[width=.89\linewidth]{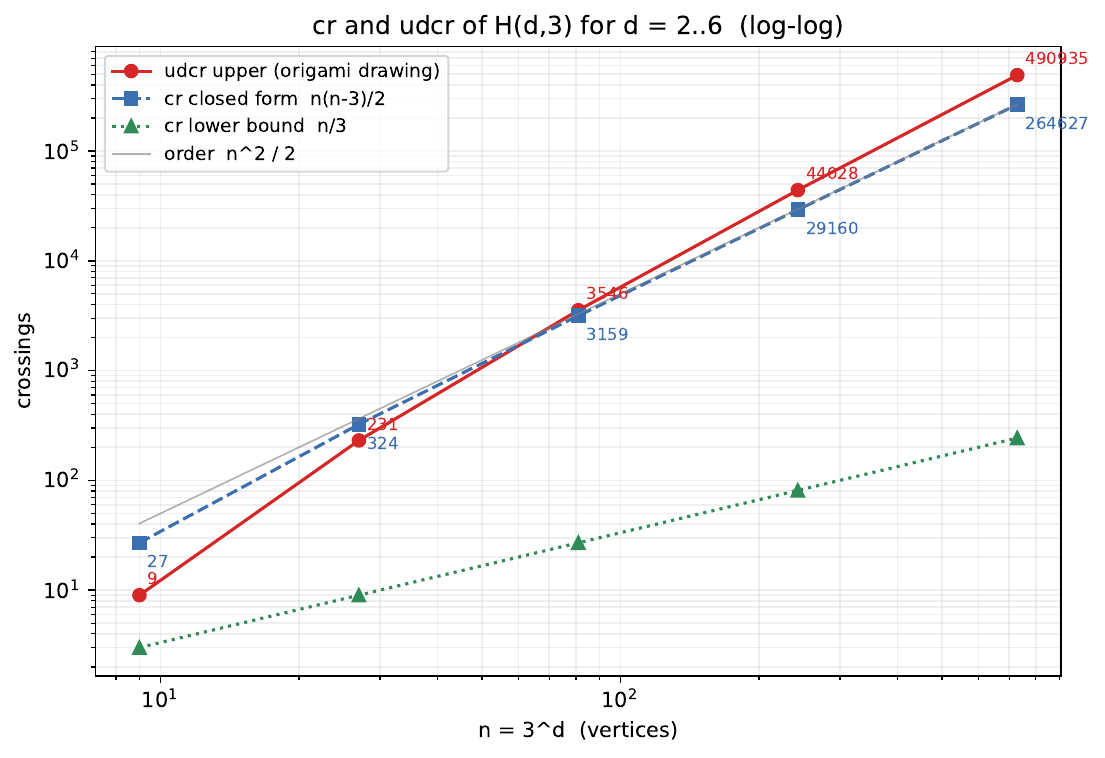}
  \caption{The bounds of Table~\ref{tab:values} on a log--log scale: the $\cro$ closed form
  $\tfrac12 n(n-3)$, its lower bound $n/3$, and the origami $\udcr$ drawing, all near the
  quadratic guide $\tfrac12 n^2$.}\label{fig:values}
\end{figure}

\emph{On exact values.} Only $\cro(K_3\bx K_3)=3$ is settled exactly; $\udcr(K_3\bx K_3)\le9$ is
the Minkowski value, conjectured exact but without a matching lower bound over all faithful
realizations. For $d\ge3$ the table gives \emph{bounds}: the lower bound $n/3$
(Theorem~\ref{thm:lower}), the ordinary upper bound $\tfrac12 n(n-3)$ (separated drawing,
verified $n\le729$), and the unit upper bound from the origami drawing. Determining the exact
$\cro$ or $\udcr$ for any single $d\ge3$ --- for instance whether $\cro(H(3,3))$ equals its
best drawing $189$, or $\udcr(H(3,3))$ equals $231$ --- is open, the same barrier met for the
Robertson cage and the hypercube: a structural lower bound beating $\Omega(n)$ (for $\cro$) or
$\Omega(n^2)$ (for $\udcr$) is what is missing.

\emph{Complexity, made explicit.} The recurrence $c(n)=3\,c(n/3)+g(n)$ is solved by the master
theorem with $a=3$ branches, $b=3$ size reduction, and critical exponent $\log_b a=1$, i.e.\
$n^{\log_b a}=n$. The regime is set by $g$: for the separated drawing $g(n)=O(n^2)$ dominates
$n$, so the top level rules and $\cro=\Theta(n^2)$; for the unit drawing $g(n)=O(n^2\log n)$,
giving $\udcr=O(n^2\log n)$. Explicitly, unrolling the sum,
\[
  c(n)=\sum_{k\ge0}3^{k}\,g(n/3^{k})
      =\begin{cases}\Theta(n^2), & g(m)=\Theta(m^2),\\[2pt]
                    \Theta(n^2\log n), & g(m)=\Theta(m^2\log m),\end{cases}
\]
since $\sum_k 3^k(n/3^k)^2=n^2\sum_k 3^{-k}=\tfrac32 n^2$ in the first case, and the extra
$\log(n/3^k)$ inside the sum contributes the single $\log n$ in the second. In vertices, both
are polynomial: with $m=dn=\Theta(n\log n)$ edges, $\cro=\Theta(n^2)$ is far below the maximum
$\Theta(n^4)$ of $K_n$, and the recognition problem behind these graphs is $\exists\R$-complete
even though the counts here are computed in closed polynomial form.

\paragraph{Checking the recursion on the data.} The computed good-drawing sequence
$c(n)=9,231,3546,44028,490935$ ($d=2,\dots,6$) confirms the recursion $c(n)=3\,c(n/3)+g(n)$
directly: the intra-copy term $3\,c(n/3)$ is exact by construction, and the bundle remainder
$g(n)=c(n)-3\,c(n/3)$ follows the predicted $\Theta(n^{2}\log n)$ law,
\[
\renewcommand{\arraystretch}{1.15}
\begin{array}{r|cccc}
d & 3 & 4 & 5 & 6\\\hline
3\,c(n/3) & 27 & 693 & 10638 & 132084\\
g(n) & 204 & 2853 & 33390 & 358851\\
g(n)/(n^{2}\log_3 n) & 0.093 & 0.109 & 0.113 & 0.113
\end{array}
\]
so $g(n)\approx0.11\,n^{2}\log_3 n$ settles to a constant times $n^{2}\log n$ --- exactly the
per-level growth that Theorem~\ref{thm:udcrclosed} needs, with the $H(5,3)$ point ($c=44028$,
$g=33390$) squarely on the line.

\paragraph{$H(5,3)$ in detail.} As a worked instance beyond the base case, take $H(5,3)$
($n=243$, $1215$ edges). A short search over the flex and over the separated-drawing parameters
sharpens the tabulated bounds to
\[
  81\ =\ \tfrac n3\ \le\ \cro(H(5,3))\ \le\ 27222,\qquad
  \Omega(n^{2})\ \le\ \udcr(H(5,3))\ \le\ 42876 ,
\]
the ordinary upper bound from an optimised separated drawing (below the closed form
$\tfrac12 n(n-3)=29160$) and the unit one from the best twist found (below the origami value
$44028$). Figure~\ref{fig:h53best} shows that best unit realization. The exact values remain
open, as for every $d\ge3$.

\begin{figure}[t]\centering
  \includegraphics[width=.62\linewidth]{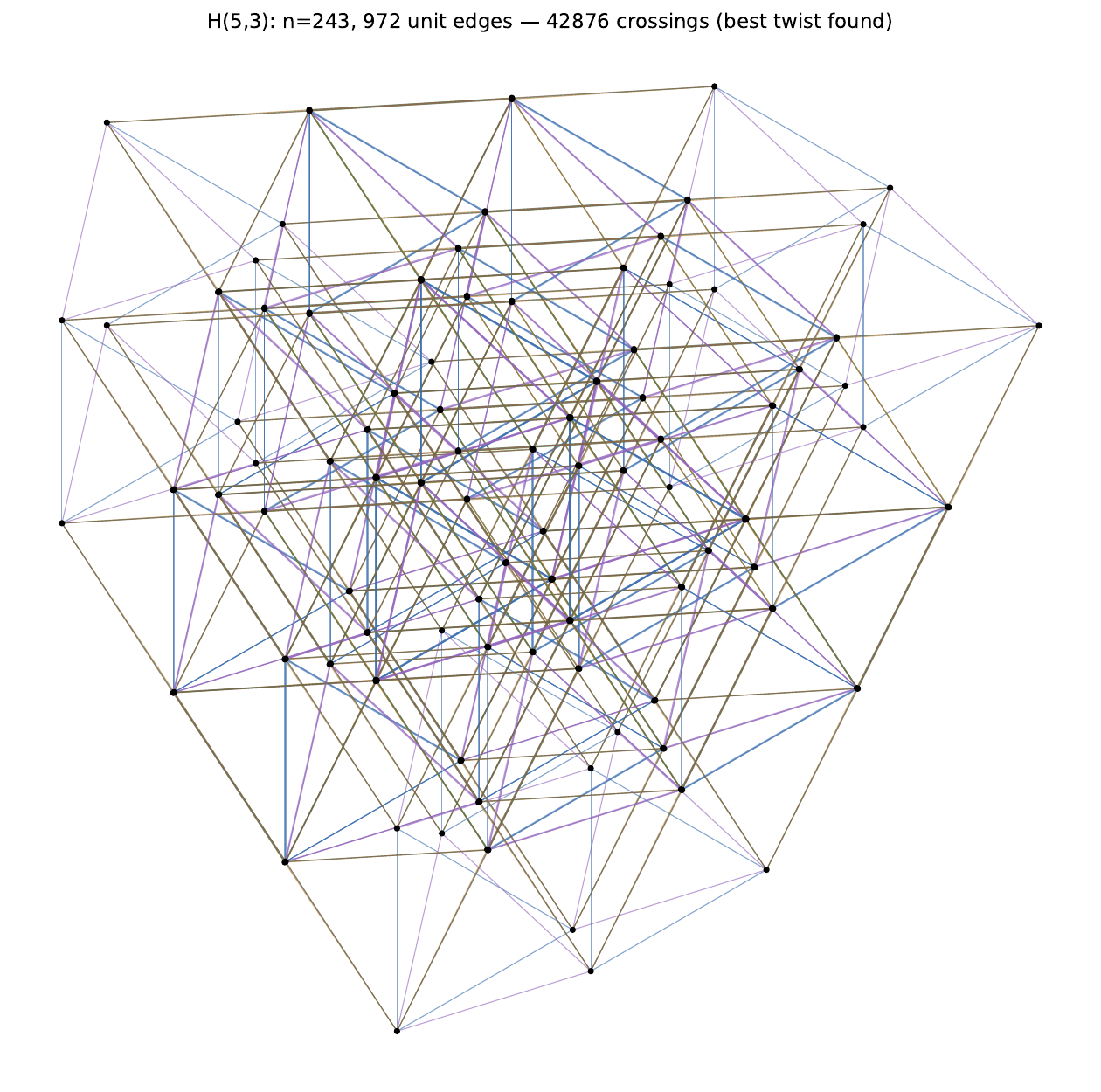}
  \caption{The best faithful unit realization of $H(5,3)$ found here ($243$ vertices, $1215$
  unit edges), with $42876$ crossings --- the current upper bound on $\udcr(H(5,3))$, edges
  coloured by the coordinate they change.}\label{fig:h53best}
\end{figure}

\subsection{Constructible configurations are near-optimal}\label{ss:mintier}
The value $\udcr(H)$ is a minimum of the crossing count over the $(d-1)$-parameter flex, and
it is natural to ask where in that flex --- at a generic (transcendental) point, or at one of
the constructible points of the algebraic web of Sections~3--4 --- the minimum sits. The
evidence is $d$-dependent.

For $H(3,3)$ (two-parameter flex) the lowest crossing count we find is at the \emph{origami}
configuration, the enneagon twist by $20^\circ=60^\circ/3$ in the trisection field
$\Q(\sin20^\circ)$; robust counts (median over general-position perturbations) give
\[
  \underbrace{231}_{\text{origami }20^\circ}\ <\
  \underbrace{237}_{\text{compass, transcendental}}\ <\
  \underbrace{243}_{\text{equal spacing }40^\circ},
\]
so among the drawings tried the lowest count is the constructible (enneagon) one, degree $3$
over $\Q$, while equal spacing --- the ``obvious'' symmetry --- is near the maximum. (Whether
$231$ is the true minimum $\udcr(H(3,3))$ remains open, \S\ref{ss:smallcases}; this is numerical
evidence, not a proof of optimality.) For $H(4,3)$
(three-parameter flex) we do \emph{not} know the exact minimum; what follows is only a
comparison of \emph{distinct} faithful drawings of the same graph --- each an upper bound on
$\udcr(H(4,3))$, not an exact value --- at four representative points of the flex plus the best
drawing a random search found:
\[
  \underbrace{3486}_{\text{best found (generic)}}\ <\
  \underbrace{3546}_{\text{compass }15^\circ\!,30^\circ\!,45^\circ}\ <\
  \underbrace{3558}_{\text{origami}}\ <\
  \underbrace{3582}_{\text{transcendental}}\ <\
  \underbrace{3594}_{\text{equal spacing}}.
\]
On these drawings compass and origami still beat both a generic transcendental point and the
symmetric one (by about $1\%$), but a random search finds a slightly lower generic drawing, so
here the constructible points are \emph{near}-optimal rather than exactly optimal.

\begin{conjecture}\label{conj:mintier}
Constructible (compass or origami) realizations of $H(d,3)$ are near-optimal for $\udcr$,
strictly better than the equal-spacing symmetric realization and than a generic transcendental
point. For $H(3,3)$ the minimum is attained \emph{exactly} at the origami enneagon twist of
$\Q(\sin20^\circ)$; whether the exact minimiser stays constructible for $d\ge4$ --- where the
best point found is generic --- is open.
\end{conjecture}

The two halves of the paper describe one object. The first half studies \emph{where} the
faithful realizations of $H(d,3)$ live --- a $(d-1)$-parameter flex threaded by the
compass/origami/exotic algebraic web (Sections~3--4). The second half asks \emph{which}
realization to draw, and the answer is a point of that same web: the crossing count is a
function on the flex, and the lowest value we find (for $H(3,3)$) is at the origami
enneagon twist, the $\Q(\sin20^\circ)$ realization that Section~\ref{ss:mintier} adopts as the
reference drawing (its global optimality left open). The constructibility tier is thus the location of the optimum. The
flexibility that lets $H(3,3)$ range over the tiers is the same $(d-1)$-parameter freedom over
which $\udcr$ is minimised, and the spatial realizations
(Figures~\ref{fig:space3d},~\ref{fig:h43space}) are the crossing-free lifts that the plane
forbids; rigidity/arithmetic and crossing combinatorics are read off the same flex. The total-crossing invariant $\udcr$ complements the recent
\emph{per-edge} ($k$-planar) theory of unit-distance graphs of Geh\'er, P\'alv\"olgyi, Simon
and T\'oth~\cite{GPST}; and the lower bounds above are of crossing-lemma type, in the lineage
of Sz\'ekely's method~\cite{Szekely} and its best constants~\cite{Ackerman,BuengenerKaufmann}.

\section{Discussion}
The contrast is structural: a \emph{rigid} unit-distance graph pins its
coordinate field (an isolated realization), whereas a \emph{flexible} one such
as $H(3,3)$ has a positive-dimensional realization variety and is realizable
across the whole constructibility hierarchy --- the same $81$ edges over
different coordinate fields --- and the same is true in space: the
spatial realizations are compass ($\Q(\sqrt3)$ from the coordinate planes,
$\Q(\sqrt2,\sqrt3)$ from the faithful tetrahedron), but a $20^\circ$ or
$\tfrac{2\pi}{11}$ turn of one triangle reaches the origami and exotic tiers in
$\R^3$ too. By contrast, among the
uniform polyhedra the coordinate fields climb only as far as origami --- the
icosahedron and dodecahedron give $\Q(\sqrt5)$ (degree $2$), the snub cube the
tribonacci cubic $t^3-t^2-t-1$ (degree $3$), and the snub dodecahedron a
degree-$6$ field --- so no single Archimedean solid reaches the exotic tier; it
is the flexibility of $H(3,3)$, not a rigid solid, that gets there.

Two points stand out. First, $H(3,3)$ is generically rigid, yet its
unit-distance realizations all sit on a two-dimensional flex; that flex is what lets
one graph occupy compass, origami, and exotic fields at once. Second, the two
questions one asks of a realization are of different kinds:
\emph{faithfulness} is an open geometric condition, certified over $\R$ with no
algebraic extension, while the \emph{constructibility tier} is an
arithmetic invariant of the coordinate field, certified only inside that extension.
Almost every faithful realization is transcendental and has no tier at all; the
compass/origami/exotic trichotomy is carried by a dense, measure-zero algebraic web
inside the predominantly transcendental faithful family. The unit-distance
geometry and the Galois arithmetic are two independent certificates of the same
graph.

\paragraph{Relation to complexity, and to the colouring literature.}
Deciding whether an abstract graph is a unit-distance graph in $\R^2$ is
$\exists\R$-complete~\cite{Schaefer}, where $\mathrm{NP}\subseteq\exists\R\subseteq
\mathrm{PSPACE}$ (the upper bound is Canny's~\cite{Canny}). Recognition is thus
NP-hard and is not expected to lie in NP, since $\exists\R$ is believed to properly
contain NP --- an inclusion that remains \emph{open}. In particular, whether
recognition fails to be NP-complete is equivalent to the open separation
$\exists\R\neq\mathrm{NP}$: it is NP-hard unconditionally, but ``not NP-complete''
is an expectation, not a theorem. $H(3,3)$ is not a hard
instance; we solve it explicitly. Its value is as a clean window on \emph{why} the
general problem lands in $\exists\R$ rather than NP: the realization space is a
positive-dimensional \emph{real semialgebraic} set whose generic point is
\emph{transcendental}. Existence is always witnessed algebraically --- a nonempty
semialgebraic set defined over $\Q$ carries a real-algebraic point --- and that
algebraic locus is exactly where the constructibility tiers live (Proposition~\ref{prop:faithful-transc});
the surrounding transcendental continuum is the properly $\exists\R$ territory that
no coordinate-field reading can see. Consistently, constraining the coordinates to
be \emph{integers} lowers recognition back to NP-complete~\cite{IntegerUDG}: the
jump from NP to $\exists\R$ is precisely the passage from a discrete/algebraic
search to a real, transcendental one. None of this bears on the colouring theory of
unit-distance graphs --- the Hadwiger--Nelson problem and the chromatic number of
the plane~\cite{Soifer} --- which is a combinatorial invariant independent of the
realizing field: the transcendental abundance of realizations and the chromatic
number are orthogonal facts about the same graph. Our results neither contradict
these frameworks nor are contradicted by them; they \emph{delimit} the
coordinate-field (Galois) reading to the algebraic locus and place the rest inside
the $\exists\R$ landscape of Schaefer.

\paragraph{Reproducibility.} All realizations, edge counts, ghost counts, degree
and irreducibility statements, and every figure in this note are produced and
verified in exact arithmetic by the accompanying program \texttt{generate\_h33.py}
(SymPy for exact algebra, NumPy/Matplotlib for the figures).

\paragraph{Acknowledgments.} This work grew out of a question of Edward Pegg Jr., who, noting
that the Hamming graph $H(3,3)$ is \emph{not} rigid, asked whether the new unit-distance
constructions are rigid --- and suggested the answer would depend on the coordinate fields. That
is exactly the thread followed here: the unit realizations are flexible, and the field they live
in varies along that flex. The author thanks him for the question. This study was financed in
part by the Coordena\c c\~ao de Aperfei\c coamento de Pessoal de N\'ivel Superior --- Brasil
(CAPES) --- Finance Code 001, through a scholarship held in the graduate program of the
Universidade do Estado do Rio de Janeiro (UERJ).

\section{Future work}
The family $H(d,3)$ leaves a clean programme of concrete questions for future work.
\begin{enumerate}[label=\textup{(P\arabic*)}]
\item \emph{The unit lower bound.} The ordinary crossing number is settled,
$\cro(H)=\Theta(n^{2})$ (Theorems~\ref{thm:upper},~\ref{thm:tight}), and the unit count is
now bracketed $\Omega(n^{2})\le\udcr(H)\le O(n^{2}\log n)$ (Theorem~\ref{thm:udcr}), the upper
bound matching the Minkowski count's order. Only one step remains for
$\udcr(H)=\Theta(n^{2}\log n)$ and the tight penalty $\udcr/\cro=\Theta(\log n)$: a matching
lower bound $\udcr(H)=\Omega(n^{2}\log n)$, i.e.\ that \emph{every} faithful unit realization
is forced into $\Omega(n^{2}\log n)$ crossings. This cannot come from bisection, since $\udcr$
is geometric rather than a graph invariant; it is the one open point.
\item \emph{The explicit constant.} Theorem~\ref{thm:upper} gives the sharp one-page value
$\cro(H)\le\tfrac76 n^{2}$. A \emph{separated} (non-book) drawing does better numerically,
approaching $\tfrac12 n(n-3)=\binom{n}{2}-n$ (verified for $n\le729$), but this is a knife-edge:
proving it by the recursion needs a uniform per-level bundle bound $B(n)\le n^{2}/3$, and the
computed bundle term sits right at that threshold (its ratio to $n^2$ climbs through $1/3$ near
$d=6$), so a generic separated drawing already exceeds $\tfrac12 n(n-3)$. Whether some drawing
attains $\tfrac12 n(n-3)$ for all $n$ --- equivalently, whether $\cro(H)$ has leading constant
$\tfrac12$ rather than the $\tfrac76$ proved here --- is open; it will not come from this
recursion alone.
\item \emph{The order of $g(n)$.} The recursion $c(n)=3\,c(n/3)+g(n)$ holds for each
drawing; a proof that the bundle sum $g(n)=\Theta(n^{2}\log n)$ (equivalently that the
copy--copy and copy--bundle sums $X,Z$ each carry one logarithmic factor) would settle the
$\udcr$ order. Constant factors are irrelevant here; only the exponent and the $\log$ matter.
\item \emph{Minimising over the flex.} $\udcr(H)$ is the minimum of the crossing count over
the flex and all faithful realizations; is the minimum attained at a symmetric twist, and
does its order equal that of the Minkowski count $c(n)$?
\item \emph{Transcendence measure.} On the faithful continuum of $H(d,3)$, quantify the
measure of the algebraic locus (the ``tiers''); we know it is dense and of measure zero,
but its Hausdorff dimension as a function of $d$ is open.
\item \emph{Higher $q$.} For $q\ge4$, $\operatorname{edim}(H(d,q))=q-1\ge3$: study the
\emph{spatial} unit-distance crossing number and the analogous recursion in $\R^{q-1}$.
\end{enumerate}

\appendix
\section{Reproducibility: algorithms}
Every numerical claim in this paper is produced by short, self-contained procedures. We
record them in pseudocode; the running implementations (Python/NumPy) and all vector
figures are available from the author.

\paragraph{A.1 The Minkowski realization and faithfulness.}
\begin{verbatim}
input  d, twist angles theta[0..d-1]
T[i][k] = R*( cos(theta[i]+2*pi*k/3), sin(theta[i]+2*pi*k/3) )
          for k=0,1,2   # unit triangle i, R=1/sqrt(3)
for each vertex v in {0,1,2}^d:
    P[v] = sum_i T[i][ v[i] ]                      # Minkowski sum
assert |P[u]-P[w]| = 1 for every edge (one coord differs)   # Prop 2
faithful = ( all vertices distinct  and
             no non-adjacent pair has |P[u]-P[w]|=1 )   # ghost
\end{verbatim}

\paragraph{A.2 The flex dimension.}
\begin{verbatim}
build rigidity matrix M (rows = edges, columns = 2*|V|):
    row(u,w): +[P[u]-P[w]] at u, -[P[u]-P[w]] at w
rank = numeric rank(M)
flex = (2*|V| - rank) - 3          # subtract the 3 rigid motions
# returns d-1 for H(d,3): Proposition (flex)
\end{verbatim}

\paragraph{A.3 The four-type crossing decomposition.}
\begin{verbatim}
for each unordered pair of independent edges e, f:
    if segments P(e), P(f) cross:
        classify e,f as copy/bundle (bundle = changes last coordinate)
        if both copy, same copy k :  intra += 1
        if both copy, different k  :  X += 1      # copy-copy
        if both bundle             :  Y += 1      # bundle-bundle
        else                       :  Z += 1      # copy-bundle
assert intra == 3*c(d-1)  and  c(d) == intra + X + Y + Z    # eq. (decomp)
\end{verbatim}

\paragraph{A.4 The recursive upper bounds.}
The unit count $c(d)$ (Table in \S\ref{ss:decomp}) is obtained by A.3 on the Minkowski
drawing; the ordinary bound $c_R(d)$ by the same crossing count on the separated
drawing (three shrunken disjoint copies at the corners of a triangle, radial bundles),
minimised over the layout parameters $(\rho,\sigma,\varphi)$. The lower bound
$3^{d-1}=n/3$ needs no computation (vertex-disjoint copies).

\end{document}